\DeclareSymbolFont{cyrletters}{OT2}{wncyr}{m}{n}
\DeclareMathSymbol{\sha}{\mathalpha}{cyrletters}{"58}
 \newtheorem{thm}{Théorème}[section]
 \newtheorem*{thmA}{Théorème A}
 \newtheorem*{thmB}{Théorème B}
 \newtheorem*{thmC}{Théorème C}
 \newtheorem*{corD}{Corollaire D}
 \newtheorem{cor}[thm]{Corollaire}
 \newtheorem{lem}[thm]{Lemme}
 \newtheorem{prop}[thm]{Proposition}
 \theoremstyle{definition}
 \theoremstyle{remark}
 \theoremstyle{remark}
 \newtheorem{rem}[thm]{Remarque}
 \numberwithin{equation}{subsection}
 \newcommand{\To}{\longrightarrow}
 \renewcommand{\P}{\mathbb{P}}
 \newcommand{\Q}{\mathbb{Q}}
 \newcommand{\Z}{\mathbb{Z}}
\begin{document}

\title[Principe local-global pour les zéro-cycles]
 {Principe local-global pour les zéro-cycles sur certaines fibrations au-dessus de l'espace projectif}

\author{ Yongqi LIANG  }

\address{Yongqi LIANG \newline
Département de Mathématiques,  Bâtiment 425, Université  Paris-sud 11,  F-91405 Orsay, France}

\email{yongqi.liang@math.u-psud.fr}

\thanks{\textit{Mots clés} : zéro-cycle de degré $1$, principe de Hasse, approximation faible,
obstruction de Brauer-Manin, fibré en variétés de Severi-Brauer}

\thanks{\textit{Classification AMS} : 14G25 (11G35, 14D10)}

\date{\today}



\maketitle

\small
\textsc{Résumé.}
On étudie le principe local-global pour les zéro-cycles de degré $1$
sur certaines variétés définies sur les corps de nombres et fibrées au-dessus de l'espace projectif.

Parmi d'autres applications, on complète la preuve de l'assertion: l'obstruction de
Brauer-Manin est la seule au principe de Hasse et à
l'approximation faible pour les zéro-cycles de degré $1$ sur les
fibrés au-dessus de l'espace projectif en variétés de Severi-Brauer ou en surfaces de Châtelet.
\bigskip

\scriptsize
\begin{center}\textbf{LOCAL-GLOBAL PRINCIPLE FOR ZERO-CYCLES ON CERTAIN FIBRATIONS OVER THE PROJECTIVE SPACE}\end{center}
\small

\textsc{Abstract.} We study the local-global principle for zero-cycles of degree $1$ on certain varieties
defined over number fields and fibered over the projective space.

Among other applications, we complete the proof of the statement: the Brauer-Manin obstruction
is the only obstruction to the Hasse principle and
weak approximation
for zero-cycles of degree $1$
on Severi-Brauer variety bundles or Châtelet-surface bundles over the projective space.
\normalsize

\tableofcontents

\section*{Introduction}
Soit $X$ une variété projective lisse géométriquement intègre sur un corps de nombres
$k.$ Le principe de Hasse et l'approximation faible pour les points rationnels sur une telle
variété ont été considérés depuis longtemps.
L'obstruction de Brauer-Manin au principe de Hasse (resp. à l'approximation faible)
pour les points rationnels a été introduite par Yu. I. Manin dans son exposé \cite{Manin}
(resp. par Colliot-Thélène/Sansuc dans \cite{CTSansuc77-3}).
Parallèlement, pour les zéro-cycles, l'obstruction de Brauer-Manin est également définie
dans l'article de Colliot-Thélène \cite{CT95}.
On se demande si l'obstruction de Brauer-Manin est la seule obstruction au principe
de Hasse (resp. à l'approximation faible) pour les zéro-cycles de degré $1,$
voir \cite{CT99HP0-cyc} pour quelques conjectures explicites par Colliot-Thélène.
Mentionnons deux aspects des résultats directement liés à ce travail.

\begin{itemize}
\item[-]
Des résultats sur l'obstruction de Brauer-Manin pour les zéro-cycles ont été obtenus par
Colliot-Thélène, Eriksson, Saito, et Scharaschkin, dans
\cite{Saito}, \cite{CT99HP0-cyc}, \cite{Eriksson}, pour une courbe;
et par Colliot-Thélène, Frossard, Salberger, Skorobogatov, Swinnerton-Dyer, van Hamel, Wittenberg,
et l'auteur, dans \cite{Salberger}, \cite{CT-SD}, \cite{CT-Sk-SD}, \cite{CT99}, \cite{Frossard}, \cite{vanHamel},
\cite{Wittenberg}, \cite{Liang1}, pour certaines fibrations au-dessus d'une courbe,
voir l'introduction de \cite{Liang1} pour plus d'informations.

\item[-]
D'autre part, autour du problème parallèle de l'obstruction de Brauer-Manin pour les points rationnels
sur une fibration à fibres géométriquement intègres au-dessus de $\mathbb{P}^n,$
les meilleurs résultats généraux sont dus à Harari dans sa série d'articles
\cite{Harari}, \cite{Harari2}, et \cite{Harari3}.
Il impose une hypothèse arithmétique moins forte sur les fibres, à savoir que
l'obstruction de Brauer-Manin est la seule sur les fibres d'un sous-ensemble hilbertien.
Dans \cite{WittenbergLNM}, la même question pour les fibrations en variétés
de Severi-Brauer au-dessus de $\mathbb{P}^n,$ où des fibres géométriquement non intègres sont
permises, est discutée par Wittenberg en admettant l'hypothèse
de Schinzel.
\end{itemize}

Le but de ce travail est d'établir l'assertion que \og l'obstruction de Brauer-Manin est la seule au principe de
Hasse/à l'approximation faible pour les zéro-cycles de degré $1$\fg\mbox{ }pour certaines fibrations
au-dessus de $\mathbb{P}^n.$ Les résultats principaux sont
les suivants, où $X_{\bar{\eta}}=X_\eta\times_{k(\mathbb{P}^n)}\overline{k(\mathbb{P}^n)}$
est la fibre générique géométrique.

\begin{thmA}[Théorèmes \ref{thm10}, \ref{Pn-codim2}]
Soit $k$ un corps de nombres.
Soit $X\to\mathbb{P}^n$ un $k$-morphisme dominant à fibre générique géométriquement intègre
tel que $Br(X_{\bar{\eta}})$ soit fini et $Pic(X_{\bar{\eta}})$ soit sans torsion.

Supposons que

- toutes les fibres sont géométriquement intègres,

- pour tout point fermé $\theta$ dans un certain ouvert non vide de $\mathbb{P}^n,$
l'obstruction de Brauer-Manin est la seule au principe de Hasse/à l'approximation faible
pour les points rationnels (ou pour les zéro-cycles de degré $1$)
sur la fibre $X_\theta.$

Alors, l'obstruction de Brauer-Manin est la seule au principe de Hasse/à l'approximation faible
pour les zéro-cycles de degré $1$ sur $X.$
\end{thmA}

\begin{thmB}[Théorèmes \ref{thm15}, \ref{Pn-section}]
Soit $k$ un corps de nombres.
Soit $X\to\mathbb{P}^n$ un $k$-morphisme dominant à fibre générique géométriquement intègre
tel que $Br(X_{\bar{\eta}})$ soit fini et $Pic(X_{\bar{\eta}})$ soit sans torsion.

Supposons que

- la fibre générique ${X_\eta}_{/k(\mathbb{P}^n)}$ admet un zéro-cycle de degré $1,$

- pour tout point fermé $\theta$ dans un certain ouvert non vide de $\mathbb{P}^n,$
l'obstruction de Brauer-Manin est la seule à l'approximation faible
pour les zéro-cycles de degré $1$ sur la fibre $X_\theta.$

Alors, l'obstruction de Brauer-Manin est la seule à l'approximation faible
pour les zéro-cycles de degré $1$ sur $X.$
\end{thmB}

\begin{thmC}[Théorème \ref{Pn-codim1}]
Soit $k$ un corps de nombres.
Soit $X\to\mathbb{P}^n$ un $k$-morphisme dominant à fibre générique géométriquement intègre
tel que $Br(X_{\bar{\eta}})$ soit fini et $Pic(X_{\bar{\eta}})$ soit sans torsion.

Supposons que les conditions suivantes soient satisfaites:

- \emph{\textsc{(Abélienne-Scindée)}} pour tout point $\theta\in\mathbb{P}^n$ de codimension $1,$
il existe une composante irréductible $Y$ de la fibre $X_\theta$
de multiplicité $1$ telle que la fermeture algébrique de $k(\theta)$ dans
le corps de fonctions de $Y$ est une extension abélienne de $k(\theta),$

- pour tout point fermé $\theta$ dans un certain ouvert non vide de $\mathbb{P}^n,$
la fibre $X_\theta$ satisfait le principe de Hasse/l'approximation faible
pour les points rationnels (ou pour les zéro-cycles de degré $1$).

Alors, l'obstruction de Brauer-Manin est la seule au principe de Hasse/à l'approximation faible
pour les zéro-cycles de degré $1$ sur $X.$
\end{thmC}

L'hypothèse ``$Br(X_{\bar{\eta}})$ est fini et $Pic(X_{\bar{\eta}})$ est sans torsion''
est équivalente à l'hypothèse ``$H^i(X_{\bar{\eta}},\mathcal{O}_{X_{\bar{\eta}}})=0$
pour $i=1,2$ et $NS(X_{\bar{\eta}})$ est sans torsion'', qui est vérifiée
si $X_\eta$ est supposée rationnellement connexe (Lemme \ref{Pic-tors-Br-fini}).

\begin{corD}[Corollaires \ref{application-fibre-en-SB}, \ref{fibre en surfaces de chatelet}]
L'obstruction de Brauer-Manin est la seule au principe de Hasse/à l'approximation forte
pour les zéro-cycles de degré $1$ sur les fibrés au-dessus de l'espace projectif en variétés
de Severi-Brauer ou en surfaces de Châtelet.
\end{corD}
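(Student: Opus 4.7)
La stratégie consiste à appliquer les Théorèmes A et C aux deux familles de fibrations et à vérifier leurs hypothèses dans chaque cas. Tout d'abord, l'hypothèse commune que $Br(X_{\bar{\eta}})$ est fini et $Pic(X_{\bar{\eta}})$ est sans torsion est satisfaite: une variété de Severi-Brauer et une surface de Châtelet sont géométriquement rationnelles, donc la fibre générique géométrique $X_{\bar{\eta}}$ est rationnellement connexe, et le Lemme \ref{Pic-tors-Br-fini} fournit le résultat.

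Pour les fibrés en surfaces de Châtelet, on applique le Théorème A. Une surface de Châtelet, donnée par une équation de la forme $y^2-az^2=P(x),$ est toujours géométriquement intègre en tant que surface irréductible géométriquement rationnelle, même lorsqu'elle est singulière. Par conséquent, toutes les fibres du morphisme $X\to\mathbb{P}^n$ sont géométriquement intègres, ce qui vérifie la première hypothèse. Pour les fibres aux points fermés d'un ouvert non vide convenable, il s'agit de surfaces de Châtelet lisses sur des extensions finies de $k,$ pour lesquelles l'obstruction de Brauer-Manin est la seule au principe de Hasse et à l'approximation faible, tant pour les points rationnels (Colliot-Thélène, Sansuc, Swinnerton-Dyer) que pour les zéro-cycles de degré $1$ (Salberger, Colliot-Thélène).

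Pour les fibrés en variétés de Severi-Brauer, les fibres en codimension $1$ peuvent ne pas être géométriquement intègres, ce qui exclut l'application directe du Théorème A. On fait donc appel au Théorème C. Aux points fermés d'un ouvert non vide convenable, la fibre $X_\theta$ est une variété de Severi-Brauer sur une extension finie de $k$; par le théorème de Châtelet, la non-obstruction locale entraîne l'isomorphisme à un espace projectif, d'où le principe de Hasse et l'approximation faible inconditionnellement. L'hypothèse Abélienne-Scindée au-dessus des points $\theta$ de codimension $1$ se vérifie par l'observation suivante: la fibre $X_\theta$ admet une composante irréductible de multiplicité $1$ dont le corps de fonctions contient une extension abélienne (en fait cyclique) de $k(\theta),$ car la classe de Brauer définissant le fibré de Severi-Brauer est décrite, à la ramification près, par une algèbre cyclique sur le corps résiduel, conformément à la théorie utilisée par Wittenberg dans \cite{WittenbergLNM}.

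La difficulté principale réside dans la vérification précise de la condition Abélienne-Scindée pour les fibrés en variétés de Severi-Brauer, qui nécessite une analyse soigneuse de la ramification de la classe de Brauer associée au-dessus des diviseurs de codimension $1$ de $\mathbb{P}^n.$ Une fois cette vérification établie, les conclusions souhaitées pour les zéro-cycles de degré $1$ découlent directement de l'application des Théorèmes A et C respectivement, modulo éventuellement des ajustements techniques pour passer de l'approximation faible à la forme forte énoncée dans le corollaire.
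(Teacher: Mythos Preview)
Your treatment of the Severi--Brauer case is essentially the paper's: apply Th\'eor\`eme~C, verify \textsc{(Ab\'elienne-Scind\'ee)} via the cyclic ramification of the Brauer class (as in Wittenberg), and use that Severi--Brauer varieties satisfy the Hasse principle and weak approximation. The passage to strong approximation is not a loose end: it follows directly from part~(iii) of Th\'eor\`eme~\ref{Pn-codim1}, since the generic fibre is geometrically rational, hence rationally connected.

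Your argument for Ch\^atelet-surface bundles, however, has a genuine gap. A \emph{fibr\'e en surfaces de Ch\^atelet} is by definition a smooth projective $X$ with a dominant morphism $X\to\mathbb{P}^n$ whose \emph{generic} fibre is a Ch\^atelet surface; nothing is assumed about the closed fibres. There is no reason for every fibre, nor even every codimension-$1$ fibre, to be geometrically integral: once one compactifies and desingularises the total space, the degenerate fibres can be whatever the geometry dictates. Even at the level of the affine equation $y^2-az^2=P(x)$, specialisations where $a$ becomes a square and $P$ vanishes identically produce reducible fibres. So the hypothesis $codim(D)\geqslant 2$ of Th\'eor\`eme~A is simply not available, and your appeal to it fails.

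The paper circumvents this by a birational trick: since a Ch\^atelet surface is itself a conic bundle over $\mathbb{P}^1$, the total space $X\to\mathbb{P}^n$ is birational to a conic bundle over $\mathbb{P}^n\times\mathbb{P}^1$, hence (after the Segre embedding or simply working up to birational equivalence) over $\mathbb{P}^{n+1}$. One then invokes the Severi--Brauer case already established, via Th\'eor\`eme~\ref{Pn-codim1}. This avoids any analysis of the bad fibres of the original fibration.
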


Le théorème A (resp. B) est la version pour les zéro-cycles des résultats de Harari:
\cite[Théorème 4.2.1]{Harari} et \cite[Théorème 1]{Harari3} (resp. \cite[Théorème 4.3.1]{Harari}).
Le théorème C est la version pour les zéro-cycles du théorème 3.5 de Wittenberg \cite{WittenbergLNM}.
Il a remarqué la validité du théorème $C$ sans en donner une preuve détaillée, ce travail confirme sa remarque.
On renvoie les lecteurs au texte ci-dessous pour les assertions plus précises.
Cet article comporte une discussion détaillée
autour l'arithmétique des zéro-cycles sur une fibration au-dessus de $\mathbb{P}^n.$
La méthode pour les questions sur les points rationnels ne s'étend pas directement pour résoudre
les questions sur les zéro-cycles, même si l'idée principale est similaire.  
Le théorème A (avec $n=1$) est aussi le point de départ des
résultats qui relient l'arithmétique des points rationnels et l'arithmétiques des zéro-cycles
sur les variétés rationnellement connexes, ceci sera expliqué dans l'article de l'auteur
\cite{Liang2short}.

Après quelques rappels dans \S \ref{notations}, on énonce et démontre les théorèmes
A et B pour le cas $n=1$ dans \S \ref{P1}; ensuite on énonce les théorèmes ci-dessus
sous forme plus détaillée et on les démontre dans \S \ref{Pn}; enfin on discute quelques applications dans \S \ref{appl}.


\section{Notations et rappels}\label{notations}

Dans tout ce travail, $k$ est toujours un corps de nombres. On note $\Omega_k$ l'ensemble
des places de $k.$ Pour chaque place $v\in\Omega_k,$
on note $k_v$ le corps local associé.
L'expression \og presque tout\fg\mbox{} signifie toujours
\og tout à l'exception d'un nombre fini\fg.

Soit $X$ une variété projective lisse et géométriquement intègre sur un corps $k,$ le composé de la
corestriction et de l'application d'évaluation définit un accouplement
\begin{equation*}
    \begin{array}{rcccl}
     \langle\cdot,\cdot\rangle_k:   Z_0(X) & \times & Br(X) & \to & Br(k),\\
          (\mbox{ }\sum_Pn_PP &,& b\mbox{ }) & \mapsto & \sum_Pn_Pcores_{k(P)/k}(b(P)),\\
    \end{array}
\end{equation*}
qui se factorise à travers l'équivalence rationnelle, où $Br(\cdot)=H^2_{\mbox{\scriptsize\'et}}(\cdot,\mathbb{G}_m)$
est le groupe de Brauer cohomologique.
Lorsque $k$ est un corps de nombres, on définit
l'\emph{accouplement de Brauer-Manin} pour les zéro-cycles:
\begin{equation*}
    \begin{array}{rcccl}
     \langle\cdot,\cdot \rangle _k:\prod_{v\in\Omega_k}Z_0(X_v) & \times & Br(X) & \to & \mathbb{Q}/\mathbb{Z},\\
         (\mbox{ }\{z_v\}_{v\in\Omega_k} & , & b\mbox{ }) & \mapsto & \sum_{v\in\Omega_k}inv_v(\langle z_v,b \rangle _{k_v}),\\
    \end{array}
\end{equation*}
où $inv_v:Br(k_v)\hookrightarrow\Q/\Z$ est l'invariant local en $v.$
On peut définir, pour les zéro-cycles
de degré $1$, l'obstruction de Brauer-Manin
au principe de Hasse, ou à l'approximation faible en un certain sens, \textit{cf.}  \S \ref{AF} ci-dessous et \cite{CT99HP0-cyc}
pour plus d'informations.

Soit $X$ une variété intègre sur un corps $k,$ un sous-ensemble $\textsf{Hil}\subset X$
de points fermés est un \emph{sous-ensemble hilbertien généralisé} s'il existe un morphisme
étale fini $Z\buildrel\rho\over\To U\subset X$ avec $U$ un ouvert non vide de $X$ et $Z$ intègre tel que
$\textsf{Hil}$ soit l'ensemble des points fermés $\theta$ de $U$ pour lesquels $\rho^{-1}(\theta)$ est connexe.
Soit $\textsf{Hil}_i$ ($i=1,2$) un sous-ensemble hilbertien généralisé, on
peut trouver un sous-ensemble hilbertien généralisé $\textsf{Hil}\subset\textsf{Hil}_1\cap \textsf{Hil}_2,$
\textit{cf.} \cite[\S 1.2]{Liang1}. On remarque un
sous-ensemble hilbertien généralisé $\textsf{Hil}$ est toujours non vide si $k$ est un corps
de nombres. En fait, en restreignant $U,$
on trouve un morphisme étale fini $U\to V$ où $V$ est un ouvert non vide de $\mathbb{P}^d$ avec
$d=dim(X).$ Son composé avec $Z\to U$ définit un sous-ensemble hilbertien généralisé $\textsf{Hil}'$ de $\mathbb{P}^d.$
Le théorème d'irréductibilité de Hilbert dit que $\textsf{Hil}'\cap\mathbb{P}^d(k)\neq\emptyset,$
qui implique immédiatement que $\textsf{Hil}\neq\emptyset.$

On fixe $\bar{k}_v$ une clôture algébrique de $k_v.$
Étant donné $P$ un point fermé de $X_v=X\times_kk_v,$ on fixe un $k_v$-plongement $k_v(P)\To \bar{k}_v,$
le point $P$ est vu comme un point $k_v(P)$-rationnel de $X_v.$
On dit qu'un point fermé $Q$ de $X_v$ est \emph{suffisamment proche}  de $P$ (par rapport à un voisinage $U_P$ de $P$ dans
l'espace topologique $X_v(k_v(P))$), si $Q$ a corps résiduel $k_v(Q)=k_v(P)$ et si l'on peut choisir un
$k_v$-plongement $k_v(Q)\To \bar{k}_v$ tel que $Q,$ vu comme un $k_v(Q)$-point rationnel de $X_v,$ soit contenu dans $U_P.$
En étendant $\mathbb{Z}$-linéairement, cela a un sens de dire que $z'_v\in Z_0(X_v)$ est suffisamment proche de $z_v\in Z_0(X_v)$
(par rapport à un système de voisinages des points qui apparaissent dans le support de $z_v$).
D'après la continuité de l'accouplement de Brauer-Manin, \textit{cf.} \cite[Lemma 6.2]{B-Demarche},
pour un sous-ensemble \emph{fini} $B\subset Br(X_v),$ on a
$\langle z'_v,b \rangle _v= \langle z_v,b \rangle _v\in Br(k_v)$ pour tout $b\in B$ si
$z'_v$ suffisamment proche de $z_v$ (par rapport à $B$).

\subsection{Approximation pour les zéro-cycles}\label{AF}

Soit $X$ une variété projective lisse et géométriquement intègre de dimension $d$ sur un corps de nombres $k.$
Pour une place $v\in\Omega_k,$ on pose $X_v=X\times_kk_v.$
On considère, pour un entier $m$ positif non nul, la flèche
$$CH_0(X_v)\to CH_0(X_v)/m.$$
On dit que \emph{l'obstruction de Brauer-Manin est la seule à l'approximation faible
(resp. forte) (au niveau du groupe de Chow)}
pour les zéro-cycles de degré $\delta$ sur $X,$
si pour tout entier positif non nul $m,$ pour tout ensemble fini $S\subset\Omega$ (resp. pour $S=\Omega$),
et pour toute famille $\{z_v\}_{v\in\Omega_k}$ de zéro-cycles locaux de degré $\delta$ orthogonale au groupe de
Brauer $Br(X),$
il existe un zéro-cycle global $z$ sur $X$ de degré $\delta$ tel que $z$ et $z_v$ aient la même image dans
$CH_0(X_v)/m$ pour toute $v\in S.$
Dans ce travail on omet la phrase \og \emph{au niveau du groupe de Chow} \fg\mbox{ } quand on parle d'approximation faible
pour les zéro-cycles.

\subsection{Lemmes de déplacement}
On dit qu'un zéro-cycle est \emph{séparable} s'il est écrit comme une combination finie $\Z$-linéaire
de points fermés sans multiplicité.
On va appliquer plusieurs fois les lemmes de déplacement suivants.

\begin{lem}\label{deplacement}
Soient $X$ une variété  intègre régulière  sur un corps parfait infini $k,$ et $U$ un ouvert non vide de $X.$
Alors tout  zéro-cycle $z$ de $X$ est rationnellement équivalent, sur $X,$ à un zéro-cycle $z'$ à support dans $U.$
\end{lem}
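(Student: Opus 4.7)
Par $\mathbb{Z}$-lin\'earit\'e de l'\'equivalence rationnelle, on se ram\`ene au cas o\`u $z=[P]$ pour un unique point ferm\'e $P$ de $X$; si $P$ appartient \`a $U$ il n'y a rien \`a d\'emontrer, on suppose donc $P\in F:=X\setminus U$. L'approche consiste \`a d\'eplacer $P$ le long d'une courbe auxiliaire par \'equivalence lin\'eaire, gr\^ace au th\'eor\`eme de Riemann-Roch.

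\'Etape 1 (construction d'une courbe). Puisque $X$ est r\'eguli\`ere en $P$ et que $k$ est infini et parfait, un argument de type Bertini avec point base (it\'eration de sections hyperplanes g\'en\'eriques passant par $P$), appliqu\'e \`a un voisinage quasi-projectif de $P$ dans $X$, fournit une courbe ferm\'ee int\`egre $C\subset X$ passant par $P$, lisse en $P$, et non contenue dans $F$. Soit $f\colon\tilde C\to X$ la compos\'ee de la normalisation $\nu\colon\tilde C\to C$ avec l'inclusion $C\hookrightarrow X$: c'est un morphisme propre issu d'une courbe int\`egre projective lisse. La lissit\'e de $C$ en $P$ donne $\nu^{-1}(P)=\{Q\}$ avec $k(Q)=k(P)$, et par cons\'equent $f_*[Q]=[P]$ dans $Z_0(X)$.

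\'Etape 2 (d\'eplacement lin\'eaire). L'ensemble $T:=f^{-1}(F)$ est un ensemble fini de points ferm\'es de $\tilde C$ dont le compl\'ementaire $f^{-1}(U)$ est non vide. Soit $g$ le genre de $\tilde C$; choisissons un diviseur effectif $E$ sur $\tilde C$, \`a support dans $f^{-1}(U)$, tel que $\deg([Q]+E)\geq 2g+1$. Le syst\`eme lin\'eaire complet $|[Q]+E|$ est alors sans point base (par Riemann-Roch), et puisque $k$ est infini on y choisit un diviseur $D'$ de support disjoint de $T$. L'\'equivalence $D'\sim [Q]+E$ sur $\tilde C$, pouss\'ee en avant par $f$, donne $f_*(D'-E)\sim f_*[Q]=[P]$ dans $Z_0(X)$; le z\'ero-cycle $z':=f_*(D'-E)$ est \`a support dans $U$, ce qui r\'epond \`a la question.

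L'obstacle principal r\'eside dans l'\'etape 1: garantir simultan\'ement l'existence d'une courbe passant par le point impos\'e $P$, lisse en $P$, et non contenue dans $F$. Elle repose sur une application soigneuse du th\'eor\`eme de Bertini avec point base, faisant intervenir de mani\`ere cruciale la r\'egularit\'e de $X$ en $P$ ainsi que les hypoth\`eses sur le corps $k$ (infini et parfait). Une fois cette courbe disponible, la suite de la preuve est formelle.
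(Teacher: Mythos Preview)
Your approach is the standard one and matches the argument in the reference the paper cites (\cite{CT05}, \S 3): reduce to a single closed point, produce a curve through it via Bertini with a base point, then move along the normalized curve using Riemann--Roch. One minor imprecision: since $X$ is not assumed proper, the closed curve $C\subset X$ need not be proper, so its normalization $\tilde C$ need not be projective as you assert; the routine fix is to pass to the smooth projective completion $\bar C$ of $\tilde C$, enlarge $T$ by the finitely many boundary points $\bar C\setminus\tilde C$, and note that the rational function produced on $\bar C$ then has divisor supported entirely in $\tilde C$, so it restricts there and pushes forward to $X$ via the proper morphism $f$.
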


\begin{proof}
On trouve une démonstration détaillée dans \cite{CT05} \S 3.
\end{proof}

\begin{lem}\label{deplacement2}
Soit $\pi:X\To\mathbb{P}^1$ un morphisme non constant au-dessus de la droite projective sur
$\mathbb{R},$ $\mathbb{C}$ ou sur un corps $p$-adique,
avec $X$ une variété lisse intègre.
Soient $D$ un ensemble fini de points fermés de $\mathbb{P}^1,$ $X_0$ un ouvert de Zariski non vide de $X.$

Alors, pour tout zéro-cycle $z$ à support dans $X_0,$ il existe un zéro-cycle séparable $z'$
à support dans $X_0$ tel que $z'$ soit suffisamment proche de $z$ et tel que $\pi_*(z')$ soit séparable à support en dehors de $D.$
Les zéro-cycles $\pi_*(z)$ et $\pi_*(z')$ sont rationnellement équivalents.
\end{lem}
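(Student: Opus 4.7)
Le plan consiste à perturber chaque point fermé du support de $z$ dans la topologie analytique appropriée, en le séparant en plusieurs points voisins distincts lorsqu'il apparaît avec multiplicité, afin d'assurer la séparabilité à la fois de $z'$ et de son image directe par $\pi.$

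On commence par écrire $z = \sum_{i=1}^r n_i P_i$ comme combinaison $\Z$-linéaire à coefficients $n_i \neq 0$ de points fermés distincts $P_i$ de $X_0,$ et l'on note $K_i = k_v(P_i).$ Vu chaque $P_i$ comme un point $K_i$-rationnel de $X_v,$ on fixe un voisinage analytique $U_i \subset X_0(K_i)$ de $P_i$ dans l'espace $K_i$-analytique $X_v(K_i)$: alors $U_i$ est une $K_i$-variété analytique lisse de dimension $\dim X.$ Comme $X$ est intègre, $\pi$ non constant, et $k_v$ de caractéristique zéro, le morphisme $\pi$ est génériquement étale, son lieu de ramification $R$ est un fermé algébrique propre, et la restriction de $\pi$ à $U_i \setminus R$ est une application $K_i$-analytique ouverte et non constante.

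Dans chaque $U_i,$ on choisit successivement $|n_i|$ points $Q_i^{(1)}, \ldots, Q_i^{(|n_i|)}$ qui, outre le fait d'être deux à deux distincts et distincts de tous les points déjà choisis, satisfont: \textbf{(a)} le corps résiduel de $Q_i^{(j)}$ comme point fermé de $X$ vaut $K_i$; \textbf{(b)} le corps résiduel de $\pi(Q_i^{(j)})$ comme point fermé de $\P^1$ vaut $K_i$; \textbf{(c)} le point $\pi(Q_i^{(j)})$ évite $D$ et est distinct de tous les $\pi(Q_{i'}^{(j')})$ déjà choisis. Chaque condition exclut une réunion finie de fermés d'intérieur vide dans $U_i$: pour (a), les sous-ensembles $X(L) \cap U_i$ pour les sous-corps intermédiaires $k_v \subseteq L \subsetneq K_i,$ en nombre fini; pour (b), les préimages $\pi^{-1}(\P^1(L)) \cap U_i,$ d'intérieur vide grâce au caractère ouvert de $\pi$ sur $U_i \setminus R$; pour (c), les préimages d'un nombre fini de points. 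Le choix séquentiel est donc possible.

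On pose alors $z' = \sum_{i,j} \mathrm{sgn}(n_i) \cdot Q_i^{(j)}.$ Par construction, $z'$ est séparable, à support dans $X_0,$ et suffisamment proche de $z$ au sens du \S\ref{notations} en prenant les $U_i$ contenus dans les voisinages prescrits. La condition (b) entraîne $\pi_*(Q_i^{(j)}) = \pi(Q_i^{(j)})$ avec multiplicité $1,$ et par (c) les images sont deux à deux distinctes et évitent $D,$ d'où la séparabilité de $\pi_*(z')$ et son support en dehors de $D.$ Enfin, $\deg \pi_*(z') = \sum_i n_i [K_i : k_v] = \deg \pi_*(z),$ et comme $CH_0(\P^1_{k_v}) \simeq \Z$ via le degré, les zéro-cycles $\pi_*(z)$ et $\pi_*(z')$ sont rationnellement équivalents sur $\P^1.$ Le point délicat du raisonnement sera la généricité simultanée des conditions (a)-(c), laquelle repose sur l'intégralité de $X,$ la non-constance de $\pi,$ et la caractéristique zéro du corps de base local.
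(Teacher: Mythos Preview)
Your argument is essentially correct and follows the same route as the references the paper cites (the paper itself gives no details beyond invoking the implicit function theorem and observing that equal degree implies rational equivalence on $\mathbb{P}^1$). Two small points deserve attention.

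First, a terminological slip: you write that $\pi$ is ``g\'en\'eriquement \'etale'' with ``lieu de ramification $R$.'' This is only literally correct when $\dim X=1$; for $\dim X>1$ the morphism $\pi:X\to\mathbb{P}^1$ has positive-dimensional fibres and is never \'etale. What you need, and what holds in characteristic zero by generic smoothness, is that $\pi$ is \emph{lisse} over a dense open of $\mathbb{P}^1$, so that outside a proper closed $R\subset X$ the induced map on $K_i$-points is a submersion, hence open. Your use of openness (to show $\pi^{-1}(\mathbb{P}^1(L))$ has empty interior) is then valid.

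Second, when you require the $Q_i^{(j)}$ to be ``deux \`a deux distincts,'' you should mean distinct \emph{as closed points of $X$}, not merely as $K_i$-points: two Galois-conjugate $K_i$-points determine the same closed point. The fix is immediate---also avoid the finitely many Galois conjugates of previously chosen points---but it should be said. The same remark applies to condition (c) for the images in $\mathbb{P}^1$.

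With these adjustments your proof is complete and matches the intended argument.
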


\begin{proof}
Essentiellement, ce résultat se déduit du théorème des fonctions implicites. On trouve les arguments à la page 19 de
\cite{CT-Sk-SD} et 89 de \cite{CT-SD}.
Les zéro-cycles $\pi_*(z)$ et $\pi_*(z')$
sont automatiquement rationnellement équivalents sur $\mathbb{P}^1$ car ils ont le même degré.
\end{proof}

\subsection{Lemme d'irréductibilité de Hilbert pour les zéro-cycles}

Le lemme suivant est une version plus fine du lemme 3.4 de \cite{Liang1} appliqué à $\mathbb{P}^1.$
Il est en un certain sens une version effective du théorème
d'irréductibilité de Hilbert pour les zéro-cycles,
c'est aussi la version pour les zéro-cycles du théorème 1.3 de Ekedahl \cite{Ekedahl}.

\begin{lem}\label{simplified}
Soit $k$ un corps de nombres.
On fixe un $k$-point $\infty\in\mathbb{P}^1,$ et on note $\mathbb{A}^1=\mathbb{P}^1\setminus\{\infty\}.$
Soit $\textsf{Hil}$ un sous-ensemble hilbertien généralisé de $\mathbb{A}^1.$
Soient $S$ un sous-ensemble fini non vide de $\Omega_k$ et $v_0$ une place non-archimédienne hors de $S.$

Soit $z_v\in Z_0(\mathbb{P}^1_v)$ un zéro-cycle effectif séparable de degré $d>0$ supporté dans $\mathbb{A}^1$
pour toute $v\in S.$

Alors il existe un point fermé $\theta$ de $\mathbb{A}^1$ de degré $d,$ tel que

(1) $\theta\in\textsf{Hil},$

(2) $\theta$ soit entier en dehors de $S\cup\{v_0\},$

(3) $\theta$ soit suffisamment proche de $z_v$ pour toute $v\in S.$
\end{lem}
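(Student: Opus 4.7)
Le plan est de ramener l'\'enonc\'e \`a une application du th\'eor\`eme effectif d'irr\'eductibilit\'e de Hilbert sur un espace affine, via le produit sym\'etrique. Les fonctions sym\'etriques \'el\'ementaires identifient un z\'ero-cycle effectif s\'eparable de degr\'e $d$ support\'e dans $\mathbb{A}^1$ \`a un point de l'ouvert $V\subset\mathbb{A}^d$ compl\'ementaire du lieu discriminant (les coefficients du polyn\^ome unitaire associ\'e). Notons $q_v\in V(k_v)$ le point correspondant \`a $z_v$ pour $v\in S$. Un point ferm\'e $\theta\in\mathbb{A}^1$ de degr\'e exactement $d$ correspond pr\'ecis\'ement \`a un $k$-point $a\in V$ dont le polyn\^ome unitaire associ\'e $P_a$ est $k$-irr\'eductible.

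La premi\`ere \'etape sera de traduire la condition hilbertienne (1) en une condition hilbertienne sur $V$. Soit $\rho:Z\to U\subset\mathbb{A}^1$ le rev\^etement fini \'etale connexe d\'efinissant $\textsf{Hil}$, avec $Z$ int\`egre. Quitte \`a restreindre $V$, je consid\`ere $R\subset V\times U$, la sous-vari\'et\'e des couples $(a,t)$ avec $P_a(t)=0$: le morphisme $R\to V$ est un rev\^etement fini \'etale de degr\'e $d$, g\'eom\'etriquement int\`egre (le groupe de monodromie du polyn\^ome unitaire g\'en\'erique \'etant $S_d$). Comme $R\to U$ est dominant \`a fibres g\'eom\'etriquement connexes (ouverts d'espaces affines), la fl\`eche $\pi_1(R_{\bar{k}})\to\pi_1(U_{\bar{k}})$ est surjective et le produit fibr\'e $\tilde{R}=R\times_UZ$ reste g\'eom\'etriquement int\`egre. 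Le rev\^etement fini \'etale $\tilde{R}\to V$ d\'efinit donc un sous-ensemble hilbertien g\'en\'eralis\'e $\textsf{Hil}^*\subset V$, caract\'eris\'e par la propri\'et\'e suivante: pour $a\in V(k)\cap\textsf{Hil}^*$, la fibre $\tilde{R}\times_V\{a\}\cong Z\times_U\mathrm{Spec}(k[t]/P_a)$ est connexe, ce qui force \`a la fois l'irr\'eductibilit\'e de $P_a$ et l'appartenance de $\theta_a:=\mathrm{Spec}(k[t]/P_a)$ \`a $\textsf{Hil}$.

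Il suffira alors de trouver $a\in V(k)\cap\textsf{Hil}^*$ suffisamment proche de $q_v$ pour tout $v\in S$ et $v$-entier pour tout $v\notin S\cup\{v_0\}$; la proximit\'e du point ferm\'e $\theta_a$ \`a $z_v$ r\'esulte alors de la continuit\'e des racines d'un polyn\^ome par rapport \`a ses coefficients (argument de type Krasner, utilisant que $z_v$ est s\'eparable), et l'int\'egralit\'e de $\theta_a$ hors de $S\cup\{v_0\}$ d\'ecoule de celle des coefficients de $P_a$. Ceci est pr\'ecis\'ement l'\'enonc\'e d'un th\'eor\`eme effectif d'irr\'eductibilit\'e de Hilbert avec approximation et int\'egralit\'e prescrites sur l'espace affine $V\subset\mathbb{A}^d$, qui s'obtient en appliquant directement \cite[Th\'eor\`eme 1.3]{Ekedahl}, ou bien en combinant le lemme 3.4 de \cite{Liang1} avec un argument d'approximation forte standard; la place auxiliaire $v_0$ joue son r\^ole classique en absorbant les d\'enominateurs cr\'e\'es par les conditions locales en $S$. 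L'obstacle principal sera ainsi la v\'erification de l'int\'egrit\'e g\'eom\'etrique du rev\^etement compos\'e $\tilde{R}\to V$, qui repose sur la transitivit\'e $S_d$ du rev\^etement des racines et la connexit\'e de $Z\to U$.
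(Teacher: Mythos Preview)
Your approach is correct and genuinely different from the paper's. The paper proceeds in two steps: first it uses strong approximation on the coefficients to produce a single monic polynomial $f\in k[T]$ of degree $d$ that is close to each $f_v$ (where $z_v-d\infty=\mathrm{div}(f_v)$) and integral outside $S\cup\{v_0\}$; then it views $f$ as a degree-$d$ self-map $F:\mathbb{P}^1\to\mathbb{P}^1$ with $F^{*}(0)=z'$, pushes the Hilbert condition forward along $F$ (the composite $Z\to U\xrightarrow{F}\mathbb{A}^1$ defines a generalized Hilbert subset $\textsf{Hil}'$ of the \emph{target} $\mathbb{A}^1$), and applies Ekedahl on $\mathbb{A}^1$ to find a rational point $\theta'\in\textsf{Hil}'$ close to $0$ and integral outside $S\cup\{v_0\}$. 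The sought $\theta$ is then the zero locus of $f-\theta'$. By contrast, you go \emph{up} to the symmetric product $\mathbb{A}^d$, lift the Hilbert condition there via the universal-root cover, and invoke Ekedahl directly on $\mathbb{A}^d$. Your route is more conceptual and avoids the preliminary choice of $f$; the paper's route keeps everything on $\mathbb{A}^1$, so the auxiliary Hilbert set is obtained by a mere composition rather than by analysing the connectedness of a fibre product.

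One small imprecision: you argue that $\tilde{R}=R\times_U Z$ is \emph{g\'eom\'etriquement} int\`egre via the surjectivity of $\pi_1(R_{\bar{k}})\to\pi_1(U_{\bar{k}})$. But in the definition of a generalized Hilbert subset $Z$ is only assumed integral, so $Z_{\bar{k}}$ may well be disconnected, and then so is $\tilde{R}_{\bar{k}}$. What you actually need (and what holds) is that $\tilde{R}$ is \emph{int\`egre}: this follows cleanly from the fact that $k(R)$ is purely transcendental over $k(U)=k(t)$ (solve $P_a(t)=0$ for $a_d$ to identify $R$ birationally with $\mathbb{A}^{d-1}\times\mathbb{A}^1$), whence $k(Z)\otimes_{k(U)}k(R)\cong k(Z)(a_1,\ldots,a_{d-1})$ is a field. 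With this fix your argument goes through.
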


\begin{proof}
Pour chaque $v\in S,$ on peut écrire $z_v-d\infty=div(f_v)$ avec $f_v$ une fonction rationnelle de $\mathbb{P}^1_v,$
autrement-dit $f_v$ est un polynôme à coefficients dans $k_v$ unitaire, de degré $d.$
D'après l'approximation forte pour un corps de nombres, il existe un polynôme $f$ à coefficients dans $k$
unitaire de degré $d$ tel que

(i)$f$ soit suffisamment proche de $f_v$ pour tout $v\in S,$

(ii)$f$ soit à coefficients entiers en dehors de $S\cup\{v\}.$

On écrit $z'-d\infty=div(f),$ grâce au lemme de Krasner
le zéro-cycle effectif $z'$ est suffisamment proche de $z_v$ pour $v\in S,$
il est alors séparable.

Le polynôme $f$ définit un $k$-morphisme fini $F:\mathbb{P}^1\to\mathbb{P}^1$ de degré $d$
tel que $F^*(\infty)=d\infty$ et $F^*(0)=z'$
Supposons que le sous-ensemble hilbertien généralisé $\textsf{Hil}$ est défini par
un morphisme quasi-fini $Z\to\mathbb{A}^1$ avec $Z$ une variété intègre.
La composition $\varphi:Z\to\mathbb{A}^1\buildrel{F}\over\to\mathbb{A}^1$ définit un sous-ensemble hilbertien
généralisé $\textsf{Hil}'$ de $\mathbb{A}^1.$ En restreignant $\varphi$ à
un certain ouvert non vide si nécessaire, on peut supposer de plus que
$\theta=F^{-1}(\theta')\in \textsf{Hil}$ une fois qu'on a $\theta'\in \textsf{Hil}'.$
D'après le théorème d'irréductibilité de Hilbert (de version effective par Ekedahl \cite[Theorem 1.3]{Ekedahl}),
il existe un $\theta'\in \textsf{Hil}'\cap\mathbb{A}^1(k)$ suffisamment proche de $0\in\mathbb{A}^1(k_v)$ pour toute $v\in S$
et de plus $\theta'$ est entier en dehors de $S\cup\{v_0\}.$
On prend $\theta=F^{-1}(\theta')\in \textsf{Hil},$ il est suffisamment proche de $z'$ et alors suffisamment proche de $z_v$ pour toute
$v\in S.$ De plus le point $\theta\in\mathbb{A}^1$ est défini par le polynôme $f-\theta',$
il est alors entier en dehors de $S\cup\{v_0\}.$
\end{proof}

\subsection{Variétés rationnellement connexes}

On rappelle la notion de connexité rationnelle au sens de Koll\'ar, Miyaoka et Mori \cite[IV.3]{Kollar}.
Une variété $X$ définie sur un corps $k,$ quelconque de caractéristique nulle, est dite \emph{rationnellement connexe},
si pour toute paire des points $P,Q\in X(L)$ il existe un $L$-morphisme $f:\mathbb{P}^1_L\to X_L$ tel que
$f(0)=P$ et $f(\infty)=Q,$
où $L$ est un certain corps algébriquement clos non dénombrable et contenant $k.$

Le lemme suivant est connu depuis longtemps, on inclut une preuve ici pour le confort du lecteur.
Il nous permet de vérifier dans les théorèmes de cet article l'hypothèse que
$PicX_{\bar{\eta}}$ est sans torsion et $Br(X_{\bar{\eta}})$ est fini.

\begin{lem}\label{Pic-tors-Br-fini}
Soit $X$ une variété projective connexe et lisse sur un corps $k$ algébriquement clos de caractéristique nulle.
Si $X$ est rationnellement connexe, alors son groupe de Picard $Pic(X)$ est sans torsion, et
son groupe de Brauer $Br(X)$ est fini.
\end{lem}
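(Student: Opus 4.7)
The plan is to deduce both assertions from two classical facts about a smooth projective rationally connected variety $X$ over an algebraically closed field of characteristic zero:
\begin{itemize}
\item[(F1)] $H^i(X,\mathcal{O}_X)=0$ for every $i\geq 1$;
\item[(F2)] $X$ is simply connected in the \'etale sense, i.e., $\pi_1(X)=1$.
\end{itemize}
Fact (F1) reduces, via Hodge symmetry after base change to $\Complex$, to the well-known vanishing $H^0(X,(\Omega^1_X)^{\otimes m})=0$ for all $m\geq 1$ on rationally connected varieties (see \cite{Kollar}); fact (F2) is the theorem of Campana and Koll\'ar--Miyaoka--Mori. Since both conclusions of the lemma are preserved under algebraically closed extensions in characteristic zero, I would first reduce to $k=\Complex$ in order to freely exploit analytic methods.

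For the torsion-freeness of $Pic(X)$, I would apply the Kummer exact sequence $1\to\mu_n\to\mathbb{G}_m\to\mathbb{G}_m\to 1$ on $X$, which yields
\begin{equation*}
0\to k^\times/(k^\times)^n\to H^1_{\mathrm{et}}(X,\mu_n)\to Pic(X)[n]\to 0.
\end{equation*}
The leftmost term vanishes since $k^\times$ is divisible, and under the isomorphism $\mu_n\simeq\Z/n$ (valid over our $k$) the middle term becomes $\mathrm{Hom}(\pi_1(X),\Z/n)=0$ by (F2). Hence $Pic(X)[n]=0$ for every $n\geq 1$.

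For the finiteness of $Br(X)$, I would exploit the analytic exponential sequence $0\to\Z\to\mathcal{O}_X\to\mathcal{O}_X^\times\to 0$ on $X^{\mathrm{an}}$. Combining (F1) at $i=2$, the segment
\begin{equation*}
0=H^2(X,\mathcal{O}_X)\to H^2(X^{\mathrm{an}},\mathcal{O}_X^\times)\to H^3(X^{\mathrm{an}},\Z)
\end{equation*}
yields an injection $H^2(X^{\mathrm{an}},\mathcal{O}_X^\times)\hookrightarrow H^3(X^{\mathrm{an}},\Z)$. Via GAGA the analytic group $H^2(X^{\mathrm{an}},\mathcal{O}_X^\times)$ identifies with the \'etale cohomology group $H^2_{\mathrm{et}}(X,\mathbb{G}_m)$, whose torsion subgroup is precisely $Br(X)$. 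Therefore $Br(X)$ embeds into $H^3(X^{\mathrm{an}},\Z)_{\mathrm{tors}}$, which is finite because $X^{\mathrm{an}}$ is a compact manifold and thus has finitely generated integral cohomology.

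The main obstacle is not in the diagram chases themselves but in importing the two geometric inputs (F1) and (F2); these are standard but non-trivial results specific to the theory of rationally connected varieties. Two remaining technical checkpoints are the GAGA comparison between the analytic and algebraic $H^2(\mathbb{G}_m)$, and the coincidence of the algebraic Brauer group with its cohomological version, both of which are standard for smooth projective varieties over $\Complex$.
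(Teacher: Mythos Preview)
Your argument is correct. The paper in fact opens its proof with exactly your two inputs, remarking in one line that rational connectedness gives $H^i(X,\mathcal{O}_X)=0$ for $i=1,2$ and $\pi_1(X)=0$, and that ``ceci permet de conclure'' --- so your route is the one the author has in mind as the standard proof.

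However, the paper then spells out an \emph{alternative} argument, and it is worth contrasting. For $Pic(X)$ torsion-free, the paper invokes an exact sequence from Milne relating $(NS(X)_{\tor})^*$, $\pi_1^{\ab}(X)/n$, and $(Pic^o(X)_n)^*$; this is essentially equivalent to your Kummer computation, both resting on $\pi_1(X)=1$. For the finiteness of $Br(X)$, the paper takes a genuinely different path: Colliot-Th\'el\`ene's \emph{generic point trick}. One evaluates $b\in Br(X)$, pushed to $Br(X_K)$ with $K=k(X)$, at the diagonal point $\eta\in X_K(K)$ and at a constant point $P\in X(k)\subset X_K(K)$; the first gives back $b$ in $Br(K)$ and the second gives $0$. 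Rational connectedness implies the class $\eta-P_K\in CH_0(X_K)$ is killed by some integer $m$, whence $mb=0$. Thus $Br(X)$ has bounded exponent, and combined with cofiniteness one concludes. This argument stays purely algebraic (no reduction to $\Complex$, no Hodge theory, no exponential sequence) and uses rational connectedness through $CH_0$ rather than through vanishing of Hodge numbers. Your analytic approach is perhaps more transparent but requires the base change to $\Complex$; note also that your sentence ``via GAGA the analytic group $H^2(X^{\mathrm{an}},\mathcal{O}_X^\times)$ identifies with $H^2_{\mathrm{\acute{e}t}}(X,\mathbb{G}_m)$'' is a bit too strong as stated --- what is true and sufficient is that the torsion subgroups agree, and $H^2_{\mathrm{\acute{e}t}}(X,\mathbb{G}_m)$ is already torsion since $X$ is regular.
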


\begin{proof}
La connexité rationnelle implique que $H^i(X,\mathcal{O}_X)=0(i=1,2)$ et $\pi_1(X)=0,$
 \textit{cf.} le corollaire 4.18 de \cite{Debarre}
et sa preuve, ceci permet de conclure.
On donne une preuve alternative.

Comme $X$ est rationnellement connexe, son groupe fondamental $\pi_1(X)$ est nul, \cite{Debarre},
Corollaire 4.18(b). Il existe une suite exacte
$$0\to(NS(X)_{tors})^*\to\pi_1^{ab}(X)/n\pi_1^{ab}(X)\to (Pic^o(X)_n)^*\to0$$
pour tout entier positif $n$ suffisamment divisible, où $-^*$ est le dual de Pontryagin $Hom(-,\mathbb{Q}/\mathbb{Z}),$
\textit{cf.} la preuve du corollaire III.4.19(b)
de \cite{MilneEC}. On trouve alors que le groupe de Néron-Severi $NS(X)$ est sans torsion, et que
$Pic^o(X)_n=0$ pour $n$ suffisamment divisible, le groupe de Picard $Pic(X)$ est donc sans torsion.

Montrons la finitude de $Br(X)$ avec une astuce du point générique proposée par Colliot-Thélène.
On note $\eta$ le point générique de $X$ et $K=k(\eta)$ le corps des fonctions de $X,$
on pose $X_K=X\times_kK.$ On évalue l'image $b'$ de $b\in Br(X)$ dans $Br(X_K)$ en le point
$\eta\in X_K(K),$ on obtient un élément de $Br(K)$ qui est exactement l'image de $b$
par l'inclusion naturelle $Br(X)\hookrightarrow Br(K).$ D'un autre côté, on évalue
$b'$ en un point $P\in X(k)\subset X(K),$ on obtient $0\in Br(K)$ car l'évaluation se factorise à travers $Br(k)=0.$
Comme $X$ est rationnellement connexe, la classe du zéro-cycle $\eta-P_K\in CH_0(X_K)$ est annulée par
un certain nombre entier $m\neq0$ d'après la proposition 11 de \cite{CT05}.
L'évaluation de $b'$ en cette classe donne $mb=0$ dans $Br(K),$ donc $Br(X)$ est annulé par $m.$
Comme $Br(X)$ est de type cofini, \textit{cf.} pages 80-81 de \cite{Br} , il est alors fini.
\end{proof}

Soient $k$ un corps de nombres et $X\to\mathbb{P}^1$ un $k$-morphisme dominant à fibre
générique rationnellement connexe. Pour une place $v$ de $k,$
comme l'application $CH_0(X_v)\to CH_0(\mathbb{P}^1_v)$ s'identifie à l'application de degré,
la proposition suivante est une conséquence immédiate du théorème 5 de Koll\'ar/Szab\'o \cite{Kollar-Szabo}.
Voir aussi le corollaire 2.2 de \cite{Wittenberg} pour une généralisation par Wittenberg.

\begin{prop}\label{H-CH0}
Soit $X\to\mathbb{P}^1$ une fibration au-dessus de la droite projective sur un corps de nombres $k.$
Si la fibre générique est rationnellement connexe, alors

(H CH0) l'application induite $CH_0(X_v)\to CH_0(\mathbb{P}^1_v)$ est un isomorphisme pour presque toute place $v$ de $k.$
\end{prop}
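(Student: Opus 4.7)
The plan is to reduce the statement directly to Theorem~5 of Koll\'ar-Szab\'o, after making the identification of $CH_0(\mathbb{P}^1_v)$ with $\Z$. Since the degree map furnishes an isomorphism $CH_0(\mathbb{P}^1_v) \cong \Z$ over any field, and the pushforward $\pi_* : CH_0(X_v) \to CH_0(\mathbb{P}^1_v)$ preserves the degree of zero-cycles, $\pi_*$ identifies under this isomorphism with the degree map on $CH_0(X_v)$. The proposition therefore amounts to the assertion that, for almost every place $v$, the degree map $CH_0(X_v) \to \Z$ is itself an isomorphism, equivalently $CH_0(X_v) = \Z$ (generated by the class of any $k_v$-point).

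Next, I would upgrade the rational connectedness hypothesis from the generic fiber to the total space: since $\mathbb{P}^1$ is rationally connected and $X_\eta$ is rationally connected by hypothesis, the theorem of Graber-Harris-Starr applied to the fibration $X \to \mathbb{P}^1$ yields that $X$ is geometrically rationally connected. Choosing a smooth and proper integral $\mathcal{O}_{k,S}$-model of $X$ for a suitable finite set $S \subset \Omega_k$, the base change $X_v$ is a smooth proper geometrically rationally connected variety over $k_v$ having good reduction for every $v \notin S$. Theorem~5 of \cite{Kollar-Szabo} then yields $CH_0(X_v) \cong \Z$ via degree for all such $v$ whose residue field has sufficiently large cardinality, which excludes at most finitely many more places.

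The main obstacle is conceptual rather than technical: the entire arithmetic content lies in the Koll\'ar-Szab\'o theorem, whose proof rests on subtle deformation-theoretic manipulations of rational curves in positive characteristic. Once that theorem is taken as a black box, together with the passage from $R$-triviality to $A_0$-vanishing on rationally connected varieties over local fields (which belongs to the same circle of ideas), the deduction is a formal matter of unwinding the identifications, in accordance with the author's remark that the proposition is an immediate consequence.
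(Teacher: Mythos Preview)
Your proposal is correct and follows the same approach as the paper: identify $\pi_*$ with the degree map via $CH_0(\mathbb{P}^1_v)\simeq\Z$, then invoke Theorem~5 of Koll\'ar--Szab\'o. The only addition is that you make explicit the Graber--Harris--Starr step ensuring $X$ itself is rationally connected, which the paper leaves implicit in its one-line reduction.
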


Grâce à cette proposition, on se ramène à l'approximation \emph{faible}
quand on considère l'approximation \emph{forte} pour les zéro-cycles sur
une fibration au-dessus de $\mathbb{P}^1$ à fibre générique rationnellement connexe.

\subsection{Lemme formel}

Pour le lemme formel suivant,
on trouve la version originale de Harari pour les points rationnels dans {\cite[Corollaire 2.6.1]{Harari}} et une version pour les
zéro-cycles dans {\cite[Lemma 4.5]{CT-Sk-SD}}.
\begin{lem}\label{lemme formel}
Soit $X$ une variété projective, lisse, et géométriquement intègre sur un corps de nombres $k.$
Soient $U$ un ouvert non vide de $X$ et $\{A_1,\ldots,A_n\}\subset Br(U)\subset Br(k(X)).$
Soit $r$ un nombre entier. On note $B$ l'intersection dans $Br(k(X))$ du sous-groupe $Br(X)$ et du sous-groupe engendré par
les $A_i.$

On suppose que pour chaque $v\in\Omega_k,$ il existe un zéro-cycle $z_v$ sur $X_v$ de degré $r$ supporté dans $U_v$ tel que
la famille $\{z_v\}_{v\in\Omega}$ est orthogonale à $B.$

Alors, pour tout ensemble fini $S$ de places de $k,$ il existe un ensemble fini $S'$ de places de $k$ contenant $S$
et pour chaque $v\in S'$ un zéro-cycle $z'_v$ sur $U_v$ de degré $r$ tels que
$$\sum_{v\in S'}inv_v(\langle A_i,z'_v\rangle_v)=0$$
et de plus $z'_v=z_v$ pour toute $v\in S.$
\end{lem}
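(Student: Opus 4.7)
The plan is to adapt Harari's formal lemma for rational points \cite[Corollaire 2.6.1]{Harari} to the zero-cycle context, in the spirit of \cite[Lemma 4.5]{CT-Sk-SD}. The underlying principle is Poitou-Tate global duality applied to the finite group $A/B$, where $A=\langle A_1,\ldots,A_n\rangle \subset Br(U)$ and $B=A\cap Br(X)$, combined with a Chebotarev-style local realization argument to produce arbitrary target invariants by evaluation at closed points of $U_v$.

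First I would enlarge $S$ to a finite set $S_0\supset S$ absorbing all \emph{bad} places: the archimedean ones, the places of bad reduction of a fixed integral model of $X$, and every place at which some $A_i$ has nontrivial residues along divisors of $X\setminus U$. Let $N$ be the exponent of the finite group $A/B$. For $v\notin S_0$ and any closed point $\theta$ of the reduction of $U_v$, the invariant vector $(inv_v\langle A_i,\theta\rangle_v)_i$ lies in $(\tfrac{1}{N}\Z/\Z)^n$ and depends only on the reduction modulo $v$, which opens the door to a density-type statement.

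The core of the argument then splits into two parts. For the local realization: as $v$ ranges over places outside $S_0$ and $\theta$ ranges over closed points of $U_v$ of degree at most $r$, the resulting tuples of invariants generate the Pontryagin dual of $A/B$. The argument is Chebotarev density applied to the finite �tale cover of (an open subset of) $U$ corresponding to the classes $A_i\bmod B$, translated to closed points via corestriction and combined with Lemma \ref{deplacement} to stay inside $U$. For the duality: by Poitou-Tate applied to the finite constant module $A/B$, the cokernel of the sum-of-invariants map
\begin{equation*}
\bigoplus_{v\notin S_0}\mathrm{Hom}\bigl(A/B,\,Br(k_v)\bigr)\longrightarrow \mathrm{Hom}\bigl(A/B,\,\Q/\Z\bigr)
\end{equation*}
is dual exactly to the subgroup $B=A\cap Br(X)$. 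Hence the hypothesis that $\{z_v\}$ is orthogonal to $B$ ensures that the discrepancy $\bigl(\sum_{v\in S_0}inv_v\langle A_i,z_v\rangle_v\bigr)_i$ lies in the image of that map, and so can be cancelled by modifying $z_v$ at finitely many further places, giving the desired $S'\supset S_0$ and the $z'_v$; the condition $z'_v=z_v$ for $v\in S$ is built in from the outset since we only modify outside $S_0$.

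The principal obstacle is the first part, namely realizing arbitrary elements of the Pontryagin dual of $A/B$ by invariant vectors of \emph{zero-cycles of the prescribed degree $r$ supported in $U_v$}. For rational points this is classical Chebotarev; for zero-cycles one needs both a displacement argument (to correct the support into $U$) and a corestriction computation translating ``rational points of a residue field extension'' into ``closed points of controlled degree of $U_v$'', after which the local pairing sweeps out the required group as $v$ varies. Once this local flexibility is in hand, the Poitou-Tate duality step closes the proof in a formal manner.
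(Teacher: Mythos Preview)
The paper does not give its own proof of this lemma; it simply cites \cite[Corollaire 2.6.1]{Harari} for rational points and \cite[Lemma 4.5]{CT-Sk-SD} for the zero-cycle version. Your high-level plan (local realization via \v{C}ebotarev, then cancellation of the discrepancy) matches those references, but two of the concrete steps are misdescribed in ways that would not go through as written.

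The ``Poitou--Tate'' step is not Poitou--Tate. The group $A/B$ is an abstract finite abelian group, not a Galois module, and your displayed map is not part of any arithmetic duality sequence; in particular the assertion that its cokernel is ``dual to $B$'' has no content, since the target is already $(A/B)^*$ and $B$ cannot reappear. What actually happens is elementary: for $\beta\in B\subset Br(X)$ one has $\langle\beta,z_v\rangle_v=0$ for all $v\notin S_0$ by good reduction, so the orthogonality hypothesis gives $\sum_{v\in S_0}inv_v(\langle\beta,z_v\rangle_v)=0$; hence the discrepancy functional on $A$ factors through $A/B$, and once the local realization produces all of $(A/B)^*$, plain Pontryagin duality of finite groups finishes the cancellation. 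More importantly, the local realization is \emph{not} obtained from a finite \'etale cover of $U$: classes in $Br(U)$ do not define such covers. The mechanism in \cite{Harari} and \cite{CT-Sk-SD} goes through the boundary $X\setminus U$: for $\alpha\in A\setminus Br(X)$ there is an irreducible divisor $D\subset X\setminus U$ with nonzero residue $\partial_D(\alpha)\in H^1(k(D),\Q/\Z)$, and \v{C}ebotarev is applied to the cyclic cover of (an open of) $D$ determined by this residue. One then lifts a closed point of the reduction of $D$ with prescribed Frobenius to a closed point $P_v$ of $U_v$ specializing into $D$, and the residue formula \cite[Corollaire 2.4.3]{Harari} makes $inv_v(\langle\alpha,P_v\rangle_v)$ take any prescribed value in the relevant cyclic group. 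Varying $\alpha$ and $D$ one sweeps out $(A/B)^*$. The degree-$r$ constraint is then harmless: for $v\notin S_0$ there exist $k_v$-points $Q_v\in U(k_v)$ with $\langle A_i,Q_v\rangle_v=0$ for all $i$ (Lang--Weil plus good reduction), and one takes $z'_v=P_v+(r-\deg P_v)Q_v$.
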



\section{Fibrations au-dessus de $\mathbb{P}^1$}\label{P1}

Dans cette section, on considère des fibrations $X\to\mathbb{P}^1,$ où dans tout ce texte le mot \og fibration\fg\mbox{ }
signifie un morphisme dominant à fibre générique géométriquement intègre.
L'obstruction de Brauer-Manin pour les points rationnels est discutée dans une série d'articles
de Harari \cite{Harari}, \cite{Harari2}, \cite{Harari3}, au lieu de supposer que \og beaucoup de\fg
\mbox{} fibres satisfont le
principe de Hasse (resp. l'approximation faible) on suppose seulement que
l'obstruction de Brauer-Manin est la seule pour ces fibres.
Dans cette section, on développe une
version concernant les zéro-cycles de degré $1$ parallèle aux théorèmes de Harari.
Dans l'article \cite{Harari}, deux situations ont été considérées. Dans le théorème
4.2.1 de \cite{Harari}, on suppose que toutes les fibres sont géométriquement intègres
(sauf la fibre au-dessus de $\infty\in\mathbb{P}^1(k)$), le théorème parallèle \ref{thm10} est montré
dans \S \ref{subsection2}. Dans le théorème 4.3.1 de \cite{Harari}, on suppose qu'il
existe un $k(\mathbb{P}^1)$-point rationnel sur la fibre générique, pour les zéro-cycles on montre dans \S \ref{subsection1}
le théorème parallèle \ref{thm15}, où on suppose qu'il existe un zéro-cycle de degré $1$ sur
la fibre générique. D'abord, avant les théorèmes principaux de cette section, on donne des résultats sur
la spécialisation du groupe de Picard et du groupe de Brauer.


\subsection{Flèches de spécialisation}\label{specialisation}

On considère une fibration $X\to Y$ avec $X$ et $Y$ des variétés propres lisses et géométriquement intègres,
on note $K=k(Y)$ le corps des
fonctions, et $\eta=Spec(K)$ le point
générique de $Y,$ et en plus $X_{\bar{\eta}}=X_\eta\times_K\bar{K}.$
Il existe un ouvert dense $Y_0$ de $Y$ tel que pour tout point $\gamma\in Y_0$ la fibre $X_\gamma$ soit
lisse et géométriquement intègre.
Il existe des flèches de spécialisation $Pic(X_{\bar{\eta}})\to Pic(X_{\bar{\gamma}})$
et $Br(X_{\bar{\eta}})\to Br(X_{\bar{\gamma}}),$ où $X_{\bar{\gamma}}=X_\gamma\times_{k(\gamma)}\overline{k(\gamma)}.$
Si l'on suppose de plus que $Br(X_{\bar{\eta}})$ est fini et $Pic(X_{\bar{\eta}})$ est sans torsion,
quitte à restreindre $Y_0,$ ces deux flèches sont des isomorphismes pour tout $\gamma\in Y_0,$
\textit{cf.} \cite[Proposition 3.4.2]{Harari} et \cite[Proposition 2.1.1]{Harari2}. Alors
$Br(X_{\bar{\gamma}})$ est fini et $Pic(X_{\bar{\gamma}})$ est sans torsion pour tout $\gamma\in Y_0.$

Afin de comparer les groupes de Brauer \emph{sur le corps de base}, la notion d'un sous-ensemble hilbertien généralisé
intervient.
On considère une fibration $X\to Y=\mathbb{P}^1.$
Pour un élément $b$ de $Br(K(X_\eta))=Br(k(X)),$ il existe un ouvert non vide $X_0$ de $X$ tel que
$b\in Br(X_0).$ Pour presque tout point $\theta$ de $\mathbb{P}^1,$ la fibre
$X_\theta$ est géométriquement intègre et son point générique $\eta(X_\theta)$ est dans $X_0,$ la spécialisation
de $b$ au point $\eta(X_\theta)$ est alors un élément de $Br(k(\theta)(X_\theta)).$
Lorsque ${Br(X_\eta)}/{Br(K)}$ est un groupe fini,
ceci définit
la flèche de spécialisation:
$$sp_\theta:\frac{Br(X_\eta)}{Br(K)}\To \frac{Br(X_\theta)}{Br(k(\theta))},$$
pour presque tout point $\theta$ de $\mathbb{P}^1,$
\textit{cf.} \cite[\S 3.3]{Harari} pour plus d'informations.

\begin{prop}[Harari]\label{Br-specialisation}
Soit $X\To\mathbb{P}^1$ une fibration sur un corps de nombres $k.$
On suppose que $Br(X_{\bar{\eta}})$ est fini et que $Pic X_{\bar{\eta}}$ est sans torsion.

Alors, il existe un sous-ensemble hilbertien généralisé \textsf{Hil} de $\mathbb{P}^1$ tel que
pour tout  $\theta\in\textsf{Hil},$ la flèche de spécialisation
$$sp_\theta:\frac{Br(X_\eta)}{Br(K)}\To \frac{Br(X_\theta)}{Br(k(\theta))}$$
est un isomorphisme de groupes abéliens finis.
\end{prop}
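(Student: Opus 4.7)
The plan is to combine the low-degree terms of the Hochschild--Serre spectral sequence for $\mathbb{G}_m$ on $X_\eta$ over $K=k(\mathbb{P}^1)$ and on $X_\theta$ over $k(\theta)$ with a Hilbert irreducibility argument forcing the Galois representations on the geometric Picard and Brauer groups to match up under specialization. First I would restrict to a non-empty open $U\subset\mathbb{P}^1$ over which, by the results recalled at the start of \S\ref{specialisation} (\cite[Proposition 3.4.2]{Harari}, \cite[Proposition 2.1.1]{Harari2}), the specialization maps
\[ Pic(X_{\bar{\eta}})\To Pic(X_{\bar{\theta}}), \qquad Br(X_{\bar{\eta}})\To Br(X_{\bar{\theta}}) \]
are isomorphisms. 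By Lemma \ref{Pic-tors-Br-fini} and the discussion following it, $Pic(X_{\bar{\eta}})$ is finitely generated and torsion-free, while $Br(X_{\bar{\eta}})$ is finite, so both specializations are isomorphisms of finite data.

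Next, the spectral sequence $H^p(G_K,H^q(X_{\bar{\eta}},\mathbb{G}_m))\Rightarrow H^{p+q}(X_\eta,\mathbb{G}_m)$ yields a five-term exact sequence whose relevant portion is
\[ Br(K)\To \ker\bigl(Br(X_\eta)\To Br(X_{\bar{\eta}})\bigr)\To H^1(K,Pic(X_{\bar{\eta}}))\To H^3(K,\mathbb{G}_m), \]
and a parallel sequence over $k(\theta)$. Combined with the natural map $Br(X_\eta)\To Br(X_{\bar{\eta}})^{G_K}$, this presents $Br(X_\eta)/Br(K)$ as an extension of a subgroup of the finite group $Br(X_{\bar{\eta}})^{G_K}$ by a subquotient of the finite group $H^1(K,Pic(X_{\bar{\eta}}))$, and similarly for $\theta$.

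Now the $G_K$-action on $Pic(X_{\bar{\eta}})$ and on $Br(X_{\bar{\eta}})$ factors through a common finite quotient $\mathrm{Gal}(L/K)$ for a suitable finite Galois extension $L/K$. A regular \'etale model of $L$ above a shrinking of $U$ defines a generalized Hilbertian subset $\textsf{Hil}\subset\mathbb{P}^1$; for $\theta\in\textsf{Hil}$, the residue extension $L_\theta/k(\theta)$ satisfies $\mathrm{Gal}(L_\theta/k(\theta))\cong\mathrm{Gal}(L/K)$, and the $G_{k(\theta)}$-actions on $Pic(X_{\bar{\theta}})$ and $Br(X_{\bar{\theta}})$ factor through this isomorphic quotient compatibly with the specialization isomorphisms of step~1. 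A diagram chase comparing the two five-term exact sequences then produces the desired isomorphism $sp_\theta$. The main obstacle is the \emph{transcendental} piece, namely showing that the specialization identifies the image of $Br(X_\eta)$ in $Br(X_{\bar{\eta}})^{G_K}$ with the image of $Br(X_\theta)$ in $Br(X_{\bar{\theta}})^{G_{k(\theta)}}$; this is not formal from the five-term sequence alone and requires, following Harari's treatment in \cite[\S3.3]{Harari}, a proper/smooth base change argument at $\theta$ on a model of the fibration, controlled by the finiteness of $Br(X_{\bar{\eta}})$.
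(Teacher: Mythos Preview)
Your outline is precisely Harari's argument, and that is all the paper offers as proof: it simply observes that the proofs of \cite[Th\'eor\`eme~3.5.1]{Harari} (case $Br(X_{\bar\eta})=0$) and \cite[Th\'eor\`eme~2.3.1]{Harari2} (finite $Br(X_{\bar\eta})$) carry over verbatim when $\theta$ is a closed point rather than a $k$-rational point. Two minor corrections to your sketch: the appeal to Lemma~\ref{Pic-tors-Br-fini} is misplaced, since that lemma assumes rational connectedness --- here finite generation of $Pic(X_{\bar\eta})$ follows instead directly from the torsion-freeness hypothesis, which forces $Pic^0(X_{\bar\eta})=0$ and hence $Pic=NS$; and your ``diagram chase'' glosses over the term $H^3(K,\mathbb{G}_m)$, which vanishes for the number field $k(\theta)$ but not a priori for $K=k(\mathbb{P}^1)$, so the comparison of the algebraic pieces is not quite as formal as you suggest and is among the points that Harari's argument in \cite[\S3]{Harari} actually has to handle.
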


\begin{proof}
Les preuves du théorème 3.5.1 de \cite{Harari} et du théorème 2.3.1 de \cite{Harari2} fonctionnent bien
pour un point fermé $\theta$ au lieu d'un point $k$-rationnel.
Dans le théorème 3.5.1 de \cite{Harari}, on suppose que $Br(X_{\bar{\eta}})$ est nul, tandis que dans l'article
\cite{Harari2}, le théorème concerné est renforcé  avec l'hypothèse que $Br(X_{\bar{\eta}})$ est fini.
\end{proof}

\begin{rem}
Si le groupe $Br(X_{\bar{\eta}})$ n'est pas fini, la cohomologie $H^2(X_{\bar{\eta}},\mathcal{O}_{X_{\bar{\eta}}})$
est non nulle, \textit{cf.} \cite[II. Cor.3.4]{Br}.
À cause de ceci, on ne peut pas comparer (via le résultat de Grothendieck sur les faisceaux inversibles)
les groupes de Picard de la fibre générique géométrique et des fibres spéciales géométriques.
Par conséquent, on ne peut pas contrôler la partie algébrique des groupes de Brauer,
même si la partie invariante par l'action galoisienne de $Br(X_{\bar{\eta}})$ est finie.

Par exemple, si la fibre générique $X_\eta$ est une surface K3, même si le groupe $Br(X_\eta)/Br(K)$ est fini
\cite[Theorem 1.2]{SkorobogatovZarhinK3}, on ne sait pas si la flèche
$sp_\theta$ est surjective pour un certain $\theta.$
\end{rem}

\subsection{Cas où les fibres sont géométriquement intègres}\label{subsection2}
On suppose que toutes les fibres de la fibration considérée $X\to\P^1$ contiennent une composante
irréductible de multiplicité un qui est géométriquement intègre.

On suit la stratégie de la preuve du théorème 2 de Harari \cite{Harari3}.
D'abord, on montre la proposition cruciale suivante
qui est parallèle à la proposition 1 de \cite{Harari3}. Ensuite, on déduit le théorème \ref{thm10}.

\begin{prop}\label{lem-complicated}
Soit $\pi:X\To\mathbb{P}^1$ une fibration.
On suppose que toute fibre contient une composante irréductible de multiplicité un qui est géométriquement intègre.
On suppose que $U$ est un ouvert non vide de $X$ et $\Lambda\subset Br(U)$ un sous-ensemble fini d'éléments
de $Br(k(X)).$ On fixe un $k$-point $\infty$ de $V=\pi(U)$ tel que la fibre $X_\infty$ soit lisse et géométriquement
intègre, on note  $U_0=U\setminus X_\infty$ et $V_0=V\setminus\{\infty\}.$

Alors, pour un entier positif $d$ fixé, il existe un ensemble fini $S\subset\Omega_k,$ un
ensemble infini $\Sigma\subset\Omega_k,$ et un ensemble fini
$E$ d'éléments de $Br(K)\cap Br(U_0)\subset Br(k(X))$ tels que
si $S'\supset S$ est un ensemble fini de places de $k,$ il existe un nombre fini de places
$v_1,\ldots,v_l,v_\infty$ hors de $S'$ avec $v_\infty\in\Sigma$ et
$\theta_i\in Z_0(\mathbb{P}^1_{v_i})(i\in\{1,\ldots,l\}\cup\{\infty\})$ zéro-cycles
effectifs de degré $d$ ayant la propriété suivante:

si $\theta$ est un point fermé de $V_0\subset\mathbb{A}^1=\mathbb{P}^1\setminus\{\infty\}$ de degré $d,$
suffisamment proche de chaque
$\theta_i(i\in\{1,\ldots,l\}\cup\{\infty\})$ comme zéro-cycles locaux, entier en dehors
de $S'\cup\{v_1,\ldots,v_l,v_\infty\}\cup\Sigma,$
et de plus si la fibre $X_\theta\cap U_0$ possède des $k(\theta)_w$-points lisses $M_w$ pour
toute $w\in S'\otimes_kk(\theta),$ vérifiant $\sum_{w\in S'\otimes_kk(\theta)}inv_w(A(M_w))=0$ pour
$A\in\Lambda\cup E,$ alors $X_\theta\cap U_0$ possède des $k(\theta)_w$-points lisses $M_w$
pour $w\in (\Omega\setminus S')\otimes_kk(\theta)$ vérifiant
$$\sum_{w\in \Omega_{k(\theta)}}inv_w(A(M_w))=0\mbox{ pour }A\in\Lambda\cup E.$$
\end{prop}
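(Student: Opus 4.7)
The proof plan is to adapt Harari's argument for \cite[Proposition 1]{Harari3} from the setting of rational points to that of zero-cycles of degree $d$, the main new technicality being that a closed point $\theta$ of degree $d$ has residue field $k(\theta)$ of degree $d$ over $k$, so one must carefully track places $w\in\Omega_{k(\theta)}$ lying above places $v\in\Omega_k$.

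First, I would enlarge $S$ so that outside $S$: all places are non-archimedean and unramified in the Galois closure of $k(X)/k(\mathbb{P}^1)$; each $A\in\Lambda$ is ``unramified'' on an integral model of $U$; the morphism $\pi$, the opens $U$, $U_0$, and the divisor $X_\infty$ all have well-behaved integral models; and the generic-fiber Brauer group is controlled. The finiteness of $Br(X_{\bar\eta})$ together with the torsion-freeness of $Pic(X_{\bar\eta})$ yields via Proposition \ref{Br-specialisation} a generalized Hilbertian subset of $\mathbb{P}^1$ on which the specialization map $sp_\theta:Br(X_\eta)/Br(K)\to Br(X_\theta)/Br(k(\theta))$ is an isomorphism of finite groups; I would restrict attention to $\theta$ lying in this subset (and in $V_0$) of degree $d$.

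Second, I would construct the set $E$ of ``vertical'' corrections. For each codimension-$1$ point $\theta'\in\mathbb{P}^1\setminus\{\infty\}$ at which some $A\in\Lambda$ has a nontrivial residue along the fiber $X_{\theta'}$, the hypothesis that $X_{\theta'}$ contains a geometrically integral component of multiplicity one forces that residue, seen as an element of $H^1(k(\theta'),\mathbb{Q}/\mathbb{Z})$, to come from a character that can be realized by a cyclic algebra in $Br(k(\mathbb{P}^1))$ unramified away from $\theta'$ and $\infty$. I take $E$ to be these cyclic algebras, which by construction lie in $Br(K)\cap Br(U_0)$. Then for a closed point $\theta$ of $V_0$ the contribution of $E$ to the Brauer--Manin sum on $X_\theta$ depends only on $\theta$ (seen as a zero-cycle on $\mathbb{P}^1$), not on the choice of local points $M_w$.

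Third, I would apply the formal lemma \ref{lemme formel} to the finite set $\Lambda\cup E\subset Br(U_0)$ and a family $\{z_v\}$ of local zero-cycles of degree $d$ on $U_0$ coming from the hypothesis implicit in the target theorem; this produces the finite set $\{v_1,\ldots,v_l\}$ of auxiliary places outside $S'$ together with local zero-cycles $\theta_i$ of degree $d$ on $\mathbb{P}^1_{v_i}$ whose push-forwards cancel the obstruction from $\Lambda\cup E$ at these places. The infinite set $\Sigma$ is produced by a Chebotarev argument, guaranteeing a positive-density set of places where the fiber of $\pi$ has good reduction and admits local zero-cycles of degree $d$ on a smooth geometrically integral component; the single place $v_\infty\in\Sigma$ absorbs a residual global contribution.

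Finally, I would verify the conclusion for any $\theta$ close to each $\theta_i$ and integral outside $S'\cup\{v_1,\ldots,v_l,v_\infty\}\cup\Sigma$. The sum $\sum_{w\in\Omega_{k(\theta)}}inv_w(A(M_w))$ is grouped by the place $v$ of $k$ below $w$: at $v\in S'$ the hypothesis directly kills the $\Lambda$-contribution and the $E$-contribution vanishes by the base-dependence observation; at $v\in\{v_1,\ldots,v_l,v_\infty\}$ the continuity of the Brauer--Manin pairing together with the output of the formal lemma forces cancellation because $\theta$ is close to $\theta_i$; at all remaining $v$ the integrality of $\theta$ combined with good reduction of $U_0$ and the unramifiedness of $A$ forces each local term to vanish. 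The main obstacle is, as expected, the rigorous comparison between the specialization $sp_\theta(A)$ evaluated place by place on $X_\theta$ and the ``base'' evaluation of $A$ viewed as a function of $\theta$; establishing this identity for zero-cycles (rather than rational points) requires a careful corestriction argument from $k(\theta)$ down to $k$ that is the real heart of the proof.
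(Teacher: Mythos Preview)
Your proposal conflates this proposition with the theorem that uses it. The formal lemma \ref{lemme formel} and the specialisation isomorphism of Proposition \ref{Br-specialisation} are invoked in the proof of Theorem \ref{thm10}, \emph{not} here; this proposition is the input that makes those tools effective. In particular, the places $v_1,\ldots,v_l$ are not produced by the formal lemma: the integer $l$ is the number of bad fibres $X_{m_1},\ldots,X_{m_l}$ left out of $U_0$, and each $v_i$ is chosen by \v{C}ebotarev to be totally split in a specific field $k'_i/k$ attached to the fibre $X_{m_i}$.

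More seriously, your construction of $E$ rests on a false claim. The residue $\partial_{A,i}$ of $A\in\Lambda$ along the chosen multiplicity-one geometrically integral component $X_{m_i}^{irr}$ lies in $H^1(K_i,\mathbb{Q}/\mathbb{Z})$ where $K_i=k(X_{m_i}^{irr})$; geometric integrality only says $k_i=k(m_i)$ is algebraically closed in $K_i$, it does \emph{not} force $\partial_{A,i}$ to come from $H^1(k_i,\mathbb{Q}/\mathbb{Z})$. In the paper, $\partial_{A,i}$ cuts out a finite abelian cover $K'_i/K_i$, and one separates the constant part $k'_i/k_i$ from the geometric part $K'_i/K_ik'_i$. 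The set $E$ consists of elements of $Br(k(T))$ ramified only at $m_i$ with residue in the \emph{constant} subgroup $H^1(\mathrm{Gal}(k'_i/k_i),\mathbb{Q}/\mathbb{Z})$; the set $\Sigma$ is the set of places totally split in a field $k'$ killing all of $E$.

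Because of this, your cancellation mechanism at the bad places is incomplete. After lifting Lang--Weil points by Hensel to get $M_w$ for $w\notin S'\otimes_kk(\theta)$, Harari's residue formula gives $inv_w(A(M_w))=w(P_i(\theta))\cdot\partial_{A,i}(F_{i,M(w)})$ for $w$ in the packet $\Omega_i$ reducing into $\widetilde{m_i}$, where $F_{i,M(w)}$ is the Frobenius at the reduced point. The elements of $E$, together with global reciprocity over $k(\theta)$, show that $\sum_{w\in\Omega_i}w(P_i(\theta))F_{i,M(w)}$ lies in the geometric subgroup $\mathrm{Gal}(K'_i/K_ik'_i)$. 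The decisive step you are missing is then to \emph{modify} one local point: at the distinguished place $w_i^0\mid v_i$ (where $w_i^0(P_i(\theta))=1$ because $\theta$ is close to $\theta_i$), one uses the geometric \v{C}ebotarev theorem of Ekedahl to replace $M(w_i^0)$ by a point of $\mathcal{U}_i$ whose Frobenius is prescribed in $\mathrm{Gal}(K'_i/K_ik'_i)$ so as to annihilate the whole $\Omega_i$-sum. Without this modification, continuity alone cannot force $\sum_{w\in\Omega_i}inv_w(A(M_w))=0$, and your argument does not close.
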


\begin{rem}
Dans cette proposition, les mêmes $S,$ $E,$ et $\Sigma$ fonctionnent pour tout entier positif $d,$ même si
l'on va fixer un entier positif $d$  avant l'application de cette proposition dans la preuve du théorème \ref{thm10}.
\end{rem}

\begin{proof}
On note $Z=X\setminus U=Z_1\cup Z_2$ où les composantes irréductibles du fermé $Z_1$ dominent $\mathbb{P}^1$
tandis que $Z_2$ est contenu dans la réunion d'un ensemble fini de fibres $X_{m_i}(1\leqslant i\leqslant l)$
et on peut supposer que $Z_2=\bigsqcup_{i=1}^lX_{m_i}.$ En restreignant $U$ si nécessaire, on peut supposer
que pour tout $\theta\in\mathbb{P}^1$ différent de chaque $m_i,$ la fibre $X_\theta$ est lisse et géométriquement
intègre.
On sait alors que le point $\infty\in\mathbb{P}^1$ est différent de chaque $m_i(i\in\{1,\ldots,l\}),$ on écrit
$\mathbb{A}^1=\mathbb{P}^1\setminus\{\infty\}=Spec(k[T]).$ Pour $i\in\{1,\ldots,l\},$
on note $P_i(T)$ le polynôme irréductible unitaire définissant
le point fermé $m_i\in\mathbb{A}^1,$ et $k_i=k(m_i)=k[T]/P_i(T)$ le corps résiduel de
$m_i.$  La fibre $X_{m_i}$  a une composante irréductible de multiplicité un qui est géométriquement intègre,
on fixe une telle composante $X^{irr}_{m_i}$  et on note $K_i$ son corps des fonctions.
Le corps $k_i$ est alors algébriquement fermé dans
$K_i.$
Pour $A\in Br(k(X)),$ on note $\partial_{A,i}$ son image par l'application de résidu
$Br(k(X))\To H^1(K_i,\mathbb{Q}/\mathbb{Z}).$ Les éléments $\partial_{A,i}(A\in\Lambda)$ engendrent un sous-groupe
fini abélien de la forme $G_i=H^1(Gal(K'_i/K_i),\mathbb{Q}/\mathbb{Z})$ où $K'_i$ est une extension finie abélienne
de $K_i.$ On note $k'_i$ la fermeture intégrale de $k_i$ dans $K'_i.$
On dispose du sous-groupe $G'_i=H^1(Gal(K_ik'_i/K_i),\mathbb{Q}/\mathbb{Z})\simeq H^1(Gal(k'_i/k_i),\mathbb{Q}/\mathbb{Z})$
de $G_i.$ On note également $K_\infty$ le corps des fonctions de la fibre $X_\infty$ et $k_\infty=k.$

\medskip
\emph{Étape 1: Construction de l'ensemble $E$ et $\Sigma.$}

On considère la suite exacte de Faddeev (\emph{cf.} \cite[Corollary 6.4.6]{GilleSzamuely})
$$0\To Br(k)\To Br(k(T))\buildrel{\partial_\theta}\over\To\bigoplus_{\theta\in\mathbb{A}^{1(1)}} H^1(k(\theta),\mathbb{Q}/\mathbb{Z})\To0$$
où $\mathbb{A}^{1(1)}$ est l'ensemble des points fermés de $\mathbb{A}^1.$
Pour $i\in\{1,\ldots,l\},$ on définit un sous-groupe
$$E'_i=\{\beta\in Br(k(T));\partial_\theta(\beta)=0\mbox{ si }\theta\neq m_i\mbox{ et }\partial_{m_i}(\beta)\in G'_i=H^1(Gal(k'_i/k_i),\mathbb{Q}/\mathbb{Z})\}.$$
Le groupe $E'_i/Br(k)$ est alors fini, on prend un ensemble fini $E_i\subset Br(k(T))$ de représentants,
et on note $E=\bigsqcup_{i=1}^lE_i.$
Son image dans $Br(k(X))$ est contenue dans $Br(U_0).$

D'après le théorème de Tsen, le groupe $Br(\bar{k}(T))$ est nul,
il existe donc une extension finie $k'$ de $k$ telle que les restrictions des éléments de $E$ dans $Br(k'(T))$
soient nulles. On définit $\Sigma$ comme l'ensemble des places non-archimédiennes de $k$ qui sont
totalement décomposées dans $k',$ c'est un ensemble infini d'après le théorème de \v{C}ebotarev.

\medskip
\emph{Étape 2: Extension aux modèles entiers.}

On peut trouver un modèle entier
$\mathcal{U}_0$ (resp. $\mathcal{U},$ $\mathcal{V}_0,$ $\mathcal{V},$ $\mathcal{X},$ $\mathcal{X}_{m_i},$ $\mathcal{X}_\infty,$ et
$\mathcal{Z}_1$) de $U_0$
(resp. $U,$ $V_0,$ $V,$ $X,$  $X_{m_i}^{irr},$ $X_\infty,$ et $Z_1$)
sur $\mathcal{W}_1=Spec(O_{k,S_1})$ avec
l'ensemble fini $S_1$ contenant toutes les places archimédiennes et toutes les places ramifiées dans les
extensions $k'_i/k.$
On peut trouver pour chaque $i\in\{1,\ldots,l\}$ un ouvert lisse $\mathcal{U}_i$ de
$\mathcal{X}_{m_i}$  disjoint
de $\mathcal{Z}_1,$ tel que les éléments de $H^1(Gal(K'_i/K_i),\mathbb{Q}/\mathbb{Z})$ soient dans
$H^1_{\mbox{\scriptsize\'et}}(\mathcal{U}_i,\mathbb{Q}/\mathbb{Z}).$
On trouve aussi $\mathcal{U}_\infty$ un ouvert non vide de $\mathcal{X}_{\infty}$ disjoint de $\mathcal{Z}_1,$
tel que les résidus $\partial_{A,\infty}$ des éléments $A\in\Lambda\cup E\subset Br(k(X))$
dans $H^1(K_\infty,\mathbb{Q}/\mathbb{Z})$ soient dans
$H^1_{\mbox{\scriptsize\'et}}(\mathcal{U}_\infty,\mathbb{Q}/\mathbb{Z}).$
On note $\widetilde{m_i}$ (resp. $\widetilde{\infty}$) l'adhérence schématique de $m_i$ (resp. $\infty$) dans
$\mathbb{P}^1_{\mathcal{W}_1},$ quitte à augmenter $S_1,$ ces fermés et l'ouvert
$\mathcal{V}_0$ forment une partition de $\mathbb{P}^1_{\mathcal{W}_1}.$
On peut supposer également, quitte à augmenter $S_1,$ que
\begin{itemize}
\item
$\mathcal{X},$ $\mathcal{X}_\infty,$ et $\mathcal{U}_i$ sont lisses sur $\mathcal{W}_1,$
\item
$\mathcal{X}$ et $\mathcal{Z}_1$ sont plats sur $\mathbb{P}^1_{\mathcal{W}_1},$
\item
$\mathcal{X}_{m_i}$ et toutes les composantes irréductibles de $\mathcal{X}_{m_i}\setminus\mathcal{U}_i$
(resp. $\mathcal{X}_\infty\setminus\mathcal{U}_\infty$)
sont plates sur $\widetilde{m_i}$ (resp. $\widetilde{\infty}$),
\item
toutes les fibres
de $\mathcal{U}_0\to\mathcal{V}_0,$ de $\mathcal{U}_i\to\widetilde{m_i},$ et de
$\mathcal{X}_\infty\to\widetilde{\infty}$ sont géométriquement intègres,
\item
les réductions modulo
$v\notin S_1$ des fibres $\mathcal{X}_{m_i}$ et $\mathcal{X}_\infty$
sont deux à deux disjointes, avec en plus $\mathcal{X}_{m_i}\cap \mathcal{U}_0=\emptyset$ et
$\mathcal{X}_\infty\cap \mathcal{U}_0=\emptyset,$
\item
les éléments de $\Lambda\cup E\subset Br(U_0)$ s'étendent en des élément de $Br(\mathcal{U}_0),$
\item
le polynôme $P_i(T)$ est à coefficients dans $O_{k,S_1},$ et que sa réduction modulo $v\notin S_1$ est un polynôme
séparable.
\end{itemize}

On peut aussi supposer qu'il existe un revêtement étale fini galoisien (connexe) $\mathcal{Y}_i$ de $\mathcal{U}_i$
de groupe $Gal(K'_i/K_i),$ qui se factorise par un revêtement étale fini galoisien $\mathcal{Y}_i\To\mathcal{Y}'_i$
de groupe $Gal(K'_i/K_ik'_i).$ La fibre générique de $\mathcal{Y}_i$ est une variété géométriquement intègre
sur $k'_i.$

Le théorème de \v{C}ebotarev géométrique \cite[Lemma 1.2]{Ekedahl} dit que, pour tout élément
$\sigma\in Gal(K'_i/K_ik'_i)$ et pour presque toute place $v'_i$ de $k'_i,$
le schéma $\mathcal{Y}'_i$ possède des $k'_i(v'_i)$-point $Q(v'_i)$ tels que le Frobenius en $Q(v'_i)$ soit $\sigma,$
on peut supposer que ceci vaut pour toute $v'_i\in\Omega_{k'_i}\setminus S_1\otimes_kk'_i.$

\medskip
\emph{Étape 3: Construction de l'ensemble $S.$}

Comme toute fibre contient une composante irréductible de multiplicité un qui est géométriquement intègre,
l'estimation de Lang-Weil (\textit{cf.} \cite{Lang-Weil}, \cite[Lemme 3.3]{Liang1}) appliquée à une famille plate
dit qu'il existe un ensemble fini $S\supset S_1$ de places de $k$
(et on pose $\mathcal{W}=Spec(O_{k,S})$) contenant toutes les places archimédiennes
tel que
\begin{itemize}
\item[-] si $\theta\in V_0$ est un point fermé et si $\theta$ est un entier en une place
        $w^o\in\Omega_{k(\theta)}\setminus S\otimes_kk(\theta),$
        alors $\widetilde{\mathcal{X}_\theta}^{w^o}$ (la réduction modulo $w^o$ de $X_\theta$)
        possède un $k(\theta)(w^o)$-point hors de $\mathcal{Z}_1.$
\item[-] si $i\in\{1,\ldots,l\}$ et si $v^o_i\in\Omega_{k_i}\setminus S\otimes_kk_i,$ alors $\widetilde{\mathcal{U}_i}^{v^o_i}$
        (la réduction modulo $v^o_i$ de $\mathcal{U}_i$) possède un $k_i(v^o_i)$-point.
\item[-] si $v^o_\infty\in\Omega_k\setminus S,$ alors $\widetilde{\mathcal{U}_\infty}^{v^o_\infty}$
        (la réduction modulo $v^o_\infty$ de $\mathcal{U}_\infty$) possède un $k(v^o_\infty)$-point.
\end{itemize}

\medskip
Maintenant, soit $S'\subset\Omega_k$ un ensemble fini contenant $S.$

\medskip
\emph{Étape 4: Construction des $v_i$ et $\theta_i.$}

D'après le théorème de \v{C}ebotarev, on choisit
des places $v_i(i\in\{1,\ldots,l\})$ deux à deux distinctes, hors de $S',$ et totalement décomposées
dans $k'_i/k.$
Le polynôme $P_i(T)=0$ a alors une racine simple dans $k(v_i)$ se relevant en un élément
$\theta_i^{(1)}$ de $k_{v_i}=\mathbb{A}^1(k_{v_i})$ vérifiant $v_i(P_i(\theta_i^{(1)}))=1.$
On fixe $\theta_i^{(2)}$ un point fermé de $\mathbb{A}^1_{v_i}$ de degré $d-1$ différent de $\theta_i^{(1)}$
(sur un corps $p$-adique, il existe un nombre infini de polynômes irréductibles de degré fixé, \textit{cf.}
\cite{SerreCorpsLoc}, Proposition 17 et son corollaire).
On définit $\theta_i=\theta_i^{(1)}+\theta_i^{(2)}\in Z_0(\mathbb{P}^1_{v_i})$ un zéro-cycle local de degré $d.$

De plus on prend $v_\infty\in\Sigma\setminus S$ distincte des $v_i$ ci-dessus. Enfin, on choisit
$\theta_\infty^{(1)}\in k^*_{v_\infty}$ tel que $v_\infty(1/\theta_\infty)=1$ et construit
$\theta_\infty=\theta_\infty^{(1)}+\theta_\infty^{(2)}\in Z_0(\mathbb{P}^1_{v_\infty})$ de degré $d$ comme précédemment.

\medskip
Dans le reste de la preuve, on vérifiera que les données qu'on a construites satisfont l'assertion.

Soit $\theta\in V_0\subset\mathbb{A}^{1}$ un point fermé de degré $d,$ on suppose qu'il est entier en dehors
de $S'\cup\{v_1,\ldots,v_l,v_\infty\}\cup\Sigma$ et qu'il est suffisamment proche de chaque $\theta_i(i\in\{1,\ldots,l\}\cup\{\infty\})$
comme zéro-cycles locaux.

\medskip
\emph{Étape 5: Classification des places de $k(\theta).$}

Le point fermé $\theta$ est suffisamment proche de $\theta_i(i\in\{1,\ldots,l\}),$ par définition
il existe donc une place $w^0_i$ de $k(\theta)$ au-dessus de $v_i$ telle que les extensions $k(\theta)_{w_i^0}/k_{v_i}$
et $k(\theta)(w_i^0)/k(v_i)$ soient triviales et l'image de $\theta\in k(\theta)=\mathbb{A}^1(k(\theta))$
dans $k(\theta)_{w_i^0}$ soit suffisamment proche
de $\theta_i^{(1)}.$
Donc $w_i^0(P_i(\theta))=v_i(P_i(\theta))=v_i(P_i(\theta_i^{(1)}))=1,$ a fortiori $w_i^0(\theta)\geqslant 0$
car les coefficient de $P_i$ sont $v_i$-entiers. Le point $\theta$ est alors un $v_i$-entier pour $i\in\{1,\ldots,l\}.$
Comme $v_\infty\in\Sigma,$ le point $\theta$ est alors $S'\cup\Sigma$-entier.
De même, il existe une place $w_\infty^0$ de $k(\theta)$ au-dessus de $v_\infty$ tel que
$w_\infty^0(1/\theta)=1.$

On considère la réduction de $\theta\in V_0\subset\mathbb{P}^1$ modulo une place
$w\in\Omega_{k(\theta)}\setminus S'\otimes_kk(\theta),$
il y a trois possibilités:
\begin{itemize}
\item[(a)] elle est dans $\mathcal{V}_0,$ si $w(P_i(\theta))=0$ (a fortiori $w(\theta)\geqslant 0$),
        on note $\Omega_0$ l'ensemble de ces places;
\item[(b)] elle est dans l'un (unique) des  $\widetilde{m_i},$ si $w(P_i(\theta))>0$ (a fortiori $w(\theta)\geqslant 0$),
        on note $\Omega_i$ l'ensemble de ces places, en particulier $w_i^0\in\Omega_i;$
\item[(c)] elle est dans  $\widetilde{\infty}$  si $w(\theta)<0$,
        on note $\Omega_\infty$ l'ensemble de ces places, 
        de plus on a $\Omega_\infty\subset\Sigma\otimes_kk(\theta)$ car $\theta$ est un $S'\cup\Sigma$-entier.
\end{itemize}

Les ensemble $\Omega_0,\Omega_1,\ldots,\Omega_l, \Omega_\infty$ forment une partition de
$\Omega_{k(\theta)}\setminus S'\otimes_kk(\theta).$ L'ensemble $\Omega_i$ est fini pour
$i\in\{1,\ldots,l\}\cup\{\infty\}.$

\medskip
\emph{Étape 6: Existence des points locaux de $X_\theta\cap U_0.$}

On se donne pour chaque $w\in S'\otimes_kk(\theta)$ un $k(\theta)_w$-point $M_w$ de $X_\theta\cap U_0$
avec l'égalité $$\sum_{w\in S'\otimes_kk(\theta)}inv_w(A(M_w))=0$$ pour
$A\in\Lambda\cup E.$

D'après la construction de $S,$ dans les trois cas (a), (b), (c),
le schéma $\widetilde{\mathcal{X}_\theta}^w$ possède un $k(\theta)(w)$-point
$M(w)$ qui se relève par le lemme de Hensel en un $k(\theta)_w$-point $M_w$ de $U_0\cap X_\theta$
pour toute $w\in\Omega_{k(\theta)}\setminus S'\otimes_kk(\theta).$

En général, la famille $\{M_w\}_{w\in\Omega_{k(\theta)}}$
n'est pas orthogonale à $A\in\Lambda\cup E.$ Dans la suite, on va modifier certains $M_w$
pour obtenir l'orthogonalité.

\medskip
\emph{Étape 7: Calculs des $A(M_w).$}

On veut évaluer $A(M_w)$ pour $A\in\Lambda\cup E\subset Br(U_0)$ et $w\in\Omega_{k(\theta)}\setminus S'\otimes_kk(\theta).$
Par construction, le point $M_w$ est dans $U_0$ et sa réduction $M(w)$ est dans
l'un (unique) des $\mathcal{U}_i$ ($i\in\{0,1,\ldots,l\}\cup\{\infty\}$)
selon $w\in\Omega_i.$

D'après \cite[Corollaire 2.4.3]{Harari}, on trouve
\begin{itemize}
\item[-] $inv_w(A(M_w))=0$ si $w\in\Omega_0;$
\item[-] $inv_w(A(M_w))=w(P_i(\theta))\cdot\partial_{A,i,M(w)}=w(P_i(\theta))\cdot\partial_{A,i}(F_{i,M(w)})$
        si $w\in\Omega_i(i\in\{1,\ldots,l\}),$ où $\partial_{A,i,M(w)}$
        est l'évaluation de $\partial_{A,i}\in H^1_{\mbox{\scriptsize\'et}}(\mathcal{U}_i,\mathbb{Q}/\mathbb{Z})$
        en le point $M(w)$ de $\mathcal{U}_i,$ et où $F_{i,M(w)}\in Gal(\mathcal{Y}_i/\mathcal{U}_i)$
        est le Frobenius en $M(w);$
\item[-] $inv_w(A(M_w))=w(1/\theta)\cdot\partial_{A,\infty,M(w)}$ si $w\in\Omega_\infty,$ où $\partial_{A,\infty,M(w)}$
        est l'évaluation de $\partial_{A,\infty}\in H^1_{\mbox{\scriptsize\'et}}(\mathcal{U}_\infty,\mathbb{Q}/\mathbb{Z})$
        en le point $M(w)$ de $\mathcal{U}_\infty.$
\end{itemize}

\medskip
\emph{Étape 8: Calculs pour $w\in\Omega_\infty.$}

Si $A\in\Lambda\subset Br(U),$ comme le point $\infty\in\pi(U),$
le résidu $\partial_{A,\infty}$ est nul dans $H^1(K_\infty,\mathbb{Q}/\mathbb{Z}).$ Si $A\in E,$ il
provient d'un élément de $Br(k(T))$ qui devient nul dans $Br(k'(T))$ par construction. De plus,
la restriction $v$ d'une place $w\in\Omega_\infty$ au corps $k$ appartient dans $\Sigma,$
d'où $k(\theta)_w$ est une extension de $k_v=k'_{v'}$ pour toute place $v'$ de $k'$ au-dessus de $v,$
donc $A$ est nul dans $Br(k(\theta)_w(T)).$ En tout cas, on a $inv_w(A(M_w))=0$ pour $w\in\Omega_\infty$
et $A\in\Lambda\cup E.$

\medskip
\emph{Étape 9: Une observation pour $w\in\Omega_i(i\in\{1,\ldots,l\}).$}

A priori, l'élément $\sum_{w\in\Omega_i}w(P_i(\theta))\cdot F_{i,M(w)}$ se trouve dans le groupe
$Gal(K'_i/K_i),$ on va montrer que cet élément est dans le sous-groupe $Gal(K'_i/K_ik'_i).$

Soit $\rho_i$ un élément quelconque du groupe
$G'_i=H^1(Gal(K_ik'_i/K_i),\mathbb{Q}/\mathbb{Z})\subset G_i,$ il existe alors un élément $A_i$
de $E\subset Br(k(T))$ dont le seul résidu possiblement non nul est en $m_i$ et vaut $\rho_i.$
Dans ce cas, on a $inv_w(A_i(M_w))=0$ pour toute $w\in\Omega_j$ si $1\leqslant j\leqslant l$ et $j\neq i.$
D'autre part, comme la spécialisation de $A_i\in Br(k(X))$ en $X_\theta$ provient de $Br(k(\theta)),$
on trouve l'égalité
$$\sum_{w\in\Omega_{k(\theta)}}inv_w(A_i(M_w))=0.$$
On obtient alors
$$\sum_{w\in\Omega_i}inv_w(A_i(M_w))=0,$$
autrement dit,
$$\sum_{w\in\Omega_i}\rho_i\left(w(P_i(\theta))\cdot F_{i,M(w)}\right)=0.$$
Donc l'élément $\sum_{w\in\Omega_i}w(P_i(\theta))\cdot F_{i,M(w)}$ se trouve dans le
sous-groupe $Gal(K'_i/K_ik'_i).$

\medskip
\emph{Étape 10: Modification des $M_w$ pour $w\in\Omega_i(i\in\{1,\ldots,l\}).$}

On rappelle qu'il existe une place $w^0_i\in\Omega_i$ au-dessus de $v_i$ avec $w^0_i(P_i(\theta))=1,$ de plus
les extensions $k(\theta)_{w_i^0}/k_{v_i}$ et $k(\theta)(w_i^0)/k(v_i)$ sont triviales.
Comme $v_i$ est totalement décomposée dans $k'_i,$ le Frobenius
$F_{i,M(w^0_i)}$ se trouve dans le sous-groupe $Gal(K'_i/K_ik'_i).$

D'après le théorème de \v{C}ebotarev géométrique \cite[Lemma 1.2]{Ekedahl}, il existe
un point $M'(w^0_i)$ de $\mathcal{U}_i$ dont le Frobenius associé $F_{i,M'(w^0_i)}$ est exactement l'élément
$F_{i,M(w^0_i)}-\sum_{w\in\Omega_i}w(P_i(\theta))\cdot F_{i,M(w)}\in Gal(K'_i/K_ik'_i).$
On relève ce point en un $k(\theta)_{w^0_i}$-point $M'_{w^0_i}$ de $X_\theta\cap U_0.$

Pour chaque $i\in\{1,\ldots,l\},$ on remplace $M_{w^0_i}$ par $M'_{w^0_i}$ et on garde $M_w$ pour
$w\in\Omega_i\setminus\{w^0_i\}.$
On vérifie alors que
$$\sum_{w\in\Omega_i}inv_w(A(M_w))=0$$
pour $A\in\Lambda\cup E$ et $i\in\{1,\ldots,l\},$ donc
$$\sum_{w\in\Omega_{k(\theta)}}inv_w(A(M_w))=0$$
pour tout $A\in\Lambda\cup E.$ Ceci termine la preuve.
\end{proof}

\begin{thm}\label{thm10}
Soit $\pi:X\To\mathbb{P}^1$ une fibration.
On suppose que toute fibre contient une composante irréductible de multiplicité un qui est géométriquement intègre.
On suppose que $Br(X_{\bar{\eta}})$ est fini et que $Pic X_{\bar{\eta}}$ est sans torsion.

Supposons qu'il existe un sous-ensemble hilbertien généralisé $\textsf{Hil}$ de $\mathbb{P}^1$
tel que pour tout $\theta\in \textsf{Hil},$ respectivement,

(i) l'obstruction de Brauer-Manin au principe de Hasse est la seule pour
les points rationnels ou pour les zéro-cycles de degré $1$ sur $X_\theta;$

(ii) l'obstruction de Brauer-Manin est la seule à l'approximation faible pour
les points rationnels ou pour les
zéro-cycles de degré $1$ sur $X_\theta;$

(iii) on suppose (ii), et de plus, (H CH0) l'application $CH_0(X_v)\To CH_0(\mathbb{P}^1_v)$ est
un isomorphisme pour presque toute place $v\in\Omega,$
par exemple, la fibre générique $X_\eta$ est une $k(\mathbb{P}^1)$-variété rationnellement connexe.

Alors, respectivement,

(i) l'obstruction de Brauer-Manin est la seule
au principe de Hasse pour les zéro-cycles de degré $1$ sur $X;$

(ii) l'obstruction de Brauer-Manin est la seule
à l'approximation faible pour les
zéro-cycles de degré $1$ sur $X;$

(iii) l'obstruction de Brauer-Manin est la seule
à l'approximation forte pour les
zéro-cycles de degré $1$ sur $X.$
\end{thm}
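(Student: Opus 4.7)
Le plan est d'adapter aux z�ro-cycles la strat�gie de Harari (\cite[Th�or�me 4.2.1]{Harari}, \cite[Th�or�me 2]{Harari3}) en combinant la proposition cruciale \ref{lem-complicated} avec le lemme formel \ref{lemme formel}, les lemmes de d�placement \ref{deplacement}--\ref{deplacement2}, le th�or�me d'irr�ductibilit� de Hilbert effectif pour les z�ro-cycles \ref{simplified}, et le contr�le de la sp�cialisation des groupes de Brauer donn� par la proposition \ref{Br-specialisation}. Il suffit d'�tablir (ii), qui contient (i) (en prenant $S=\emptyset$) et entra�ne (iii) par la proposition \ref{H-CH0}.

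Soient $\{z_v\}_{v\in\Omega_k}$ une famille orthogonale � $Br(X),$ $S\subset\Omega_k$ un ensemble fini et $m>0$ un entier. Je fixerais un $k$-point $\infty\in\mathbb{P}^1(k)$ de fibre $X_\infty$ lisse et g�om�triquement int�gre et un ouvert non vide $U\subset X$ rencontrant $X_\infty,$ puis je noterais $\textsf{Hil}'$ l'intersection de $\textsf{Hil}$ avec le sous-ensemble hilbertien g�n�ralis� fourni par la proposition \ref{Br-specialisation}. Apr�s r�duction via les lemmes \ref{deplacement}--\ref{deplacement2} et ajout d'un z�ro-cycle effectif global support� sur $X_\infty,$ je me ram�nerais au cas o� chaque $z_v$ est effectif s�parable de degr� $d+1$ support� dans $U_v,$ avec $\pi_*(z_v)$ s�parable dans $\mathbb{A}^1=\mathbb{P}^1\setminus\{\infty\}$ pour $v\in S,$ pour un entier $d$ assez grand � fixer.

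Je prendrais alors $\Lambda\subset Br(U)$ un ensemble fini de repr�sentants du groupe fini $Br(X_\eta)/Br(k(\mathbb{P}^1))$ et j'appliquerais la proposition \ref{lem-complicated} � $U,$ $\Lambda,$ $\infty$ et $d+1$ pour obtenir les ensembles $S_0,$ $\Sigma$ et $E\subset Br(k(\mathbb{P}^1))\cap Br(U_0).$ Le lemme formel \ref{lemme formel} appliqu� � $\Lambda\cup E$ fournirait une extension $\{z'_v\}_{v\in S''}$ de $\{z_v\}_{v\in S}$ orthogonale � $\Lambda\cup E,$ avec $S''\supset S\cup S_0.$ J'invoquerais de nouveau la proposition \ref{lem-complicated} avec $S'=S''$ pour obtenir des places suppl�mentaires $v_1,\ldots,v_l,v_\infty$ hors de $S''$ et des z�ro-cycles effectifs $\theta_i$ de degr� $d+1$ sur $\mathbb{P}^1_{v_i}.$ Le lemme \ref{simplified} produirait un point ferm� $\theta\in\textsf{Hil}'\subset\mathbb{A}^1$ de degr� $d+1,$ entier en dehors de $S''\cup\{v_1,\ldots,v_l,v_\infty\}\cup\Sigma,$ suffisamment proche de $\pi_*(z'_v)$ pour tout $v\in S''$ et de chaque $\theta_i.$ La proposition \ref{lem-complicated} livrerait alors une famille $\{M_w\}_{w\in\Omega_{k(\theta)}}$ de $k(\theta)_w$-points lisses de $X_\theta\cap U_0$ orthogonale � $\Lambda\cup E,$ donc orthogonale � $Br(X_\theta)$ via l'isomorphisme de sp�cialisation garanti par le choix de $\textsf{Hil}'.$

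L'hypoth�se appliqu�e � la fibre $X_\theta$ (pour $\theta\in\textsf{Hil}'\subset\textsf{Hil}$) fournirait un z�ro-cycle global de degr� $1$ sur $X_\theta/k(\theta)$ suffisamment proche des $M_w$ aux places au-dessus de $S;$ pouss� par l'immersion $X_\theta\hookrightarrow X,$ il devient un z�ro-cycle de degr� $d+1$ sur $X,$ et on retrancherait le shift initial pour obtenir le z�ro-cycle global de degr� $1$ cherch�, approchant $z_v$ dans $CH_0(X_v)/m$ pour $v\in S.$ La principale difficult� sera de g�rer uniform�ment les deux variantes de l'hypoth�se: dans le cas des points rationnels, il faut renforcer la construction de $\theta$ pour assurer l'existence de vrais $k(\theta)_w$-points rationnels (et non seulement de z�ro-cycles locaux) sur $X_\theta$ aux places voulues, puis de v�rifier soigneusement que les d�placements rationnels successifs (en particulier l'ajout et le retrait du shift effectif global) pr�servent la proximit� locale souhait�e dans $CH_0(X_v)/m$ aux places de $S.$
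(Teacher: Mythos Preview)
Your overall architecture is right and matches the paper: combine proposition~\ref{lem-complicated}, the formal lemma~\ref{lemme formel}, the Hilbert--type lemma~\ref{simplified}, and the specialization isomorphism of proposition~\ref{Br-specialisation}. However, the order of operations you propose does not work, for two related reasons.

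First, the auxiliary set $E$ produced by proposition~\ref{lem-complicated} lies in $Br(U_0)$ with $U_0=U\setminus X_\infty$; its elements cannot be evaluated at points of $X_\infty$. A shift supported on $X_\infty$ therefore prevents you from applying the formal lemma to $\Lambda\cup E$ (which requires the $z_v$ to be supported in $U_0$), and is incompatible with your own claim that $\pi_*(z_v)$ is supported in $\mathbb{A}^1$. Second, and more seriously, you effectivise \emph{before} the formal lemma; but the formal lemma then adjoins places $v\in S''\setminus S$ where the new $z'_v$ need not be effective and $\pi_*(z'_v)$ need not be separable of the prescribed degree. Lemma~\ref{simplified} then cannot be applied, and there is no way to produce the $k(\theta)_w$-points $M_w$ for $w\in S''\otimes_k k(\theta)$ required as input to proposition~\ref{lem-complicated}. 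The paper repairs both points at once: it applies the formal lemma to the original degree-$1$ cycles, chooses a closed point $N_0\in U_0$ and an integer $a$ annihilating $\Lambda\cup E$ (and divisible by $m$ for the approximation statement), and only \emph{afterwards} renders all $z'_v$ ($v\in S_2$) effective of a common degree $d\equiv 1\pmod{an_0}$ by adding $an_0 z_v^-$ and then multiples of $aN_0$; since $S_1,E,\Sigma$ in proposition~\ref{lem-complicated} do not depend on $d$, one can fix $d$ at this late stage and still obtain the places $v_i$ and cycles $\theta_i$. The congruence $d\equiv1\pmod{an_0}$ is what makes the final subtraction $z=z'-\frac{d-1}{n_0}N_0$ a well-defined degree-$1$ cycle whose class modulo $m$ is unaffected.

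Finally, your closing worry about having to ``renforcer la construction de $\theta$'' in the rational-points case is unfounded: proposition~\ref{lem-complicated} already outputs genuine $k(\theta)_w$-points $M_w$ (not local zero-cycles), so once $\{M_w\}_w\perp Br(X_\theta)$ both variants of the fibre hypothesis apply without further work.
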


\begin{proof}

Soit $\Lambda=\{A_1,\ldots,A_r\}\subset Br(k(X))$ un ensemble fini qui engendre le groupe
$Br(X_\eta)/Br(K).$
D'après la proposition \ref{Br-specialisation}, il existe un sous-ensemble hilbertien généralisé
$\textsf{Hil}'\subset \textsf{Hil}$ tel que
pour tout point fermé $\theta\in \textsf{Hil}',$ la flèche de spécialisation
$$sp_\theta:\frac{Br(X_\eta)}{Br(K)}\To \frac{Br(X_\theta)}{Br(k(\theta))}$$
soit un isomorphisme.
Les éléments $A_\theta=sp_\theta(A)(A\in\Lambda)$ engendrent le groupe
$Br(X_\theta)/Br(k(\theta)).$

Soit $U$ un ouvert de $X$ tel que $\Lambda\subset Br(U)$ et tel que les fibres $X_\theta$ qui rencontrent
$U$ soient lisses et géométriquement intègres.
On pose $Z=X\setminus U=Z_1\cup Z_2$ avec $Z_2=\bigsqcup_{i=1}^lX_{m_i}$ comme dans la proposition
\ref{lem-complicated}.
On fixe un $k$-point $\infty$ de $\mathbb{P}^1$ différent de chaque $m_i(i\in\{1,\ldots,l\}),$
tel que la fibre $X_\infty$ soit lisse et géométriquement intègre, on note $U_0=U\setminus X_\infty$ et $V_0=\pi(U_0).$
On pose $D=\mathbb{P}^1\setminus V_0.$
En restreignant $\textsf{Hil}'$ si nécessaire, on peut supposer que $D\cap \textsf{Hil}'=\emptyset.$

Soit $S$ un ensemble fini de places de $k.$
La proposition \ref{lem-complicated}  donne un ensemble fini $S_1$ de places de $k$ contenant $S,$
en ensemble infini $\Sigma\subset\Omega_k,$
et un ensemble fini
$E\subset Br(K)\cap Br(U_0)$ possédant la propriété décrite dans \ref{lem-complicated}.
On choisit $N_0$ un point fermé de $U_0$ et $a$ un entier positif
tel que les éléments de $\Lambda\cup E$ soient tués par $a.$ On note $n_0$ le degré du point $N_0.$

On part d'une famille de zéro-cycles $z_v\in Z_0(X_v)$ de degré $1$ telle que
$$\sum_{v\in\Omega}inv_v( \langle b,z_v \rangle _v)=0\mbox{ pour tout }b\in Br(X).$$
On peut supposer que les $z_v$ sont à support dans $U_0$ par le lemme de déplacement \ref{deplacement}.
Le lemme formel (Lemme \ref{lemme formel}) dit qu'il existe un ensemble fini $S_2$ de places de $k$ contenant $S_1$
et $z'_v\in Z_0({U_0}_v)(v\in S_2)$ de degré $1$ avec $z'_v=z_v(v\in S_1)$
tels que
$$\sum_{v\in S_2}inv_v( \langle A,z'_v \rangle _v)=0\mbox{ pour tout }A\in \Lambda\cup E.$$

Pour $v\in S_2,$ on écrit $z'_v=z^+_v-z^-_v$ où $z^+_v$ et $z^-_v$ sont des zéro-cycles effectifs à supports disjoints dans $U_0$.
On pose $z^1_v=z'_v+an_0z^-_v=z^+_v+(an_0-1)z^-_v,$ alors $deg(z^1_v)\equiv 1(mod\mbox{ }an_0),$
$ \langle A,z'_v \rangle _v= \langle A,z^1_v \rangle _v$ pour tout $A\in\Lambda\cup E.$
On a aussi que $ \langle A,aN_0 \rangle _v=0$ car les $A\in\Lambda\cup E$ sont tués par $a.$
On ajoute un multiple positif convenable de $aN_0$ à chaque $z^1_v(v\in S_2)$ et trouve un zéro-cycle
$z^2_v$ de même degré $d\equiv 1(mod\mbox{ }an_0)$ pour $v\in S_2.$  On a
$ \langle A,z^1_v \rangle _v= \langle A,z^2_v \rangle _v$ pour $A\in \Lambda\cup E.$
Donc $$\sum_{v\in S_2}inv_v( \langle A,z^2_v \rangle _v)=0\mbox{ pour tout }A\in \Lambda\cup E.$$

D'après le lemme de déplacement \ref{deplacement2}, il existe, pour $v\in S_2,$ zéro-cycle effectif $z^3_v$ de degré $d$
à support dans $U_0$ tel que $\pi_*(z^3_v)$ soit séparable et tel que $z^3_v$ soit suffisamment proche de $z^2_v.$
D'où $$\sum_{v\in S_2}inv_v( \langle A,z^3_v \rangle _v)=0\mbox{ pour tout }A\in \Lambda\cup E.$$

Avec l'entier $d,$ la proposition \ref{lem-complicated}  donne
les places $v_1,\ldots,v_l,v_\infty\in\Omega\setminus S_2$ avec $v_\infty\in\Sigma$
et le zéro-cycle $\theta_i\in Z_0(\mathbb{P}^1_{v_i})(i\in\{1,\ldots,l\}\cup\{\infty\})$
effectif satisfaisant la propriété décrite dans \ref{lem-complicated}.

On prend $v_0\in\Sigma\setminus(S_2\cup\{v_1,\ldots,v_l\}\cup\{v_\infty\})$ une place non-archimédienne.
D'après le lemme \ref{simplified} appliqué à $S_2\cup\{v_1,\ldots,v_l\}\cup\{v_\infty\}$ et $v_0,$
on trouve $\theta\in\textsf{Hil}'\subset V_0$ un point fermé de degré $d,$
tel que $\theta$ soit suffisamment proche de $\pi_*(z^3_v)(v\in S_2)$ et suffisamment proche
de $\theta_i(i\in\{1,\ldots,l\}\cup\{\infty\}),$ de plus $\theta$ est entier en dehors de
$S_2\cup\{v_0,v_1,\ldots,v_l,v_\infty\}.$ En particulier, le point $\theta\in V_0$ est un
$S_2\cup\{v_1,\ldots,v_l,v_\infty\}\cup\Sigma$-entier.

Comme $\theta\times_{\mathbb{P}^1}{\mathbb{P}_v^1}=\bigsqcup_{w\mid v,w\in\Omega_{k(\theta)}}Spec(k(\theta)_w)$
pour $v\in\Omega,$ l'image de $\theta$ dans
$Z_0({\mathbb{P}^1_v})$ s'écrit comme $\theta_v=\sum_{w\mid v,w\in\Omega_{k(\theta)}}P_w$ où $P_w=Spec(k(\theta)_w)$ est un
point fermé de ${\mathbb{P}^1_v}$ de corps résiduel $k(\theta)_w.$
Pour $v\in S_2,$ $\theta_v$ est suffisamment proche de $\pi_*(z^3_v),$ où le zéro-cycle effectif séparable
$\pi_*(z^3_v)$ est de la forme $\sum_{w\mid v,w\in\Omega_{k(\theta)}}Q_w$ avec
les $Q_w$ deux à deux distincts. Alors $k(\theta)_w=k_v(P_w)=k_v(Q_w),$
$P_w$ est suffisamment proche de $Q_w\in \mathbb{P}^1_v(k(\theta)_w).$ D'où on sait que
$z^3_v=\sum_{w\mid v,w\in\Omega_{k(\theta)}}M^0_w$ avec $k_v(M^0_w)=k(\theta)_w$ et $M^0_w\in X_v(k(\theta)_w)$
se trouve dans la fibre au-dessus du point fermé $Q_w.$
Le théorème des fonctions implicites implique qu'il existe un $k(\theta)_w$-point lisse $M_w$ de $X_\theta\cap U_0$
suffisamment proche de $M^0_w$ pour toute $w\in S_2\otimes_kk(\theta).$

On a donc, pour $A\in \Lambda\cup E,$


\begin{eqnarray*}
   \sum_{w\in S_2\otimes_kk(\theta)}inv_w(A(M_w))&=&\sum_{w\in S_2\otimes_kk(\theta)}inv_w(A(M^0_w)) \\
  =\sum_{v\in S_2}\sum_{w\mid v}inv_w(A(M^0_w))\mbox{ }\mbox{ }&=&\sum_{v\in S_2}\sum_{w\mid v}inv_v(cores_{k(\theta)_w/k_v}(A(M^0_w)))\\
  =\sum_{v\in S_2}\sum_{w\mid v}inv_v( \langle A,M^0_w \rangle _v)&=&\sum_{v\in S_2}inv_v( \langle A,z^3_v \rangle _v)\mbox{ }=\mbox{ }0\\
\end{eqnarray*}

La proposition \ref{lem-complicated}  donne des $k(\theta)_w$-points lisses $M_w$ de $X_\theta\cap U_0$
pour $w\in \Omega_{k(\theta)}\setminus S_2\otimes_kk(\theta)$ tels que
$$\sum_{w\in\Omega_{k(\theta)}}inv_w(A(M_w))=0\mbox{ pour tout }A\in\Lambda\cup E.$$

Les points rationnels $M_w\in(X_\theta\cap U_0)(k(\theta)_w)$ définissent une famille de zéro-cycles de degré $1$
de $X_\theta$ vue comme une $k(\theta)$-variété. On a
$$\sum_{w\in\Omega_{k(\theta)}}inv_w( \langle A_\theta,M_w \rangle _w)=\sum_{w\in\Omega_{k(\theta)}}inv_w(A(M_w))=0\mbox{ pour tout }A\in\Lambda\cup E.$$

Comme $\theta\in \textsf{Hil}',$ les $A_\theta$ engendrent le
groupe $Br(X_\theta)/Br(k(\theta)),$
on trouve $\{M_w\}_{w\in\Omega_{k(\theta)}}\bot Br(X_\theta).$
Comme $\theta\in \textsf{Hil},$ la fibre $X_\theta$ satisfait l'hypothèse (i) (resp. (ii),(iii)),
il existe alors un zéro-cycle global $z'\in Z_0(X_\theta)$ de degré $1$ (sur $k(\theta)$),
donc $z'\in Z_0(X)$ est de degré $d\equiv 1(mod\mbox{ }an_0),$
alors $z=z'-\frac{d-1}{n_0}N_0$ est un zéro-cycle global de degré $1$ sur $X.$
Pour montrer l'approximation faible/forte pour les zéro-cycles, on fixe $m$ un entier positif au tout début.
De plus, on peut choisir $a$ un multiple de $m.$
On prend $S$ le sous-ensemble fini considéré pour l'approximation (resp. un sous-ensemble fini qui
contient toutes les places telles que $CH_0(X_v)\To CH_0(\mathbb{P}^1_v)$ ne soit pas injective).
Ensuite on fait fonctionner l'argument ci-dessus,
à l'aide du lemme 1.8 de Wittenberg \cite{Wittenberg}, on obtient un zéro-cycle global
$z=z_m$ ayant la même image que $z_v$ dans $CH_0(X_v)/m$ pour $v\in S.$
\end{proof}


\subsection{Cas où la fibre générique admet un zéro-cycle de degré $1$}\label{subsection1}

Pour une fibration $X\to\P^1,$ si l'on suppose que la fibre générique $X_\eta/k(\mathbb{P}^1)$ admet
un zéro-cycle de degré $1,$
le principe de Hasse pour les zéro-cycles de degré $1$ vaut pour $X.$
On s'intéresse seulement à l'approximation faible.

\begin{thm}\label{thm15}
Soient $\pi:X\To\mathbb{P}^1$ une fibration et
$\textsf{Hil}$ un sous-ensemble hilbertien généralisé de $\P^1.$
On suppose que

- l'indice de la fibre générique $ind(X_\eta/K)=1,$ où $K=k(\P^1),$

- le groupe $Br(X_{\bar{\eta}})$ est fini, le groupe $PicX_{\bar{\eta}}$ est sans torsion,

- pour tout point fermé $\theta\in \textsf{Hil},$
l'obstruction de Brauer-Manin est la seule à l'approximation faible
pour les zéro-cycles de degré $1$ sur la fibre $X_\theta.$

Alors l'obstruction de Brauer-Manin est la seule à l'approximation faible
pour les zéro-cycles de degré $1$ sur $X.$

Si de plus on suppose que
\begin{itemize}
\item[(H CH0)]
l'application $CH_0(X_v)\To CH_0(\mathbb{P}^1_v)$ est un isomorphisme pour presque toute place $v\in\Omega_k,$
par exemple la fibre générique $X_\eta$ est une $k(\mathbb{P}^1)$-variété  rationnellement connexe,
\end{itemize}
alors l'obstruction de Brauer-Manin est la seule à l'approximation forte
pour les zéro-cycles de degré $1$ sur $X.$
\end{thm}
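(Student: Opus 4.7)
The plan is to follow the scheme of the proof of Theorem \ref{thm10}, replacing its central combinatorial tool (Proposition \ref{lem-complicated}, which handles special fibres via a \v{C}ebotarev argument on multiplicity-one geometrically integral components) by a direct appeal to the hypothesis $ind(X_\eta/K)=1$. By spreading, this hypothesis produces a nonempty open $V\subset\mathbb{P}^1$ over which $\pi$ has smooth geometrically integral fibres, each admitting a zero-cycle of degree $1$ over its residue field; for a suitable integral model, even the $w$-adic fibres over places of good reduction will carry local zero-cycles of degree $1$ via Hensel's lemma applied to the spread-out relative zero-cycle.

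First, apply Proposition \ref{Br-specialisation} to refine $\textsf{Hil}$ to a generalized Hilbert subset $\textsf{Hil}'$ over which $sp_\theta:Br(X_\eta)/Br(K)\to Br(X_\theta)/Br(k(\theta))$ is an isomorphism; fix a finite set $\Lambda=\{A_1,\ldots,A_r\}\subset Br(k(X))$ lifting generators of this group, a Zariski open $U\subset X$ with $\Lambda\subset Br(U)$ and $U\subset\pi^{-1}(V)$, a $k$-point $\infty\in\mathbb{P}^1\setminus V$, and set $U_0=U\setminus\pi^{-1}(\infty)$. Start from a family $\{z_v\}$ of local zero-cycles of degree $1$ on $X$ orthogonal to $Br(X)$, together with the weak approximation data $(m,S)$ to be realized. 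Apply Lemma \ref{deplacement} to bring supports of $z_v$ into $U_0$, then the lemme formel (Lemma \ref{lemme formel}) with $\Lambda$ to enlarge $S$ to a finite set $S_2$ producing
$$\sum_{v\in S_2}inv_v(\langle A,z_v\rangle_v)=0,\quad A\in\Lambda;$$
next invoke the degree-adjustment argument from the proof of Theorem \ref{thm10} (auxiliary closed point $N_0\in U_0$ of degree $n_0$, integer $a$ annihilating $\Lambda$ with $m\mid a$) to normalize all $z_v$ to a common degree $d\equiv 1\pmod{an_0}$, and apply Lemma \ref{deplacement2} to make each $\pi_*(z_v)$ separable with support in $V$.

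Then invoke Lemma \ref{simplified} to obtain a closed point $\theta\in\textsf{Hil}'\cap V$ of degree $d$ approximating $\pi_*(z_v)$ at each $v\in S_2$ and integral outside $S_2\cup\{v_0\}$ for one auxiliary non-archimedean place $v_0$. The implicit function theorem lifts the approximation to $k(\theta)_w$-smooth points $M_w\in X_\theta\cap U_0$ for $w\in S_2\otimes_kk(\theta)$. For the remaining $w$, integrality of $\theta$ and good reduction provide, via Hensel's lemma applied to the spread-out relative zero-cycle of degree $1$ on $X\to\mathbb{P}^1$, local zero-cycles $M_w$ of degree $1$ on $X_\theta$ whose supports reduce into the integral model of $U_0$; since $\Lambda$ extends to Brauer classes on that model, each invariant $inv_w(A_\theta(M_w))$ vanishes automatically there. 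Combined with the $S_2$-side identity and the compatibility between specialization of $\Lambda$ and evaluation, the family $\{M_w\}_{w\in\Omega_{k(\theta)}}$ is orthogonal to $\Lambda$, hence to $Br(X_\theta)$ via the isomorphism $sp_\theta$.

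The weak approximation hypothesis on $X_\theta\in\textsf{Hil}$ then yields a global zero-cycle $z'$ of degree $1$ on $X_\theta$ close to $\{M_w\}$ in $CH_0/m$ at places above $S$; regarded as a degree-$d$ zero-cycle on $X$ and corrected by $-\frac{d-1}{n_0}N_0$, this produces the desired degree-$1$ global zero-cycle approximating $z_v$ in $CH_0(X_v)/m$ for $v\in S$. Strong approximation under (H CH0) follows by the standard reduction via Proposition \ref{H-CH0}. The main obstacle will be the orthogonality verification at places outside $S_2$: whereas Theorem \ref{thm10} needed the intricate \v{C}ebotarev construction of Proposition \ref{lem-complicated} to handle non-integral or non-generically-reduced fibres, here the index-$1$ assumption collapses the issue to the unramifiedness of $\Lambda$ on an integral model of $U_0$ together with the automatic existence of local zero-cycles of degree $1$ coming from the spread-out relative zero-cycle.
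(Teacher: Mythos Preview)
Your overall scheme is the right one and coincides with the paper's, but there is a genuine gap in the orthogonality verification at places outside $S_2$. Lemma~\ref{simplified} only produces a closed point $\theta$ that is integral \emph{outside} $S_2\cup\{v_0\}$; at the auxiliary place $v_0$ you have no control, and $\theta$ may well reduce to $\infty$ (or anywhere) there. Your good-reduction argument --- that the support of the specialised relative zero-cycle lands in the integral model $\mathcal{U}_0$ where $\Lambda$ is unramified --- therefore breaks down for $w\mid v_0$, and you cannot conclude $inv_w(A(M_w))=0$ at those places. There is no way to absorb $v_0$ into $S_2$ either, since Lemma~\ref{simplified} requires $v_0\notin S_2$ and the formal lemma must be applied before $\theta$ is chosen.

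The paper sidesteps this entirely with a normalisation trick that you are missing. Having written the generic degree-$1$ zero-cycle as $z_0=\sum_j n_j R_j(\eta)$, one sets $A'_i=\sum_j n_j\,\mathrm{cores}_{K_j/K}\bigl(A_i(R_j(\eta))\bigr)\in Br(K)$ and replaces each $A_i$ by $A_i-A'_i$. Since $A'_i\in Br(K)$, this does not disturb the fact that $\{A_i\}$ generates $Br(X_\eta)/Br(K)$, nor the hypothesis of the formal lemma; but now $\langle A_i,z_0\rangle_K=0$ identically. Hence for \emph{every} $w$ over a place $v\notin S'$, taking $M_w=\sum_j n_j R_j(P_w)$ (no Hensel is needed --- this is direct specialisation of $Z_j\to U$ at $\theta$, then localisation) one gets $\langle A_i,M_w\rangle_w=0$ simply because it is the specialisation of $0\in Br(K)$. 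This holds at $v_0$ as well, with no integrality required. Your good-reduction argument, by contrast, tries to prove each $inv_w=0$ via an integral-model statement that is intrinsically unavailable at $v_0$; the normalisation replaces this with a purely generic vanishing that specialises everywhere.
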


\begin{proof}
On suit la stratégie de la preuve du théorème 4.3.1 de Harari \cite{Harari}.

D'après la proposition \ref{Br-specialisation}, il existe un sous-ensemble hilbertien généralisé
$\textsf{Hil}_0$ tel que pour tout $\theta\in\textsf{Hil}_0$ l'application de spécialisation soit un isomorphisme
de groupes finis. On peut supposer que les fibres $X_\theta(\theta\in\textsf{Hil}_0)$ sont lisses géométriquement intègres.
Soit $\Lambda=\{A_1,\ldots,A_r\}\subset Br(k(X))$ un ensemble de représentants du groupe fini $Br(X_\eta)/Br(k(\mathbb{P}^1)).$
Les éléments $A_\theta=sp_\theta(A)\in Br(k(\theta)(X_\theta))(A\in\Lambda)$ engendrent
$Br(X_\theta)/Br(k(\theta))$ pour tout $\theta\in \textsf{Hil}_0.$

Par l'hypothèse, il existe un zéro-cycle $z_0$ de degré $1$ sur $X_\eta,$ on écrit
$z_0=\sum_jn_jR_j(\eta)$ avec $K_j=K(R_j(\eta))$ le corps résiduel du point $R_j(\eta).$
On pose $d_j=[K(R_j(\eta)):K].$ Il existe alors un ouvert non vide $U$ de $\mathbb{P}^1,$
un morphisme étale fini $Z_j\To U$ de degré $d_j,$ où $Z_j$ est une sous-variété fermé intègre de
$X\times_{\mathbb{P}^1}U$ de corps des fonctions $K_j$ pour chaque $j.$

Pour chaque $i,$ on pose $A'_i=\sum_jn_jcores_{K_j/K}(A_i(R_j(\eta)))\in Br(K).$ En
remplaçant $A_i$ par $A_i-A'_i,$ on peut supposer que $ \langle A_i,\sum_jn_jR_j(\eta) \rangle _K=0.$
Soit $X_0$ un ouvert non vide de $X$ tel que $\Lambda\subset Br(X_0).$
En restreignant $U,$ on peut supposer que $\pi_{|X_0}:X_0\To U$ est surjectif.
On fixe $N_0$ un point fermé de $X_0$ de degré $n_0.$
On choisit  $a$ un entier positif
tel que les éléments de $\Lambda$ soient tués par $a$ et tel que $a$ est un multiple de $m$ (si l'on est en train de considérer
l'approximation concernant l'image dans $CH_0(-)/m$).

On part d'une famille de zéro-cycles de degré $1,$ $\{z_v\}\bot Br(X),$
on va l'approximer aux places dans un sous-ensemble fini $S\subset\Omega_k.$
Pour l'approximation forte,
on doit prendre $S$ un sous-ensemble fini qui
contient toutes les places telles que $CH_0(X_v)\To CH_0(\mathbb{P}^1_v)$ ne soit pas injective.
On peut supposer que
les supports des $z_v$ sont dans $X_0$ par le lemme de déplacement \ref{deplacement}.

Le lemme formel (Lemme \ref{lemme formel}) dit qu'il existe un ensemble fini $S'$ de places de $k$ contenant $S$
et il existe $z'_v\in Z_0(X_{0v})(v\in S')$ de degré $1$ avec $z'_v=z_v(\forall v\in S)$
tels que
$$\sum_{v\in S'}inv_v( \langle A_i,z'_v \rangle _v)=0\mbox{ pour tout }A_i\in \Lambda.$$

Pour $v\in S',$ on écrit $z'_v=z^+_v-z^-_v$ où $z^+_v$ et $z^-_v$
sont des zéro-cycles effectifs à supports distincts dans $X_0$.
On pose $z^1_v=z'_v+an_0z^-_v=z^+_v+(an_0-1)z^-_v,$ alors $deg(z^1_v)\equiv 1(mod\mbox{ }an_0),$
$ \langle A_i,z'_v \rangle _v= \langle A_i,z^1_v \rangle _v$ pour tout $A_i\in\Lambda.$
On a aussi que $ \langle A_i,aN_0 \rangle _v=0$ car les $A_i\in\Lambda$ sont tués par $a.$
On ajoute un multiple positif convenable de $aN_0$ à chaque $z^1_v(v\in S')$ et trouve
$z^2_v$ de même degré $\delta\equiv 1(mod\mbox{ }an_0)$ pour $v\in S'.$  On a
$ \langle A_i,z^1_v \rangle _v= \langle A_i,z^2_v \rangle _v$ pour $A_i\in \Lambda.$
Donc $$\sum_{v\in S'}inv_v( \langle A_i,z^2_v \rangle _v)=0\mbox{ pour tout }A_i\in \Lambda.$$

D'après le lemme de déplacement \ref{deplacement2}, il existe, pour $v\in S',$ un zéro-cycle effectif $z^3_v$ de degré $\delta$
à support dans $X_0$ tel que $\pi_*(z^3_v)$ soit séparable et tel que $z^3_v$ soit suffisamment proche de $z^2_v.$
D'où $$\sum_{v\in S'}inv_v( \langle A_i,z^3_v \rangle _v)=0\mbox{ pour tout }A_i\in \Lambda.$$

Par le lemme \ref{simplified}, il existe un point fermé $\theta\in \textsf{Hil}',$  où $\textsf{Hil}'$
est un sous-ensemble hilbertien généralisé contenu dans $\textsf{Hil}\cap\textsf{Hil}_0\cap U,$ tel que
$\theta$ soit suffisamment proche
de $\pi_*(z^3_v)$ pour toute $v\in S'.$

Comme $\theta\times_{\mathbb{P}^1}{\mathbb{P}_v^1}=\bigsqcup_{w\mid v,w\in\Omega_{k(\theta)}}Spec(k(\theta)_w)$
pour $v\in\Omega,$ l'image de $\theta$ dans
$Z_0({\mathbb{P}^1_v})$ s'écrit comme $\theta_v=\sum_{w\mid v,w\in\Omega_{k(\theta)}}P_w$ où $P_w=Spec(k(\theta)_w)$ est un
point fermé de ${\mathbb{P}^1_v}$ de corps résiduel $k(\theta)_w.$
Pour $v\in S',$ $\theta_v$ est suffisamment proche de $\pi_*(z^3_v),$ où
le zéro-cycle effectif séparable $\pi_*(z^3_v)$ est de la forme $\sum_{w\mid v,w\in\Omega_{k(\theta)}}Q_w$ avec
les $Q_w$ deux à deux distincts. Alors $k(\theta)_w=k_v(P_w)=k_v(Q_w),$
$P_w$ est suffisamment proche de $Q_w\in \mathbb{P}^1_v(k(\theta)_w).$ D'où on sait que
$z^3_v=\sum_{w\mid v,w\in\Omega_{k(\theta)}}M^0_w$ avec $k_v(M^0_w)=k(\theta)_w$ et $M^0_w\in X_v(k(\theta)_w)$ se trouve
dans la fibre au-dessus
du point fermé $Q_w.$
Le théorème des fonctions implicites implique qu'il existe un $k(\theta)_w$-point lisse $M_w$ de $X_\theta\cap X_0$ suffisamment
proche de $M^0_w$ pour toute $w\in S'\otimes_kk(\theta).$

On a donc, pour $A_i\in \Lambda,$


\begin{eqnarray*}
&&\sum_{w\in S'\otimes_kk(\theta)}inv_w( \langle A_i,M_w \rangle _w)\mbox{ }\mbox{ }\mbox{ }\mbox{ }\mbox{ }\mbox{ }\mbox{ }\mbox{ }\mbox{ }\mbox{ }\mbox{ }\mbox{ }\mbox{ }=\sum_{w\in S'\otimes_kk(\theta)}inv_w(A_i(M_w)) \\
&=&\sum_{w\in S'\otimes_kk(\theta)}inv_w(A_i(M^0_w))\mbox{ }\mbox{ }\mbox{ }\mbox{ }\mbox{ }\mbox{ }\mbox{ }\mbox{ }\mbox{ }\mbox{ }\mbox{ }\mbox{ }\mbox{ }\mbox{ }\mbox{ }\mbox{ }=\sum_{v\in S'}\sum_{w\mid v}inv_w(A_i(M^0_w))\\
&=&\sum_{v\in S'}\sum_{w\mid v}inv_v(cores_{k(\theta)_w/k_v}(A_i(M^0_w)))\mbox{ }=\sum_{v\in S'}\sum_{w\mid v}inv_v( \langle A_i,M^0_w \rangle _v)\\
&=&\sum_{v\in S'}inv_v( \langle A_i,z^3_v \rangle _v)\mbox{ }\mbox{ }\mbox{ }\mbox{ }\mbox{ }\mbox{ }\mbox{ }\mbox{ }\mbox{ }\mbox{ }\mbox{ }\mbox{ }\mbox{ }\mbox{ }\mbox{ }\mbox{ }\mbox{ }\mbox{ }\mbox{ }\mbox{ }\mbox{ }\mbox{ }\mbox{ }\mbox{ }=\mbox{ }0\\
\end{eqnarray*}

- Pour $v\in S',$ on pose  $z_v^4=\sum_{w\in \Omega_{k(\theta)},w|v}M_w,$ c'est un zéro-cycle
effectif de degré $1$ sur $X_\theta\times_kk_v.$
Vu comme un zéro-cycle (de degré $\delta$) de $X_v$ il est suffisamment proche de $z_v^3.$

- Pour $v\notin S'$ on pose
$z_v^4=\sum_jn_jR_j(\theta_v):=\sum_jn_jR_j(\sum_{w\in\Omega_{k(\theta)},w|v}P_w)=\sum_{w\in\Omega_{k(\theta)},w|v}\sum_jn_jR_j(P_w),$
où la spécialisation $R_j(P_w)=Z_j\times_UP_w$ est un zéro-cycle effectif de degré $d_j$ dans la fibre
${X_v}_{P_w}.$ Alors $M_w=\sum_jn_jR_j(P_w)$ est  un zéro-cycle effectif de degré $1$ dans la fibre
${X_v}_{P_w}.$
Comme $ \langle A_i,\sum_jn_jR_j(\eta) \rangle _{K}=0,$ on a $ \langle A_i,M_w \rangle _w=0,$ donc
$$\sum_{w\in (\Omega\setminus S')\otimes_kk(\theta)}inv_w( \langle A_i,M_w \rangle _w)=0.$$ On arrive à
$$\sum_{w\in \Omega_{k(\theta)}}inv_w( \langle A_i,M_w \rangle _w)=0,\mbox{ pour tout }A_i\in\Lambda.$$

Pour chaque $w\in\Omega_{k(\theta)},$ $M_w$ est un zéro-cycle effectif local de degré $1$ sur la $k(\theta)$-variété
$X_\theta,$ donc

$$\sum_{w\in \Omega_{k(\theta)}}inv_w( \langle A_{i\theta},M_w \rangle _w)=0,\mbox{ pour tout }A_i\in\Lambda.$$

Comme $\theta\in\textsf{Hil}_0,$ les $A_{i\theta}$ engendrent le
groupe $Br(X_\theta)/Br(k(\theta)),$
on a donc $\{M_w\}_{w\in\Omega_{k(\theta)}}\bot Br(X_\theta).$
Comme $\theta\in \textsf{Hil},$
l'obstruction de Brauer-Manin est la seule à l'approximation faible
pour les zéro-cycles de degré $1$ sur la fibre $X_\theta,$
il existe alors un zéro-cycle global $z_m'\in Z_0(X_\theta)$ de degré $1$ (sur $k(\theta)$),
tel que pour toute $w|v\in S',$ $z_m'$ et $z^4_v$ ont la même image dans
$CH_0(X_{vP_w})/m.$
Donc $z_m'\in Z_0(X)$ est de degré $\delta\equiv 1(mod\mbox{ }an_0),$
alors $z_m=z_m'-\frac{\delta-1}{n_0}N_0$ est un zéro-cycle global de degré $1.$
En notant que $\frac{\delta-1}{n_0}$ est un multiple
de $a$ (\textit{a fortiori} de $m$), à l'aide du lemme 1.8 de Wittenberg \cite{Wittenberg}, on
vérifie que les zéro-cycles $z=z_m$ et $z_v$ ont la même image
dans $CH_0(X_v)/m$ pour toute $v\in S.$
\end{proof}


\subsection{Quelques remarques sur cette section}

\begin{rem}\label{rem-B}
Comme indiqué par Harari dans \cite{Harari3}, dans cette section, on peut considérer l'obstruction
de Brauer-Manin associée à un certain sous-groupe, ce sera plus flexible.
Soit $B$ un sous-groupe de $Br(X_\eta)$ contenant l'image de $Br(k(\P^1))$ tel que
$B/Br(k(\P^1))$ soit fini. Pour presque tout point fermé $\theta$ de $\P^1,$ la flèche de spécialisation
$sp_\theta^B:B/Br(k(\P^1))\to Br(X_\theta)/Br(k(\theta))$ est bien définie, on pose $B_\theta$ l'image de $B.$
Si sur la fibre $X_\theta(\theta\in \textsf{Hil})$
l'obstruction de Brauer-Manin associée au sous-groupe $B_\theta$ est la seule, on peut
conclure sans difficulté que l'obstruction associée au sous-groupe $B\cap Br(X)$ est la seule sur $X.$
En particulier, si le groupe $Br(X_{\bar{\eta}})$ est fini et le groupe $PicX_{\bar{\eta}}$ est sans torsion,
le groupe $Br(X_\eta)/Br(k(\P^1))$ est alors un groupe fini, en
prenant $B=Br(X_\eta),$ on rentre (via la proposition \ref{Br-specialisation})
dans le cadre des théorèmes principaux \ref{thm10}, \ref{thm15}.
\end{rem}

\begin{rem}
Concernant l'hypothèse arithmétique supposée dans \S \ref{P1},
en pratique, on vérifie souvent que l'obstruction de Brauer-Manin est la seule sur la fibre $X_\theta$ pour tout $\theta$ dans
un ouvert non vide (au lieu d'un certain sous-ensemble hilbertien généralisé) de la base. Cependant, même si l'on
suppose cette hypothèse pour les $\theta$ dans un ouvert non vide,
les preuves ne deviennent pas plus simples, parce que la proposition \ref{Br-specialisation}
est valable seulement pour les $\theta\in \textsf{Hil}.$
\end{rem}


\section{Fibrations au-dessus de $\mathbb{P}^n$}\label{Pn}

Le théorème \ref{thm10} nous permet de démontrer par récurrence des résultats sur une fibration au-dessus
de l'espace projectif $\mathbb{P}^n$ ou une base un peu plus générale.

D'abord, on fait la remarque suivante, qui est souvent utilisée dans cette section:
comme le groupe de Chow des zéro-cycles et le groupe de Brauer sont des invariants birationnels pour les
variétés propres lisses (\cite{Fulton}, Exemple 16.1.11, et \cite{Br}, III \S 7), d'après la fonctorialité de l'accouplement
de Brauer-Manin, la propriété que l'obstruction de Brauer-Manin est la seule au principe de Hasse
(resp. à l'approximation faible/forte)
pour les zéro-cycles de degré $1$ est aussi birationnellement invariante pour les variétés propres lisses.

\medskip
\noindent\textbf{Fibrations au-dessus de $\mathbb{P}^n$}

On considère une variété $X$ projective lisse et géométriquement intègre sur un corps de nombre $k.$
On suppose
que $X$ admet un morphisme dominant $\pi:X\to \mathbb{P}^{n_1}\times\ldots\times\mathbb{P}^{n_r}$
à fibre générique $X_\eta$ (lisse)
géométriquement intègre. On définit le \textit{lieu dégénéré} $D$ comme
l'ensemble des points (schématiques) $P$ au-dessus desquels la fibre
$X_P$ n'est pas géométriquement intègre.
Comme le point générique $\eta$ de $X$ n'appartient pas à l'ensemble constructible $D,$ 
la codimension $codim(D)$ de l'adhérence de Zariski
$\overline{D}$ dans $\mathbb{P}^{n_1}\times\ldots\times\mathbb{P}^{n_r}$ est au moins $1.$
Dans le théorème \ref{Pn-codim2} ci-dessous on va supposer que $codim(D)\geqslant2,$ cependant,
dans les théorèmes \ref{Pn-section}, \ref{Pn-codim1}, d'autres hypothèses sur les fibres sont faites.

\subsection{Cas où les fibres sont géométriquement intègres}
Dans le théorème suivant, le cas où $n=1$ est exactement le théorème \ref{thm10}.

\begin{thm}\label{Pn-codim2}
Soit $\pi:X\to \mathbb{P}^{n_1}\times\ldots\times\mathbb{P}^{n_r}$ une fibration sur
un corps de nombres $k$ avec $X$ une variété projective
lisse et géométriquement intègre.
On suppose que $Br(X_{\bar{\eta}})$ est fini et $Pic(X_{\bar{\eta}})$ est sans
torsion, où $X_{\eta}$ est la fibre générique de $\pi.$

Supposons que $codim(D)\geqslant2.$

On fait l'hypothèse qu'il existe un ouvert non vide $U$ de $\mathbb{P}^{n_1}\times\ldots\times\mathbb{P}^{n_r}$
tel que pour tout point fermé $\theta\in U$ on ait respectivement
\begin{itemize}
\item[(i)]
l'obstruction de Brauer-Manin est la seule au principe de Hasse pour les points rationnels ou pour
les zéro-cycles de degré $1$ sur $X_\theta;$
\item[(ii)]
l'obstruction de Brauer-Manin est la seule à l'approximation faible pour les points rationnels ou pour
les zéro-cycles de degré $1$ sur $X_\theta;$
\item[(iii)]
la condition (ii) et de plus $X_{\eta}$ est rationnellement connexe.
\end{itemize}

Alors, pour les zéro-cycles de degré $1$ sur $X,$ l'obstruction de Brauer-Manin est la seule
\begin{itemize}
\item[(i)]
au principe de Hasse;
\item[(ii)]
à l'approximation faible;
\item[(iii)]
à l'approximation forte.
\end{itemize}
\end{thm}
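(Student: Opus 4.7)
\emph{Proof plan.} The natural approach is induction on $N=n_1+\cdots+n_r=\dim Y$, where $Y=\mathbb{P}^{n_1}\times\cdots\times\mathbb{P}^{n_r}$. The base case $N=1$ is precisely Theorem \ref{thm10}: in dimension one the hypothesis $codim(D)\geq 2$ forces $D=\emptyset$, so every fiber is geometrically integral, and the non-empty open $U\subset\mathbb{P}^1$ of the arithmetic hypothesis trivially contains a generalized Hilbert subset.

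For the induction step ($N>1$), one may assume $n_1\geq 1$. I would fix a $k$-rational point $p\in\mathbb{P}^{n_1}$ and blow it up to obtain a $\mathbb{P}^1$-bundle $\widetilde{\mathbb{P}^{n_1}}=Bl_p(\mathbb{P}^{n_1})\to\mathbb{P}^{n_1-1}$ resolving the linear projection from $p$, whose fibers are the strict transforms of the lines in $\mathbb{P}^{n_1}$ through $p$. Setting $Y'=\mathbb{P}^{n_1-1}\times\mathbb{P}^{n_2}\times\cdots\times\mathbb{P}^{n_r}$ and $\widetilde Y=\widetilde{\mathbb{P}^{n_1}}\times\mathbb{P}^{n_2}\times\cdots\times\mathbb{P}^{n_r}$, one gets a $\mathbb{P}^1$-bundle $\widetilde Y\to Y'$ with $\widetilde Y$ birational to $Y$. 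Let $\widetilde X$ be a smooth projective model of $X\times_Y\widetilde Y$; by the birational invariance of the Brauer--Manin obstruction for zero-cycles among smooth projective varieties, it suffices to prove the theorem for $\widetilde X\to Y'$, whose base has dimension $N-1$, and then invoke the induction hypothesis. I would then verify: (a) the geometric generic fiber of $\widetilde X\to Y'$ still has finite $Br$ and torsion-free $Pic$, by specialization from $X_\eta$ as in \S\ref{specialisation}; (b) in case (iii), the generic fiber remains rationally connected, by Graber--Harris--Starr applied to the $\mathbb{P}^1$-fibration whose generic fiber is birational to $X_\eta$; (c) the new degenerate locus $D'\subset Y'$ satisfies $codim(D')\geq 2$, which follows from upper semi-continuity of fiber dimension, since $\dim\pi_Y^{-1}(D)\leq N-2$ forces $\dim D'\leq N-3$ while $\dim Y'=N-1$.

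The crux is to produce a non-empty open $U'\subset Y'$ such that for each closed $\theta\in U'$ the fiber $\widetilde X_\theta\to\mathbb{P}^1_{k(\theta)}$ satisfies the Brauer--Manin hypothesis; this is where Theorem \ref{thm10} is used in an essential way. For $\theta$ in a suitable non-empty open set, the associated line through $p$ (times the remaining factors) avoids $D$ entirely, thanks to $codim(D)\geq 2$, so every fiber of $\widetilde X_\theta\to\mathbb{P}^1_{k(\theta)}$ is geometrically integral. Moreover, the closed points of a non-empty open subset of $\mathbb{P}^1_{k(\theta)}$ correspond to closed points of the original open $U\subset Y$, so the appropriate hypothesis (i), (ii) or (iii) on the fibers of $X\to Y$ transfers to the fibers of $\widetilde X_\theta\to\mathbb{P}^1_{k(\theta)}$. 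Theorem \ref{thm10} therefore yields the Brauer--Manin conclusion on each $\widetilde X_\theta$, and the induction hypothesis applied to $\widetilde X\to Y'$ completes the proof. The main obstacle will be the careful bookkeeping of specialization statements and of generalized Hilbert subsets versus non-empty opens, so that a single $U'\subset Y'$ can be extracted which witnesses (a), (b) and (c) simultaneously and remains compatible with the inductive application of Theorem \ref{thm10} fiber by fiber.
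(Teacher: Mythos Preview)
Your inductive strategy is correct and is essentially the ``dual'' of the paper's. The paper (in its Step~2) projects $\mathbb{P}^n$ onto $\mathbb{P}^1$ from a linear centre $O\simeq\mathbb{P}^{n-2}$, obtaining $\mathbb{P}^{n-1}$-fibres; it then applies Theorem~\ref{thm10} to the resulting morphism $X'\to\mathbb{P}^1$ and uses the induction hypothesis on each fibre $X'_\theta\to\mathbb{P}^{n-1}_{k(\theta)}$. You instead project from a point, obtaining $\mathbb{P}^1$-fibres, apply the induction hypothesis to $\widetilde X\to Y'$, and use Theorem~\ref{thm10} fibrewise. Your variant has the pleasant feature of treating the single-factor and the product case $\mathbb{P}^{n_1}\times\cdots\times\mathbb{P}^{n_r}$ uniformly, whereas the paper needs a separate Step~3 for products; on the other hand, the paper's choice makes the verification that every fibre of $X'\to\mathbb{P}^1$ is geometrically integral immediate (the generic point of each $\Delta_\theta\simeq\mathbb{P}^{n-1}$ lies in $U$ by the choice of $O$), and pushes the codimension-$2$ bookkeeping to the inductive step via Bertini, where it is standard.

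One point to sharpen: your justification of (a) is not correct as written. The variety $\widetilde X_{\bar\eta'}$ is not a specialisation of $X_{\bar\eta}$; it is the total space of a fibration over $\mathbb{P}^1_{\overline{k(\eta')}}$ whose generic fibre is (birational to) $X_{\bar\eta}$, and the results of \S\ref{specialisation} say nothing directly about $\mathrm{Pic}$ and $\mathrm{Br}$ of such a total space. The paper faces exactly the same issue for $X'_{\bar\eta_1}$ and does not argue it via specialisation either; in case~(iii) it follows from Graber--Harris--Starr plus Lemma~\ref{Pic-tors-Br-fini}, and in cases~(i)--(ii) one falls back on the device of Remark~\ref{rem-B}/\ref{rem-hilbertien}, replacing the hypothesis ``$Br(X_{\bar\eta})$ finite and $Pic(X_{\bar\eta})$ torsion-free'' in Theorem~\ref{thm10} by the choice of a suitable finite subgroup $B\subset Br(X_\eta)$ and tracking it through the induction (as in Harari \cite{Harari3}, \S3). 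You should invoke the same mechanism rather than ``specialisation from $X_\eta$''.
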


\begin{proof}

\emph{Étape 1.}
Le cas où la base est $\mathbb{P}^1$ a été montré dans le théorème \ref{thm10}.

\emph{Étape 2.}
Montrons le théorème avec la base $\mathbb{P}^n$
par récurrence sur $n,$ cette idée a été utilisée par Harari dans \cite{Harari3},
et par Wittenberg dans \cite{WittenbergLNM}, pour la question sur les points rationnels.
On utilise une variante de cette méthode, dans l'argument suivant le rôle du théorème \ref{thm10}
y est crucial.

À partir de maintenant, on fixe un entier $n\geqslant2,$ et on admet le théorème pour $\mathbb{P}^{n-1}.$
En restreignant l'ouvert $U$ dans l'hypothèse du théorème si nécessaire, on peut
supposer que toute fibre $X_\theta$ au-dessus d'un point schématique $\theta$ de $U$ est non vide,
projective, lisse, géométriquement intègre, et rationnellement connexe si $X_{\eta_n}$ est supposée rationnellement
connexe (\cite{Kollar}, 3.11). D'après la discussion au début de \S \ref{specialisation}, on peut aussi supposer
que $Br(X_{\bar{\theta}})$ est fini et $Pic(X_{\bar{\theta}})$ est sans
torsion. De plus, on peut supposer que $\overline{D}\cap U=\emptyset.$
On fixe un sous-espace linéaire $O\simeq\mathbb{P}^{n-2}_k$ de $\mathbb{P}^n$ tel que son point générique soit
dans $U$ (par convention, $\mathbb{P}^0$ est un point $k$-rationnel),
et on fixe un sous-espace linéaire $L\simeq\mathbb{P}^1$
de $\mathbb{P}^n$ disjoint de $O.$ On définit l'application rationnelle $g':\mathbb{P}^n\dashrightarrow L\simeq\mathbb{P}^1$
comme la projection de centre $O$ dans $\mathbb{P}^n,$ d'où on obtient un morphisme $g:\Delta\to\mathbb{P}^1$
tel que $g=g'\circ\epsilon$ avec $\epsilon:\Delta\to\mathbb{P}^n$ l'éclatement de $\mathbb{P}^n$ de centre $O.$
La variété $X'=X\times_{\mathbb{P}^n}\Delta$ est projective, géométriquement intègre, et
birationnelle à la variété $X.$
Si $n=2,$ la variété $X'$ est lisse, car les lieux de singularité de $\pi$ et de $\epsilon$ ne se rencontrent pas,
mais ce n'est pas toujours le cas si $n>2.$
Afin de simplifier les notations, on suppose pour l'instant que $X'$ est une variété lisse, à la fin de l'étape 2, on explique
comment completer l'argument sans cette hypothèse supplémentaire.
D'après la remarque au début de cette section, on se ramène à démontrer
que l'obstruction de Brauer-Manin est la seule pour les zéro-cycles de degré $1$ sur $X'.$

On va appliquer le théorème \ref{thm10} à la fibration $g\circ\pi':X'\to\mathbb{P}^1,$ où
$\pi':X'\to \Delta$ est la projection naturelle,
on vérifie toutes les hypothèses comme suit.
Pour tout point schématique
$\theta\in\mathbb{P}^1,$ la fibre $\Delta_\theta$ est isomorphe à l'espace projectif $\mathbb{P}^{n-1}$ sur
$k(\theta),$ son point générique $\eta(\Delta_\theta)$ est contenu dans l'ouvert $\epsilon^{-1}(U)\subset \Delta$
par construction. La fibre de $\pi':X'\to\Delta$ au-dessus du point $\eta(\Delta_\theta)$ est alors
projective, lisse, géométriquement intègre et satisfait les conditions sur le groupe de Picard et sur le
groupe de Brauer.
Le morphisme $\pi'_\theta:X'_\theta\to\Delta_\theta\simeq\mathbb{P}^{n-1}_{k(\theta)}$ est donc dominant à
fibre générique géométriquement intègre (et rationnellement connexe si $X_{\eta_n}$ est supposée rationnellement connexe),
il satisfait
aussi les conditions sur le groupe de Picard et sur le groupe de Brauer.
Pour tout point schématique $\theta,$
la $k(\theta)$-variété $X'_\theta$ elle-même est alors
géométriquement intègre sur $k(\theta)$ (car $k(\theta)$ est algébriquement fermé dans
$k(\theta)(\Delta_\theta)$ qui est algébriquement fermé dans $k(\theta)(X'_\theta)$);
et elle est rationnellement connexe d'après un résultat de Graber/Harris/Starr \cite[Corollaire 1.3]{GHS}
une fois que $X_{\eta_n}$ est supposée rationnellement connexe.
En particulier, pour $\theta=\eta_1$ le point générique de la base $\mathbb{P}^1,$
les hypothèses sur la fibration $g\circ\pi':X'\to\mathbb{P}^1$
que $Pic(X'_{\bar{\eta}_1})$ est sans torsion, que $Br(X'_{\bar{\eta}_1})$ est fini, et (H CH0)
sont vérifiées, \textit{cf.} la proposition \ref{H-CH0}.
D'après le théorème de Bertini, pour presque tout point fermé
$\theta\in\mathbb{P}^1,$ la fibre $g^{-1}(\theta)=\Delta_\theta$
et $\epsilon^{-1}(\overline{D})$ se rencontrent transversalement, et de plus
la fibre $X'_\theta$ est une variété lisse (on rappelle que $X'$ est lisse).
Pour ces points fermés $\theta,$
on considère $\pi'_\theta:X'_\theta\to\Delta_\theta\simeq\mathbb{P}^{n-1}_{k(\theta)}.$
Toute fibre au-dessus d'un point fermé de
l'ouvert non vide $\Delta_\theta\cap\epsilon^{-1}(U\setminus\{O\})\subset \Delta_\theta$ vérifie
le principe de Hasse (resp. l'approximation faible).
Le lieu de dégénéré $\overline{D}\cap\Delta_\theta$ reste
de codimension au moins $2$ dans $\Delta_\theta\simeq\mathbb{P}_{k(\theta)}^{n-1}.$
D'après l'hypothèse de récurrence, l'obstruction de Brauer-Manin est la
seule pour les zéro-cycles de degré $1$ sur $X'_\theta$ pour ces points fermés $\theta.$
On arrive 'a la conclusion pour le cas où la base est $\mathbb{P}^n$ d'après le théorème \ref{thm10}.

Généralement $X'$ n'est pas une variété lisse.
Notons que $X\to\mathbb{P}^n$ est lisse au-dessus de $U\subset\mathbb{P}^n,$
l'ouvert $X'\times_\Delta\epsilon^{-1}(U)\subset X'$ est alors lisse,
de plus l'ouvert $X'\times_\Delta\epsilon^{-1}(\mathbb{P}^n\setminus O)\subset X'$ est aussi lisse car il est isomorphe
à l'ouvert $X\times_{\mathbb{P}^n}(\mathbb{P}^n\setminus O)\subset X.$
D'après Hironaka, il existe un morphisme
birationnel $\sigma:X''\to X'$ tel que $X''$ soit une variété lisse, et de plus
les différences entre $X''$ et $X'$ se trouvent au-dessus du fermé $O\cap(\mathbb{P}\setminus U),$
ce dernier fermé est de dimension au plus $n-3.$
On fait le même argument
avec $X''$ au lieu de $X'.$ Notons que, en dehors d'un fermé de $\Delta_\theta\simeq\mathbb{P}^{n-1}_{k(\theta)}$
de codimension au moins $(n-1)-(n-3)=2,$ les fibres de $X''_\theta\to\Delta_\theta$
et de $X'_\theta\to\Delta_\theta$ sont les mêmes.
Le théorème \ref{thm10}
s'applique également à la fibration $X''\to\mathbb{P}^1,$ l'argument fonctionne alors.

\emph{Étape 3.}
Montrons ici seulement le cas où la base est $\mathbb{P}^s\times\mathbb{P}^t,$
on peut le généraliser sans difficulté au cas où la base est
$\mathbb{P}^{n_1}\times\ldots\times\mathbb{P}^{n_r}$ par récurrence.

On note $p:\mathbb{P}^s\times_k\mathbb{P}^t\to\mathbb{P}^s$ la projection sur le premier facteur.
On va appliquer le théorème pour la fibration
$p\circ\pi:X\to\mathbb{P}^s$, on vérifie toutes les hypothèses comme suit.
On note $D_s$ le lieu dégénéré de $\mathbb{P}^s.$
Pour tout point schématique $\theta\in\mathbb{P}^s,$ la fibre $X_\theta$ admet un morphisme
$\pi_\theta:X_\theta\to p^{-1}(\theta)=\theta\times_k\mathbb{P}^t.$ Comme $codim(D)\geqslant2,$
si $\theta\in\mathbb{P}^s$ est de codimension 0 ou 1,
le point générique de $p^{-1}(\theta)$ n'est pas contenu dans $D.$ La fibre générique de $\pi_\theta$
est alors géométriquement intègre, d'où,
en regardant leurs corps des fonctions, $X_\theta$ est géométriquement intègre sur $k(\theta).$
Autrement dit, $\theta$ n'appartient pas à $D_s,$ donc $codim(D_s)\geqslant2.$

De plus, soit $\eta_s$ le point générique de la base $\mathbb{P}^s,$ la fibre générique  de
$\pi_{\eta_s}:X_{\eta_s}\to\eta_s\times_k\mathbb{P}^t$
est exactement la fibre générique $X_\eta$ de $\pi,$ alors les conditions sur le groupe de Picard
et sur le groupe de Brauer sont satisfaites. De plus,
une fois que $X_\eta$ est supposée rationnellement connexe,
d'après un résultat de Graber/Harris/Starr \cite[Corollaire 1.3]{GHS}, $X_{\eta_s}$ est une variété
rationnellement connexe sur $k(\eta_s).$

Afin de conclure, il reste à vérifier que

\noindent($\star$) il existe un ouvert non vide
$U_s$ de $\mathbb{P}^s$ tel que l'obstruction de Brauer-Manin soit la seule
pour les zéro-cycles de degré $1$ sur $X_\theta$ pour tout point fermé $\theta\in U_s.$

À la fibration $\pi_\theta:X_\theta\to p^{-1}(\theta)\simeq\mathbb{P}^t_{k(\theta)}(\theta\in U_s),$
on va appliquer encore une fois ce théorème pour obtenir ($\star$). Vérifions les hypothèses sur
$X_\theta\to p^{-1}(\theta)$ comme suit.

Par le théorème de Bertini, il existe un ouvert non vide $U_s$ de $\mathbb{P}^s,$ tel que
pour tout point fermé $\theta\in U_s$ la fibre $X_\theta$ soit une $k(\theta)$-variété lisse, et tel que
$p^{-1}(\theta)$ et $\overline{D}$ se rencontrent transversalement,
d'où $p^{-1}(\theta)\cap\overline{D}$ reste de codimension au moins $2$ dans $p^{-1}(\theta)\simeq\mathbb{P}^t_{k(\theta)}.$

Quitte à restreindre
l'ouvert non vide $U\subset\mathbb{P}^s\times_k\mathbb{P}^t$ mentionné dans l'hypothèse,
on peut supposer que
toute fibre au-dessus d'un point schématique de $U$ est lisse et géométriquement intègre
(et rationnellement connexe si $X_\eta$ est supposée rationnellement connexe), de plus
les conditions sur le groupe de Picard et sur le groupe de Brauer sont satisfaites.
Quitte à restreindre $U_s,$ on peut supposer que pour tout point fermé $\theta\in U_s,$
$U_\theta=U\cap p^{-1}(\theta)$ est un ouvert non vide de $p^{-1}(\theta).$
Donc le point générique de $p^{-1}(\theta)=\theta\times\mathbb{P}^t$ dans $\mathbb{P}^s\times\mathbb{P}^t$
appartient à $U,$ la fibre générique de $\pi_\theta:X_\theta\to p^{-1}(\theta)$ est alors
géométriquement intègre (et rationnellement connexe si $X_\eta$ est supposée rationnellement connexe), par conséquent
$X_\theta$ est aussi géométriquement intègre sur $k(\theta)$ pour tout point fermé $\theta\in U_s.$
De plus, on sait que toute fibre de
$\pi_\theta:X_\theta\to p^{-1}(\theta)$
au-dessus d'un point fermé de $U_\theta$ satisfait alors l'hypothèse arithmétique:
l'obstruction de Brauer-Manin est la seule pour les points rationnels ou pour les zéro-cycles de degré $1.$
Ceci nous permet d'appliquer ce théorème à chaque fibration
$\pi_\theta:X_\theta\to p^{-1}(\theta)\simeq\mathbb{P}^t_{k(\theta)}(\theta\in U_s)$ et obtenir ($\star$),
qui complète la preuve.
\end{proof}

\begin{rem}\label{rem-hilbertien}
Comme indiqué dans la remarque \ref{rem-B}, on peut
également démontrer un énoncé similaire avec un sous-groupe $B\subset Br(X_{\eta})$
(satisfaisant une condition sur la finitude).
Pour ceci, on doit adapter cet argument de la récurrence
avec un contrôle de $B,$ voir \S 3 de l'article de
Harari \cite{Harari3} pour plus de détails.
En particulier, si $Br(X_{\bar{\eta}})$ est fini et si $Pic(X_{\bar{\eta}})$
est sans torsions, en prenant $B=Br(X_\eta)$ on rentre dans le cadre du théorème.
Dans le même article, Harari a adapté
l'argument de la récurrence avec un sous-ensemble hilbertien.
Autrement dit, la même conclusion reste valable si l'on remplace
l'ouvert $U$ dans l'hypothèse par
un sous-ensemble hilbertien généralisé $\textsf{Hil}$ grâce au théorème \ref{thm10}
appliqué à l'étape $n=1.$
\end{rem}

\subsection{Cas où la fibre générique admet un zéro-cycle de degré $1$}
En comparant avec le théorème \ref{Pn-codim2}, si au lieu de supposer la condition sur $codim(D),$
on suppose que la fibre générique contient un zéro-cycle de degré $1,$ on a le résultat similaire suivant.
Dans ce cas-là, seulement l'approximation faible/forte est intéressante, le principe de
Hasse vaut automatiquement pour les zéro-cycles de degré $1.$ Le cas où la base $Y$ est la droite projective
est exactement le théorème \ref{thm15}.

\begin{thm}\label{Pn-section}
Soit $\pi:X\to Y$ une fibration sur un corps de nombres $k$ avec $X$ une variété projective
lisse et géométriquement intègre et $Y$ une variété $k$-rationnelle de dimension au moins $1$
(\textit{i.e.} son corps des fonctions est purement transcendant sur $k$).
On suppose que $Br(X_{\bar{\eta}})$ est fini et $Pic(X_{\bar{\eta}})$ est sans
torsion, où $X_{\eta}$ est la fibre générique de $\pi.$

Supposons que $ind(X_{\eta}/k(\eta))=1.$

On fait l'hypothèse qu'il existe un ouvert non vide $U$ de $Y$ tel que pour tout point fermé
$\theta\in U$ on ait: sur $X_\theta$
l'obstruction de Brauer-Manin est la seule à l'approximation faible
pour les zéro-cycles de degré $1.$

Alors, l'obstruction de Brauer-Manin est la seule à
l'approximation faible
pour les zéro-cycles de degré $1$ sur $X.$

Si de plus $X_\eta$ est rationnellement connexe, on a la même conclusion pour l'approximation forte.
\end{thm}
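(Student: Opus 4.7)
La preuve proc\`ede par r\'ecurrence sur $n=\dim(Y)$ en suivant la strat\'egie de la d\'emonstration de \ref{Pn-codim2}. D'abord, comme $Y$ est $k$-rationnelle, on se ram\`ene \`a $Y=\mathbb{P}^n$ en r\'esolvant les ind\'eterminations d'une application rationnelle $X\dashrightarrow Y\dashrightarrow\mathbb{P}^n,$ ce qui fournit une vari\'et\'e projective lisse $\tilde{X}$ birationnelle \`a $X$ munie d'un morphisme dominant $\tilde{X}\to\mathbb{P}^n.$ Toutes les hypoth\`eses (conditions sur $Br$ et $Pic$ de la fibre g\'en\'erique g\'eom\'etrique, indice $1,$ et hypoth\`ese arithm\'etique sur les fibres au-dessus d'un ouvert non vide) sont pr\'eserv\'ees par invariance birationnelle et par restriction \`a l'ouvert de $\mathbb{P}^n$ o\`u la modification birationnelle induit un isomorphisme fibre-\`a-fibre avec $X\to Y.$ Le cas de base $n=1$ est exactement le th\'eor\`eme \ref{thm15}.

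Supposons $n\geqslant2$ et le th\'eor\`eme \'etabli pour $\mathbb{P}^{n-1}.$ On reprend la construction g\'eom\'etrique de l'\'etape 2 de la preuve de \ref{Pn-codim2}: on \'eclate $\mathbb{P}^n$ en un sous-espace lin\'eaire $O\simeq\mathbb{P}^{n-2}$ g\'en\'erique, obtenant $\epsilon:\Delta\to\mathbb{P}^n$ et $g:\Delta\to\mathbb{P}^1.$ Apr\`es r\'esolution \'eventuelle des singularit\'es de $X\times_{\mathbb{P}^n}\Delta,$ on dispose d'une vari\'et\'e projective lisse $X''$ birationnelle \`a $X,$ munie de la fibration $g\circ\pi':X''\to\mathbb{P}^1,$ \`a laquelle on compte appliquer le th\'eor\`eme \ref{thm15}.

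La finitude de $Br(X''_{\bar{\eta}_1})$ et l'absence de torsion de $Pic(X''_{\bar{\eta}_1})$ r\'esultent des hypoth\`eses analogues sur $X_{\bar{\eta}}$ via les r\'esultats de sp\'ecialisation rappel\'es au \S \ref{specialisation}. L'indice $1$ de $X''_{\eta_1}$ sur $k(\eta_1)$ se voit ainsi: $X''_{\eta_1}\to\mathbb{P}^{n-1}_{k(\eta_1)}$ a pour fibre g\'en\'erique $X_\eta,$ d'indice $1$ sur $k(\eta_1)(\mathbb{P}^{n-1})=k(\mathbb{P}^n);$ comme $\mathbb{P}^{n-1}_{k(\eta_1)}$ poss\`ede des $k(\eta_1)$-points rationnels, en d\'epla\c{c}ant (lemme \ref{deplacement}) un z\'ero-cycle de degr\'e $1$ de la base dans le lieu de platitude des adh\'erences des points constituant un z\'ero-cycle de degr\'e $1$ de $X_\eta,$ puis en sp\'ecialisant ces adh\'erences, on construit un z\'ero-cycle de degr\'e $1$ sur $X''_{\eta_1}$ sur $k(\eta_1).$

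Il reste \`a v\'erifier l'hypoth\`ese arithm\'etique: pour $\theta$ dans un sous-ensemble hilbertien g\'en\'eralis\'e de $\mathbb{P}^1,$ l'obstruction de Brauer-Manin est la seule \`a l'approximation faible pour les z\'ero-cycles de degr\'e $1$ sur $X''_\theta.$ On applique l'hypoth\`ese de r\'ecurrence \`a $X''_\theta\to\Delta_\theta\simeq\mathbb{P}^{n-1}_{k(\theta)},$ dont les hypoth\`eses se v\'erifient pour $\theta$ dans un ouvert non vide de $\mathbb{P}^1$ (donc dans un sous-ensemble hilbertien): conditions sur $Br$ et $Pic$ par sp\'ecialisation, indice $1$ de sa fibre g\'en\'erique par l'argument analogue appliqu\'e au point g\'en\'erique de $\epsilon(\Delta_\theta)\subset\mathbb{P}^n$ (pour $\theta$ g\'en\'erique, ce point est dans le lieu de lissit\'e de $X\to\mathbb{P}^n$), hypoth\`ese arithm\'etique h\'erit\'ee de celle sur $X$ via l'ouvert non vide $\epsilon(\Delta_\theta)\cap U.$ L'application de \ref{thm15} fournit la conclusion pour l'approximation faible; pour l'approximation forte sous l'hypoth\`ese de connexit\'e rationnelle, on utilise la proposition \ref{H-CH0}. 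L'obstacle principal r\'eside dans la propagation simultan\'ee des trois conditions le long de la r\'ecurrence, en particulier celle de l'indice $1,$ qui se fait par un argument de sp\'ecialisation combin\'e aux lemmes de d\'eplacement, reposant sur un choix suffisamment g\'en\'erique du centre $O$ via le th\'eor\`eme de Bertini.
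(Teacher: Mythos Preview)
Your proposal is correct and follows essentially the same route as the paper: reduce to $Y=\mathbb{P}^n$ by birational invariance, then run the blow-up induction of the proof of Theorem~\ref{Pn-codim2}, propagating the index-$1$ condition so that the induction hypothesis applies to $X''_\theta\to\Delta_\theta\simeq\mathbb{P}^{n-1}_{k(\theta)}$.

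There is one genuine difference worth recording. At the outer level you apply Theorem~\ref{thm15} to $X''\to\mathbb{P}^1$, using that $X''_{\eta_1}$ has index~$1$ over $k(\eta_1)$. The paper instead applies Theorem~\ref{thm10}(ii)(iii), which requires every fibre of $X''\to\mathbb{P}^1$ to contain a geometrically integral multiplicity-one component; this is available because $O$ is chosen with generic point in $U$, so every hyperplane $H_\theta\supset O$ meets $U$ and hence has generic point outside the degeneracy locus. Both choices are valid; yours is arguably more natural here since the theorem's distinguishing hypothesis is precisely $\mathrm{ind}(X_\eta)=1$, while the paper's choice reuses the geometric observation already made in the proof of Theorem~\ref{Pn-codim2}. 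Either way, one must still carry the index-$1$ condition down to the generic fibre of $X''_\theta\to\Delta_\theta$ to feed the induction, which both arguments do via the spreading-out/specialisation of a degree-$1$ zero-cycle that you describe and that the paper phrases as ``toute fibre au-dessus d'un point sch\'ematique de $U$ est d'indice~$1$''.

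A small imprecision: your appeal to \S\ref{specialisation} for the finiteness of $Br(X''_{\bar{\eta}_1})$ and torsion-freeness of $Pic(X''_{\bar{\eta}_1})$ is not literally a specialisation statement, since $X''_{\eta_1}$ is not a fibre of $X\to\mathbb{P}^n$. The paper is equally brief at this point; the honest fix is either to invoke rational connectedness of $X''_{\eta_1}$ together with Lemma~\ref{Pic-tors-Br-fini} (as the paper does implicitly in case~(iii)), or, in general, to carry through the recursion a finite subgroup $B\subset Br(X_\eta)$ as in Remarks~\ref{rem-B} and~\ref{rem-hilbertien}.
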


\begin{proof}
Tout d'abord, on se ramène au cas où  $Y=\mathbb{P}^n$ pour un certain $n\geqslant1.$
En fait, il existe un ouvert non vide $U_1$ (resp. $U_2$)
de $Y$ (resp. de $\mathbb{P}^n$ où $n=dim(Y)\geqslant1$),
et un isomorphisme $U_1\buildrel\simeq\over\to U_2.$ D'après Nagata et Hironaka, il existe une compactification
$\pi':X'\to\mathbb{P}^n$ du morphisme non propre $X\times_{Y}U_1\to U_1\simeq U_2\to\mathbb{P}^n$
telle que $X'$ est une variété régulière.
Les variétés $X'$ et $X$ sont birationnellement équivalentes, les fibres génériques de $\pi$ et de $\pi'$
s'identifient, il donc suffit de démontrer le théorème pour le cas où $Y=\mathbb{P}^n.$

Le cas où $n=1$ est le théorème \ref{thm15}.
La méthode de la récurrence de la preuve du théorème \ref{Pn-codim2}
fonctionne. En fait,
on peut choisir l'ouvert non vide $U$ de $\mathbb{P}^n$ tel que de plus toute fibre au-dessus d'un
point schématique de $U$ soit d'indice $1$ d'après le même argument que le paragraphe 2.1,
la fibre générique de $\pi'_\theta:X'_\theta\to\Delta_\theta$ est donc d'indice $1$ pour tout
point fermé $\theta$ de la base $\mathbb{P}^1.$ On conclut en appliquant le théorème \ref{thm10}(ii)(iii) à la fibration
$g\circ\pi':X'\to\mathbb{P}^1.$
\end{proof}

\begin{rem}\label{section-vs-index}
(i) La même remarque que la remarque \ref{rem-hilbertien} s'applique.

(ii) Harari a montré sans faire la récurrence un énoncé analogue pour les points rationnels avec une base plus générale
qu'une variété $k$-rationnelle, \cite[Théorème 4.3.1]{Harari}. Mais sa méthode ne fonctionne pas pour
les zéro-cycles.
\end{rem}

\subsection{Cas où toute fibre est abélienne-scindée}
Dans le théorème suivant, le cas où $n=1$ est montré par
Colliot-Thélène/Skorobogatov/Swinnerton-Dyer dans \cite[Théorème 4.1]{CT-Sk-SD}.
Le cas général est mentionné par Wittenberg dans la première remarque de la page 135 de \cite{WittenbergLNM},
on confirme cette remarque à l'aide du théorème \ref{Pn-codim2}.

\begin{thm}\label{Pn-codim1}
Soit $\pi:X\to \mathbb{P}^n$ une fibration sur un corps de nombres $k$ avec $X$ une variété projective
lisse et géométriquement intègre.
On suppose que $Br(X_{\bar{\eta}_n})$ est fini et $Pic(X_{\bar{\eta}_n})$ est sans
torsion, où $X_{\eta_n}$ est la fibre générique de $\pi.$

On fait l'hypothèse\ \\
\noindent\emph{\textsc{(Abélienne-Scindée)}} pour tout point $\theta\in\mathbb{P}^n$ de codimension $1,$
il existe une composante irréductible $Y$
de la fibre $X_\theta$ de multiplicité $1$ telle que la fermeture algébrique de $k(\theta)$ dans le corps
de fonctions de $Y$ est une extension abélienne de $k(\theta).$

Supposons qu'il existe un ouvert non vide $U$ de $\mathbb{P}^n$ tel que pour tout point fermé
$\theta\in U,$ la fibre $X_\theta$ satisfait respectivement
\begin{itemize}
\item[(i)]
le principe de Hasse pour les points rationnels ou pour les zéro-cycles de degré $1;$
\item[(ii)]
l'approximation faible pour les points rationnels ou pour les zéro-cycles de degré $1;$
\item[(iii)]
la condition (ii) et de plus $X_{\eta_n}$ est rationnellement connexe.
\end{itemize}

Alors, pour les zéro-cycles de degré $1$ sur $X,$ respectivement l'obstruction de Brauer-Manin est la seule
\begin{itemize}
\item[(i)]
au principe de Hasse;
\item[(ii)]
à l'approximation faible;
\item[(iii)]
à l'approximation forte.
\end{itemize}
\end{thm}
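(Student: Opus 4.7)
Mon plan est de proc\'eder par r\'ecurrence sur $n$, en suivant la strat\'egie de la preuve du th\'eor\`eme \ref{Pn-codim2}. Le cas $n=1$ est pr\'ecis\'ement le th\'eor\`eme 4.1 de \cite{CT-Sk-SD}. On suppose d\'esormais $n\geqslant 2$ et le r\'esultat acquis pour $\mathbb{P}^{n-1}$. Je reprends la construction par \'eclatement utilis\'ee dans \ref{Pn-codim2}: on fixe un sous-espace lin\'eaire $O\simeq\mathbb{P}^{n-2}_k$ dont le point g\'en\'erique appartient \`a $U$, une droite $L\simeq\mathbb{P}^1$ disjointe de $O$; on note $\epsilon:\Delta=\mathrm{Bl}_O(\mathbb{P}^n)\to\mathbb{P}^n$ l'\'eclatement et $g:\Delta\to L\simeq\mathbb{P}^1$ la projection r\'esolue. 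On forme $X'=X\times_{\mathbb{P}^n}\Delta$ et l'on d\'esingularise $\sigma:X''\to X'$ si n\'ecessaire (comme \`a la fin de l'\'etape 2 de la preuve de \ref{Pn-codim2}), obtenant une vari\'et\'e lisse $X''$ birationnelle \`a $X$. Puisque la conclusion est un invariant birationnel pour les vari\'et\'es propres lisses, il suffit de l'\'etablir pour $X''$, et l'on applique alors le cas $n=1$ \`a la fibration compos\'ee $g\circ\pi':X''\to\mathbb{P}^1$.

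Les v\'erifications des hypoth\`eses du cas $n=1$ pour $X''\to\mathbb{P}^1$ sont les suivantes. Les conditions sur $Pic$ et $Br$ de la fibre g\'en\'erique se transmettent comme dans \ref{Pn-codim2}. Pour Ab\'elienne-Scind\'ee en un point ferm\'e $\theta\in\mathbb{P}^1$: la fibre $X''_\theta$ est birationnelle \`a $X_{H_\theta}$, o\`u $H_\theta\subset\mathbb{P}^n_{k(\theta)}$ est l'hyperplan passant par $O$ correspondant \`a $\theta$; le point g\'en\'erique $\xi_\theta$ de $H_\theta$ est de codimension $1$ dans $\mathbb{P}^n$, et l'extension $k(\xi_\theta)=k(H_\theta)$ de $k(\theta)$ est purement transcendante, de sorte que la fermeture alg\'ebrique (ab\'elienne par hypoth\`ese) de $k(\xi_\theta)$ dans le corps des fonctions d'une composante de multiplicit\'e $1$ de $X_{\xi_\theta}$ fournit bien une fermeture ab\'elienne sur $k(\theta)$. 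Pour l'hypoth\`ese arithm\'etique, on applique l'hypoth\`ese de r\'ecurrence \`a la fibration $X''_\theta\to\Delta_\theta\simeq\mathbb{P}^{n-1}_{k(\theta)}$, pour $\theta$ g\'en\'erique: l'ouvert non vide requis est $U\cap\Delta_\theta$ (non vide pour $\theta$ g\'en\'erique par Bertini), et les conditions de Picard/Brauer pour la fibre g\'en\'erique se v\'erifient de fa\c{c}on standard.

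La difficult\'e principale r\'eside dans la v\'erification d'Ab\'elienne-Scind\'ee pour $X''_\theta\to\Delta_\theta$ en tout point $\tau$ de codimension $1$ de $\Delta_\theta$. Si $\tau$ n'appartient pas \`a l'image inverse du lieu d\'eg\'en\'er\'e $D$ de $X\to\mathbb{P}^n$, la fibre est g\'eom\'etriquement int\`egre et la condition est triviale. Sinon, $\tau$ est un point de codimension $1$ d'une composante irr\'eductible $D_i$ de $D$; la composante privil\'egi\'ee de $X_{D_i}$ correspond \`a un rev\^etement \'etale ab\'elien (g\'en\'eriquement) $\tilde D_i\to D_i$ donn\'e par l'hypoth\`ese Ab\'elienne-Scind\'ee sur $X\to\mathbb{P}^n$, et il faut montrer que sa restriction \`a $D_i\cap H_\theta$ est encore connexe et ab\'elienne, pour $\theta$ g\'en\'erique. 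Ceci r\'esulte d'un argument de type Bertini-Lefschetz pour les rev\^etements \'etales; la composante sp\'ecialis\'ee de $X_\tau$ est alors de multiplicit\'e $1$ et admet la fermeture alg\'ebrique ab\'elienne voulue sur $k(\tau)$. Comme dans la preuve de \ref{Pn-codim2}, il faudra traiter avec soin les singularit\'es \'eventuelles de $X'$ via Hironaka et s'assurer que les sp\'ecialisations \`a travers $\sigma$ n'affectent pas les propri\'et\'es birationnelles utilis\'ees, les modifications n'ayant lieu qu'en codimension au moins $2$ dans la base.
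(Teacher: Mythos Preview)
Your direct induction does not close. You apply ``le cas $n=1$'' (that is, \cite[Th\'eor\`eme~4.1]{CT-Sk-SD}) to the fibration $X''\to\mathbb{P}^1$, and to verify its arithmetic hypothesis on the fibres $X''_\theta$ you invoke the inductive hypothesis on $X''_\theta\to\Delta_\theta\simeq\mathbb{P}^{n-1}_{k(\theta)}$. But the inductive hypothesis is the present theorem for $\mathbb{P}^{n-1}$, and its \emph{conclusion} is only that l'obstruction de Brauer-Manin est la seule for zero-cycles of degree $1$ on $X''_\theta$; it does not say that $X''_\theta$ satisfies the Hasse principle or weak approximation outright. Yet \cite[Th\'eor\`eme~4.1]{CT-Sk-SD} requires precisely HP/WA on the fibres, not merely that BM is the only obstruction. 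The output of step $n-1$ therefore does not match the input needed at step $n$, and the recursion stalls.

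The paper avoids this mismatch by a different route: it follows Wittenberg's proof of \cite[Th\'eor\`eme~3.4]{WittenbergLNM}, which uses the abelian splitting data along the codimension-$1$ degenerate divisors to construct an auxiliary fibration, birational to $X$, over a higher-dimensional $k$-rational base whose degenerate locus has codimension at least $2$; one then applies Th\'eor\`eme~\ref{Pn-codim2} once (with the subgroup $B$ as in Remarque~\ref{rem-B}), rather than inducting on $n$. Your scheme can in fact be repaired by a small twist: choose $O$ generically so that no irreducible component of $\overline{D}_{\bar{k}}$ is a hyperplane containing $O_{\bar{k}}$; then every fibre of $X''\to\mathbb{P}^1$ is already geometrically integral, and one may invoke Th\'eor\`eme~\ref{thm10} in place of \cite[Th\'eor\`eme~4.1]{CT-Sk-SD}. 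Since \ref{thm10} only asks that BM be the sole obstruction on the fibres, the inductive output now matches, and your verification of Ab\'elienne-Scind\'ee for the sub-fibrations $X''_\theta\to\Delta_\theta$ (which you rightly flag as the crux) becomes the genuine content of the argument.
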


\begin{proof}
On reprend la preuve du théorème 3.4 de Wittenberg \cite{WittenbergLNM}, il suffit de remplacer le théorème 3.25 de
\cite{WittenbergLNM} (\cite[Théorème 1]{Harari3}),
dont on prend $B$ un sous-groupe fini engendrant $Br(X_{\eta_n})$ modulo $Br(k(\eta_n)),$
par sa version pour les zéro-cycle: le théorème \ref{Pn-codim2} ci-dessus.
\end{proof}


\section{Quelques applications}\label{appl}

\noindent\textbf{Fibrés en variétés de Severi-Brauer/en coniques}

Un fibré en variétés de Severi-Brauer est un morphisme dominant $X\to Y$ dont la fibre générique
est une variété de Severi-Brauer définie sur le corps de fonctions $k(Y).$
Un tel fibré satisfait l'hypothèse \emph{\textsc{(Abélienne-Scindée)}}, \textit{cf.} la preuve
du corollaire 3.6 de \cite[pages 117-118]{WittenbergLNM}.

Il résulte du théorème \ref{Pn-codim1} la conséquence suivante.

\begin{cor}\label{application-fibre-en-SB}
L'obstruction de Brauer-Manin est la seule au principe de Hasse et à l'approximation forte
pour les zéro-cycles de degré $1$ sur
toute variété propre lisse et géométriquement intègre birationnellement équivalente à un fibré
en variétés de Severi-Brauer (en particulier, fibré en coniques) au-dessus de l'espace projectif
sur un corps de nombres.
\end{cor}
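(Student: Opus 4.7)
Le plan est d'appliquer directement le th�or�me \ref{Pn-codim1} apr�s une r�duction birationnelle standard. Comme l'obstruction de Brauer-Manin pour les z�ro-cycles est un invariant birationnel des vari�t�s propres lisses (propri�t� rappel�e au d�but de \S \ref{Pn}), on se ram�nera au cas o� $\pi:X\to\mathbb{P}^n$ est lui-m�me un fibr� en vari�t�s de Severi-Brauer avec $X$ propre, lisse et g�om�triquement int�gre, quitte � remplacer un mod�le par une r�solution des singularit�s fournie par Hironaka.

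Il s'agira alors de v�rifier les hypoth�ses du th�or�me \ref{Pn-codim1}. D'abord, la fibre g�n�rique g�om�trique $X_{\bar{\eta}_n}$ est un espace projectif par d�finition d'une vari�t� de Severi-Brauer, d'o� $Pic(X_{\bar{\eta}_n})\simeq\mathbb{Z}$ est sans torsion, $Br(X_{\bar{\eta}_n})=0$ est fini, et $X_{\eta_n}$ est rationnellement connexe (ce dont on aura besoin pour la partie (iii) du th�or�me). Ensuite, la condition (Ab�lienne-Scind�e) pour les fibr�s en vari�t�s de Severi-Brauer r�sulte de l'analyse des composantes de multiplicit� un des fibres d�g�n�r�es effectu�e par Wittenberg dans la preuve du corollaire 3.6 de \cite[pp.~117--118]{WittenbergLNM}; on invoquera ce r�sultat sans le refaire.

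Reste � v�rifier l'hypoth�se arithm�tique sur un ouvert non vide $U\subset\mathbb{P}^n$: sur le lieu de lissit� de $\pi$, chaque fibre au-dessus d'un point ferm� $\theta\in U$ est une vari�t� de Severi-Brauer lisse d�finie sur le corps de nombres $k(\theta)$. Une telle vari�t� satisfait le principe de Hasse pour les points rationnels: c'est un corollaire classique du th�or�me de Hasse--Brauer--Noether pour le groupe de Brauer d'un corps de nombres, appliqu� � la classe de la vari�t� de Severi-Brauer dans $Br(k(\theta))$. Elle satisfait �galement l'approximation faible: soit elle n'admet aucun point rationnel (auquel cas la propri�t� est tautologique), soit elle en admet un, et devient alors isomorphe � un espace projectif sur $k(\theta)$ qui satisfait trivialement l'approximation faible. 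Les hypoth�ses (i) et (iii) du th�or�me \ref{Pn-codim1} seront donc satisfaites, ce qui entra�nera respectivement le principe de Hasse et l'approximation forte pour les z�ro-cycles de degr� $1$ sur $X$. L'unique point technique non trivial est la v�rification de la condition (Ab�lienne-Scind�e), qui n�cessite une compr�hension fine de la g�om�trie des fibres non lisses; il est heureusement d�j� trait� dans la litt�rature.
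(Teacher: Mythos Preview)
Your proposal is correct and follows exactly the same approach as the paper: reduce birationally, then apply Th\'eor\`eme~\ref{Pn-codim1} after noting that the condition \textsc{(Ab\'elienne-Scind\'ee)} for Severi-Brauer bundles is established in \cite[pp.~117--118]{WittenbergLNM} and that the smooth fibres satisfy the Hasse principle and weak approximation for rational points. The paper's own proof is the single sentence ``Il r\'esulte du th\'eor\`eme~\ref{Pn-codim1}'' together with the reference to Wittenberg just before the statement; you have simply spelled out the verification of each hypothesis.
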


La première étude du principe de Hasse pour les zéro-cycles de degré $1$
sur un fibré en coniques au-dessus de $\mathbb{P}^1$ est
due à Salberger \cite{Salberger}, il montre que s'il existe des zéro-cycles de degré $1$ localement partout avec
le groupe de Brauer algébrique nul, il existe un zéro-cycle de degré $1$ global.
Le cas d'un fibré en variétés de Severi-Brauer au-dessus de $\mathbb{P}^1$ est montré par
Colliot-Thélène/Swinnerton-Dyer dans \cite{CT-SD}. Il est généralisé ensuite par
Colliot-Thélène/Skorobogatov/Swinnerton-Dyer \cite{CT-Sk-SD}, Frossard \cite{Frossard},
Wittenberg \cite{Wittenberg}.
D'autre part, la proposition analogue pour les points rationnels est établie par Wittenberg dans
\cite{WittenbergLNM}, Corollaire 3.6, en admettant l'hypothèse de Schinzel.

\bigskip
\noindent\textbf{Fibrés en surfaces de Châtelet}

Un fibré en surfaces de Châtelet au-dessus de $\mathbb{P}^n$ est une variété projective lisse
et géométriquement intègre $X$ munie d'un morphisme
dominant $X\to\mathbb{P}^n$ à fibre générique une surface de Châtelet.

\begin{cor}\label{fibre en surfaces de chatelet}
L'obstruction de Brauer-Manin est la seule au principe de Hasse et à l'approximation forte
pour les zéro-cycles de degré $1$ sur tout fibré en surfaces de Châtelet au-dessus de l'espace projectif
sur un corps de nombres.
\end{cor}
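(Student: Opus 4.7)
La d\'emonstration propos\'ee consiste \`a appliquer le th\'eor\`eme \ref{Pn-codim1} \`a la fibration $X\to\mathbb{P}^n,$ en utilisant une variante avec sous-groupe de Brauer-Manin dans l'esprit des remarques \ref{rem-B} et \ref{rem-hilbertien}. Je suivrai les grandes lignes du corollaire \ref{application-fibre-en-SB} pour les fibr\'es en vari\'et\'es de Severi-Brauer et du corollaire 3.6 de Wittenberg \cite{WittenbergLNM} pour les points rationnels (ce dernier utilisant Schinzel, hypoth\`ese superflue ici gr\^ace au th\'eor\`eme \ref{Pn-codim2}).

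Les hypoth\`eses g\'eom\'etriques, \og $Br(X_{\bar{\eta}})$ fini et $Pic(X_{\bar{\eta}})$ sans torsion\fg, r\'esultent directement du lemme \ref{Pic-tors-Br-fini}, puisque la fibre g\'en\'erique $X_\eta$ est une surface de Ch\^atelet sur $k(\mathbb{P}^n),$ donc g\'eom\'etriquement rationnelle et \emph{a fortiori} rationnellement connexe. L'hypoth\`ese \emph{\textsc{(Ab\'elienne-Scind\'ee)}} se v\'erifie en exploitant la structure de fibration en coniques des surfaces de Ch\^atelet: au-dessus d'un point $\theta\in\mathbb{P}^n$ de codimension $1,$ la fibre $X_\theta$ d\'eg\'en\`ere de fa\c{c}on contr\^ol\'ee, et les composantes irr\'eductibles de multiplicit\'e un se scindent au plus sur une extension quadratique (donc ab\'elienne) de $k(\theta),$ ce qui fournit la composante requise.

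Pour l'hypoth\`ese arithm\'etique sur les fibres au-dessus d'un ouvert non vide de $\mathbb{P}^n,$ on invoque le th\'eor\`eme classique de Colliot-Th\'el\`ene, Sansuc et Swinnerton-Dyer: sur toute surface de Ch\^atelet d\'efinie sur un corps de nombres, l'obstruction de Brauer-Manin est la seule au principe de Hasse et \`a l'approximation faible pour les points rationnels. Comme le th\'eor\`eme \ref{Pn-codim1} tel qu'\'enonc\'e demande strictement le principe de Hasse ou l'approximation faible, j'emploierais la variante qui autorise l'hypoth\`ese plus faible \og l'obstruction de Brauer-Manin est la seule\fg\mbox{ }sur les fibres, obtenue en contr\^olant le sous-groupe $B=Br(X_\eta)$ comme indiqu\'e dans les remarques \ref{rem-B} et \ref{rem-hilbertien}. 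On conclut ainsi que l'obstruction de Brauer-Manin est la seule au principe de Hasse et \`a l'approximation faible pour les z\'ero-cycles de degr\'e $1$ sur $X.$ Enfin, comme $X_\eta$ est rationnellement connexe, la proposition \ref{H-CH0} donne (H CH0) et permet de passer de l'approximation faible \`a l'approximation forte.

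L'obstacle principal me semble double: d'une part, la v\'erification soigneuse de \emph{\textsc{(Ab\'elienne-Scind\'ee)}} pour un fibr\'e en surfaces de Ch\^atelet, qui exige une analyse pr\'ecise des d\'eg\'en\'erescences aux points de codimension $1$ de la base et peut n\'ecessiter un mod\`ele birationnellement \'equivalent mieux adapt\'e; d'autre part, la justification formelle de la variante du th\'eor\`eme \ref{Pn-codim1} avec hypoth\`ese de type Brauer-Manin sur les fibres, qui doit combiner le contr\^ole du sous-groupe $B$ via la sp\'ecialisation (proposition \ref{Br-specialisation}) avec la r\'ecurrence sur $n$ d\'evelopp\'ee dans la preuve du th\'eor\`eme \ref{Pn-codim2}.
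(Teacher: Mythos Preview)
Your approach diverges from the paper's, and the paper's route is shorter and avoids precisely the two obstacles you identify at the end.

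The paper does not apply Th\'eor\`eme~\ref{Pn-codim1} to the fibration $X\to\mathbb{P}^n$ in surfaces de Ch\^atelet. Instead it observes that a Ch\^atelet surface, being given affinely by $y^2-a\,z^2=P(x)$, is itself a conic bundle over $\mathbb{P}^1_x$. Hence $X$ is birationally a conic bundle over $\mathbb{P}^n\times\mathbb{P}^1$ (in the coordinates $(t_1,\dots,t_n;x)$), and one simply invokes Corollaire~\ref{application-fibre-en-SB} (or equivalently Th\'eor\`eme~\ref{Pn-codim1} for conic bundles). Over that enlarged base the fibres above closed points are \emph{conics}, which satisfy the Hasse principle and weak approximation outright, so Th\'eor\`eme~\ref{Pn-codim1} applies verbatim with no need for a Brauer--Manin variant on the fibres; and \textsc{(Ab\'elienne-Scind\'ee)} for conic bundles has already been checked in the preceding corollary.

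By contrast, your direct route requires two ingredients the paper does not supply. First, Th\'eor\`eme~\ref{Pn-codim1} as stated demands that the fibres satisfy Hasse/approximation faible, not merely that Brauer--Manin is the only obstruction; the remarques~\ref{rem-B} and~\ref{rem-hilbertien} concern Th\'eor\`emes~\ref{thm10}, \ref{thm15}, \ref{Pn-codim2}, and it is not clear that the Wittenberg-style reduction used to prove~\ref{Pn-codim1} survives when one only has a Brauer--Manin hypothesis on the fibres. Second, verifying \textsc{(Ab\'elienne-Scind\'ee)} for arbitrary codimension-$1$ degenerations of Ch\^atelet surfaces (as opposed to conics) is a genuine case analysis that you have only sketched. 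Both points are completely bypassed by the paper's trick of raising the dimension of the base by one.
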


\begin{proof}
La fibre générique
$X_\eta$ est définie par une équation (affine) $y^2-a_{t_1,\ldots,t_n}z^2=P_{t_1,\ldots,t_n}(x)$ où
$a_{t_1,\ldots,t_n}\in k(t_1,\ldots,t_n)^*$ et $P_{t_1,\ldots,t_n}(x)\in k(t_1,\ldots,t_n)[x]$ est un polynôme.
La variété $X$ peut-être vue comme un fibré en coniques au-dessus de $\mathbb{P}^{n+1}$ via
les coordonnées $(t_1,\ldots,t_n;x)\in \mathbb{P}^{n}\times\mathbb{P}^1$ à équivalence birationnelle près.
Pour conclure, on applique \ref{Pn-codim1}.
\end{proof}

L'obstruction de Brauer-Manin pour les points rationnels sur certains fibrés en surfaces de Châtelet
au-dessus de $\mathbb{P}^n$ est discutée par Harari dans \cite{Harari2}, Proposition 4.2.1.
Dans \cite{Harari2}, on suppose que le lieu dégénéré $D$ (défini au début de la section \ref{Pn}) est de
codimension au moins $2,$ ici on n'a pas besoin de cette hypothèse.

Concernant les fibrés en surfaces de Châtelet au-dessus d'une courbe de genre positif, Poonen a trouvé
un tel fibré tel que l'obstruction de Brauer-Manin au principe de Hasse
n'est pas la seule pour les points rationnels, \cite{Poonen}.
Cependant, l'existence d'un zéro-cycle global de degré $1$ sur le solide de Poonen
a été montrée par Colliot-Thélène \cite{CTsurPoonen}.

\bigskip
\noindent\textbf{Fibrés en espaces homogènes}

Comme application, on considère la question proposée par Colliot-Thélène
à la fin de \cite{CT95}. Soit $X\To\mathbb{P}^1$ une fibration dont la fibre générique
est une compactification lisse d'un espace homogène d'un groupe algébrique linéaire connexe. On
demande si l'obstruction de Brauer-Manin est la seule au principe de Hasse/à l'approximation faible pour
les zéro-cycles de degré $1$ (resp. points rationnels) sur $X.$

\begin{prop}
Soit $\pi:X\To\mathbb{P}^1$ une fibration dont la fibre générique $X_\eta$
est une compactification lisse d'un espace homogène $Y$ d'un groupe algébrique réductif connexe $G$ sur $k(\mathbb{P}^1)$.
On suppose que toute fibre de $\pi$ contient une composante irréductible de multiplicité un qui est géométriquement intègre.
On fait une des hypothèses suivantes

(1) le stabilisateur d'un point géométrique de $Y$ est connexe,

(2) $G$ est un groupe simplement connexe, et le stabilisateur d'un point géométrique de $Y$ est abélien,

Alors l'obstruction de Brauer-Manin est la seule au principe de Hasse/à l'approximation forte
pour les zéro-cycles de degré $1$
(resp. au principe de Hasse/à l'approximation faible pour les points rationnels).
\end{prop}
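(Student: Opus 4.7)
Le plan est d'appliquer le théorème \ref{thm10} pour la partie concernant les zéro-cycles, et les théorèmes analogues de Harari (\cite[Théorème 4.2.1]{Harari}, \cite[Théorème 1]{Harari3}) pour la partie concernant les points rationnels. Les trois hypothèses de \ref{thm10} sont à vérifier: (a) toute fibre contient une composante irréductible de multiplicité un géométriquement intègre; (b) $Br(X_{\bar\eta})$ est fini et $Pic(X_{\bar\eta})$ est sans torsion; (c) sur un sous-ensemble hilbertien généralisé de $\mathbb{P}^1,$ l'obstruction de Brauer-Manin est la seule au principe de Hasse et à l'approximation faible pour les points rationnels sur la fibre. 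La condition (a) est directement dans l'énoncé. Pour (b), la fibre générique $X_\eta,$ étant une compactification lisse d'un espace homogène d'un groupe linéaire connexe, est géométriquement rationnellement connexe (d'ailleurs géométriquement unirationnelle, et même rationnelle après passage à une clôture algébrique), donc le lemme \ref{Pic-tors-Br-fini} fournit la finitude de $Br(X_{\bar\eta})$ et l'absence de torsion de $Pic(X_{\bar\eta}).$

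Pour vérifier (c), je procéderais par un argument d'étalement. Il existe un ouvert non vide $V\subset\mathbb{P}^1$ au-dessus duquel $G$ et $Y$ s'étendent en un schéma en groupes réductif $\mathcal{G}$ et un espace homogène $\mathcal{Y},$ de sorte que pour tout point fermé $\theta\in V,$ la fibre $X_\theta$ soit une compactification lisse de l'espace homogène $Y_\theta$ du groupe réductif connexe $G_\theta$ sur $k(\theta).$ Dans le cas (1), la connexité du stabilisateur géométrique est préservée par spécialisation sur un ouvert; dans le cas (2), la propriété d'être simplement connexe pour $G_\theta$ et le caractère abélien du stabilisateur géométrique sont également préservés sur un ouvert. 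Il reste à invoquer les théorèmes connus sur l'obstruction de Brauer-Manin pour les compactifications lisses des espaces homogènes sur un corps de nombres: dans le cas (1), le théorème de Borovoi \cite{Borovoi} (ou son extension par Borovoi/Colliot-Thélène/Demarche) montre que l'obstruction de Brauer-Manin est la seule au principe de Hasse et à l'approximation faible pour les points rationnels; dans le cas (2), le résultat analogue de Borovoi sur les espaces homogènes d'un groupe semi-simple simplement connexe à stabilisateur abélien s'applique.

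Une fois (a), (b), (c) établies, le théorème \ref{thm10}(i)(ii) donne que l'obstruction de Brauer-Manin est la seule au principe de Hasse et à l'approximation faible pour les zéro-cycles de degré $1$ sur $X.$ Pour passer à l'approximation forte, on utilise que $X_\eta$ est rationnellement connexe, ce qui entraîne par la proposition \ref{H-CH0} la condition (H CH0), et donc le théorème \ref{thm10}(iii) s'applique. Pour la partie sur les points rationnels, on applique directement \cite[Théorème 4.2.1]{Harari} combiné à \cite[Théorème 1]{Harari3}, qui sont la version pour points rationnels du théorème \ref{thm10} et dont les hypothèses sont vérifiées de la même façon.

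L'obstacle principal est le contrôle de la spécialisation: il faut s'assurer que les propriétés du stabilisateur géométrique (connexité dans le cas (1), caractère abélien et simple connexité de $G_\theta$ dans le cas (2)) se propagent à un ouvert non vide des points fermés $\theta$ de la base, puis vérifier que les résultats \og Brauer-Manin est la seule obstruction \fg{} pour les espaces homogènes cités s'appliquent à la fibre $X_\theta$ comme $k(\theta)$-variété. Ce dernier point est en principe standard mais demande d'être précis sur le cadre exact (en particulier sur l'hypothèse de réductivité et la préservation de la structure d'espace homogène par spécialisation), et c'est la seule étape réellement technique du plan.
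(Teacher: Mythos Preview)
Your approach is essentially identical to the paper's: verify the hypotheses of Theorem~\ref{thm10} (geometrically integral components of multiplicity one in every fibre, finiteness of $Br(X_{\bar\eta})$ and torsion-freeness of $Pic(X_{\bar\eta})$ via rational connectedness, and Brauer--Manin as the only obstruction on the fibres via Borovoi's theorem), then conclude. The paper's proof is terser --- it cites Borovoi \cite{Borovoi96}, Corollaire 2.5, and simply says $X_\eta$ is geometrically unirational (hence satisfies the hypotheses) --- whereas you spell out the spreading-out of $G$, $Y$, and the stabilizer conditions, and you also note that for the rational-points statement the relevant references are \cite[Th\'eor\`eme 4.2.1]{Harari} and \cite[Th\'eor\`eme 1]{Harari3} rather than the section-based Th\'eor\`eme 4.3.1; these are refinements, not departures.
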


\begin{proof}
En notant le résultat principal (Corollaire 2.5) de Borovoi \cite{Borovoi96}, l'assertion résulte du théorème
\ref{thm10} pour les zéro-cycles de degré $1$ (resp. théorème 4.3.1 de Harari \cite{Harari} pour les points rationnels).
En fait, la fibre générique $X_\eta$ est géométriquement unirationnelle, car le groupe réductif $G$
est une variété unirationnelle, elle satisfait alors toutes les hypothèses du théorème
\ref{thm10} (resp. théorème 4.3.1 de \cite{Harari}).
\end{proof}

\begin{rem}
L'hypothèse, que
toute fibre de la fibration contient une composante irréductible de multiplicité un qui est géométriquement intègre,
est assez forte. Si l'on suppose seulement que toute fibre de la fibration contient une composante irréductible
de multiplicité un, le problème devient beaucoup plus difficile, les fibres dégénérées entraînent
de grosses difficultés, \textit{cf.} \cite{CT98} pour connaître l'histoire concernant cette difficulté.
Par ailleurs, si l'on suppose que la fibre générique admet un zéro-cycle de degré $1$ (resp. un $k(\mathbb{P}^1)$-point),
l'existence des fibres dégénérées est permise, on a également une proposition analogue en appliquant le théorème \ref{thm15},
dans ce cas, seule l'approximation faible/forte est intéressante.
\end{rem}

\bigskip
\noindent\textbf{Hypersurfaces cubiques}

La méthode des fibrations appliquée aux problèmes arithmétiques sur les hypersurfaces
a été discutée par Harari dans \S 5.2 de \cite{Harari}.
En remplaçant les théorèmes 4.2.1 et 4.3.1 de \cite{Harari} par les théorèmes \ref{thm10} et \ref{thm15},
on a un analogue pour les zéro-cycles de degré $1$ de presque tous ces résultats.
On laisse le lecteur vérifier les détails: il faut vérifier l'irréductibilité géométrique
de la fibre au point infini $\infty$ de la base $\mathbb{P}^1$ quand on applique le théorème \ref{thm10}.
Par exemple, on a l'énoncé suivant.

\begin{prop}
Soit $X$ une hypersurface cubique lisse de dimension au moins $3.$
Si la conjecture que l'obstruction de Brauer-Manin est la seule au principe de Hasse (resp. à l'approximation faible)
pour les zéro-cycles de degré $1$ sur les surfaces cubiques lisses est vraie,
alors $X$ satisfait le principe de Hasse (resp. l'approximation faible)
pour les zéro-cycles de degré $1.$
\end{prop}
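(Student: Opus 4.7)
Le plan est de procéder par récurrence descendante sur la dimension $n=\dim X\geqslant 3,$ en fibrant un modèle birationnel de $X$ au-dessus de $\mathbb{P}^1$ par un pinceau général d'hyperplans, puis d'appliquer à chaque étape le Théorème \ref{thm10}.

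Plongeons $X\subset\mathbb{P}^{n+1}$ et fixons un sous-espace linéaire général $\Lambda\simeq\mathbb{P}^{n-1}\subset\mathbb{P}^{n+1}.$ La projection de centre $\Lambda$ fournit une application rationnelle $X\dashrightarrow\mathbb{P}^1;$ en éclatant $X$ le long du sous-schéma $X\cap\Lambda$ (lisse pour $\Lambda$ générique, par Bertini), on obtient une variété projective lisse $\widetilde{X}$ munie d'un morphisme $\pi:\widetilde{X}\To\mathbb{P}^1$ dont la fibre générique est une hypersurface cubique lisse de dimension $n-1$ sur $k(\mathbb{P}^1).$ Les variétés $X$ et $\widetilde{X}$ étant birationnellement équivalentes et toutes deux propres lisses, l'invariance birationnelle rappelée au début du \S\ref{Pn} permet de se ramener à démontrer la conclusion pour $\widetilde{X}.$

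Pour appliquer le Théorème \ref{thm10} à la fibration $\pi,$ on vérifie successivement ses hypothèses. La fibre générique étant une hypersurface cubique lisse de dimension $\geqslant2,$ elle est unirationnelle et en particulier rationnellement connexe; par le Lemme \ref{Pic-tors-Br-fini}, le groupe $Br(\widetilde{X}_{\bar{\eta}})$ est fini et $Pic(\widetilde{X}_{\bar{\eta}})$ est sans torsion; par la Proposition \ref{H-CH0}, la condition (H CH0) est satisfaite. Pour un pinceau suffisamment général, toutes les fibres de $\pi$ sont des hypersurfaces cubiques géométriquement intègres de $\mathbb{P}^n$ (la seule manière pour une cubique d'être géométriquement réductible étant de se décomposer en un hyperplan et une quadrique, ce qui impose une condition non générique sur le pinceau). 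Par récurrence, presque toute fibre fermée $\widetilde{X}_\theta$ est une hypersurface cubique lisse de dimension $n-1\geqslant2$ définie sur $k(\theta);$ on applique alors soit l'hypothèse de récurrence (si $n-1\geqslant3$), soit l'hypothèse de l'énoncé (la conjecture pour les surfaces cubiques lisses, si $n-1=2$), ce qui fournit la propriété arithmétique requise sur un ouvert non vide de $\mathbb{P}^1.$

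L'obstacle principal sera l'ajustement du pinceau pour garantir la géométrique intégrité de \emph{toutes} les fibres, y compris celle au-dessus du point $\infty\in\mathbb{P}^1,$ point que la remarque précédant la proposition met explicitement en avant; un argument de Bertini, combiné au choix d'un point $k$-rationnel $\infty$ tel que la fibre correspondante soit lisse (et donc géométriquement intègre), devrait suffire. Le Théorème \ref{thm10}(i) respectivement (ii), appliqué à $\pi,$ livre alors la conclusion souhaitée pour $\widetilde{X},$ et donc pour $X$ par invariance birationnelle.
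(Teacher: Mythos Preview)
Your approach is correct and coincides with the paper's sketch: fibre $X$ over $\mathbb{P}^1$ by a generic pencil of hyperplanes, apply Théorème~\ref{thm10}, and descend by induction on the dimension. Two points deserve to be made explicit.

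First, the conclusion of Théorème~\ref{thm10} is that the Brauer--Manin obstruction is the \emph{only} obstruction on $\widetilde{X}$ (hence on $X$), whereas the proposition asserts that $X$ satisfies the Hasse principle (resp.\ weak approximation) outright. To bridge this, you must invoke the fact that $Br(X)/Br(k)=0$ for a smooth cubic hypersurface of dimension $\geqslant 3$: by the Lefschetz hyperplane theorem $Pic(X_{\bar{k}})=\mathbb{Z}$ with trivial Galois action, so $H^1(k,Pic(X_{\bar{k}}))=0$, and $Br(X_{\bar{k}})=0$ since $X_{\bar{k}}$ is simply connected with torsion-free cohomology. Without this, Théorème~\ref{thm10} does not deliver the stated conclusion.

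Second, your argument that a general pencil has all fibres geometrically integral can be replaced by the cleaner observation that \emph{every} hyperplane section of a smooth cubic $X\subset\mathbb{P}^{n+1}$ of dimension $n\geqslant 3$ is geometrically integral. Indeed, if $X\cap H$ were reducible or non-reduced, it would contain a linear $\mathbb{P}^{n-1}$, hence $X$ would contain a $\mathbb{P}^{n-1}$; writing $X=\{x_0Q_0+x_1Q_1=0\}$ in suitable coordinates, the singular locus of $X$ on this $\mathbb{P}^{n-1}$ is $\{Q_0=Q_1=0\}$, which is non-empty in $\mathbb{P}^{n-1}$ as soon as $n\geqslant 3$, contradicting smoothness. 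This disposes of the fibre at $\infty$ with no genericity needed, which is precisely the verification the paper singles out.
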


\smallskip
\small
\noindent \textbf{Remerciements.}
Je tiens à remercier D. Harari
pour les nombreuses discussions que nous avons eues pendant la préparation de cet article, et pour son aide sur le français.
La section \ref{Pn} toute entière est inspirée par une conversation avec J.-L. Colliot-Thélène sur les fibrés
en coniques au-dessus du plan projectif, je tiens à le remercier vivement.
Enfin, je remercie J.-L. Colliot-Thélène, O. Wittenberg, B. Poonen
pour leurs commentaires et suggestions.
\normalsize


\bibliographystyle{plain}
\bibliography{mybib1}
\end{document}